\newcommand{\abs}[1]{\left\lvert {#1} \right\rvert}
\newcommand{\ang}[1]{\left\langle #1 \right\rangle}
\newcommand{\set}[1]{\left\{ #1 \right\}}
\numberwithin{equation}{section}
\theoremstyle{plain}
\newtheorem{thm}{Theorem}[section]
\newtheorem{lem}[thm]{Lemma}
\theoremstyle{definition}
\newtheorem{defn}[thm]{Definition}
\newtheorem{alg}[thm]{Algorithm}
\theoremstyle{remark}
\tikzset{
    bullet/.style={inner sep=.5mm,circle,fill},
    open bullet/.style={inner sep=.5mm,circle,fill=white,draw}
}
\tikzset{
    bullet/.style={circle,fill=#1,inner sep=2pt},
    circ/.style={circle,fill=white,draw=#1,inner sep=2pt},
    bullet/.default={black},
    circ/.default={black},
    id/.pic={
        \fill[xshift=3pt] (150:6pt) to[bend right=10] (0:3pt) to[bend right=10] (-150:6pt)--cycle;
    },
    iid/.pic={
        \fill[xshift=0pt] (150:6pt) to[bend right=10] (0:3pt) to[bend right=10] (-150:6pt)--cycle;
        \fill[xshift=6pt] (150:6pt) to[bend right=10] (0:3pt) to[bend right=10] (-150:6pt)--cycle;
    },
    iiid/.pic={
        \fill[xshift=-3pt] (150:6pt) to[bend right=10] (0:3pt) to[bend right=10] (-150:6pt)--cycle;
        \fill[xshift=3pt] (150:6pt) to[bend right=10] (0:3pt) to[bend right=10] (-150:6pt)--cycle;
        \fill[xshift=9pt] (150:6pt) to[bend right=10] (0:3pt) to[bend right=10] (-150:6pt)--cycle;
    },
    ivd/.pic={
        \fill[xshift=-6pt] (150:6pt) to[bend right=10] (0:3pt) to[bend right=10] (-150:6pt)--cycle;
        \fill[xshift=0pt] (150:6pt) to[bend right=10] (0:3pt) to[bend right=10] (-150:6pt)--cycle;
        \fill[xshift=6pt] (150:6pt) to[bend right=10] (0:3pt) to[bend right=10] (-150:6pt)--cycle;
        \fill[xshift=12pt] (150:6pt) to[bend right=10] (0:3pt) to[bend right=10] (-150:6pt)--cycle;
    },
    i/.pic={
        \draw (0,-3pt) -- (0,3pt);
    },
    ii/.pic={
        \foreach \x in {-1.5pt,1.5pt} {
            \draw (\x,-3pt) -- (\x,3pt);
        }
    },
    iii/.pic={
        \foreach \x in {-3pt,0,3pt} {
            \draw (\x,-3pt) -- (\x,3pt);
        }
    },
    iv/.pic={
        \foreach \x in {-4.5pt,-1.5pt,1.5pt,4.5pt} {
            \draw (\x,-3pt) -- (\x,3pt);
        }
    },
}
\pgfplotsset{compat=1.10}
\renewcommand{\d}{\text{d}}
\DeclareMathOperator{\tr}{tr}
\DeclareMathOperator{\arctanh}{arctanh}
\newcommand{\GL}[2][]{\text{GL}_{#1}(#2)}
\newcommand{\SL}[2][]{\text{SL}_{#1}(#2)}
\newcommand{\PSL}[2][]{\text{PSL}_{#1}(#2)}
\author[T.~McAdam]{Taylor McAdam}
\address{Department of Mathematics \& Statistics \\
Pomona College\\
610 N. College Avenue\\
Claremont, CA 91711}
\email{\href{mailto:taylor.mcadam@pomona.edu}{taylor.mcadam@pomona.edu}}
\author[X.Yu]{Xiaoxing Yu}
\email{\href{mailto:xyaa2021@mymail.pomona.edu}{xyaa2021@mymail.pomona.edu}}
\keywords{translation surface, Veech surface, square-tiled surface, origami, saddle connection, gap distribution, horocycle flow, dynamical systems, Poincar\'e section}
\subjclass[2000]{37D40, 14H55, 37A17, 32G15}
\title{Origami slope gaps and the Hall distribution}
\begin{document}

\begin{abstract}
In this paper we review the theory of slope gap distributions of translation surfaces and summarize the state-of-the-art for calculating slope gap distributions of Veech surfaces. We then derive the slope gap distribution of a particular 10-tile origami by considering the origami's return times to a Poincar\'e section under the horocycle flow on the moduli space associated with the origami. We show that the resulting distribution is not a sum of scaled Hall distributions, unlike all previously published origami slope gap distributions. More generally, this demonstrates that the slope gap distribution of a branched covering of a translation surface cannot necessarily be represented as a sum of scaled copies of the slope gap distribution of the base surface.
\end{abstract}

\maketitle

\tableofcontents


\section{Introduction}
\label{ch:introduction}

The theory of dynamical systems has been fruitfully applied to model a variety of physical, chemical, ecological, and sociological phenomena, and for this reason it is often considered an important pillar of applied mathematics. However, dynamical approaches can also be a powerful tool in tackling ``pure'' mathematical puzzles, including problems in number theory, analysis, and geometry.

In recent years, dynamical approaches have been applied with great success in the study of translation surfaces and their moduli spaces, a subject with connections to algebraic and complex geometry, Teichm{\"u}ller theory, and homogeneous dynamics, among other fields. In particular, dynamical systems prove a useful tool in computing the distribution of gaps between slopes of saddle connections on a subclass of highly symmetrical translation surfaces known as Veech surfaces (for precise definitions of these terms, see Section \ref{ch:preliminaries}). In much the same way that it is possible to model a real-world system by considering a flow on a lower-dimensional phase space, it is possible to reduce a question about slope gap distributions to analyzing a two-dimensional subspace of a three-dimensional dynamical system.

Slope gap distributions are rare examples of ``naturally'' occurring nonsmooth probability distributions, and are in general quite complicated. However, to our knowledge, until now the subclass of square-tiled surfaces within Veech surfaces have all had slope gap distributions of a simpler form. Namely, all slope gap distributions of square-tiled surfaces calculated previously have been finite sums of scaled copies of a distribution known as the Hall distribution. In this paper, we investigate whether this pattern holds in general for square-tiled surfaces. Using dynamical techniques developed and refined by previous authors (see \cite{athreya-cheung14, athreya-chaika-lelievre15, uyanik-work16, kumanduri-et-al24, alassal-et-al25}) we show that there exists a square-tiled surface whose slope gap distribution is not representable as a finite sum of scaled Hall distributions.\footnote{Since completing this paper, we have learned of unpublished work by Michael Beers and Gabriela Brown, who independently discovered examples of square-tiled surfaces whose slope gap distributions do not appear to be a sum of Hall distributions. Using code they developed, they also compute the slope gap distributions for a large number of square-tiled surfaces with small numbers of tiles. Their work is forthcoming \cite{beers-brown}. }

The paper is organized as follows. We begin in \S\,\ref{sec:prior-research} with a brief introduction to the history of research on slope gap distributions. This section uses more technical terms than elsewhere in this paper, and readers unfamiliar with the field may choose to return to it after reading the remainder of the paper. Next, \S\,\ref{ch:preliminaries} sets up the major definitions and results we use in the rest of the paper. In the following sections, we introduce in turn Kumanduri, Sanchez, and Wang's algorithm for finding slope gap distributions (\S\,\ref{ch:algorithm}) and the SUMRY 2023 research group's algorithm for finding winning vectors (\S\,\ref{ch:determining-winners}). Finally, we present our main calculations and results in \S\,\ref{ch:calculations}.

\subsection{Prior research}
\label{sec:prior-research}

There is a rich connection between sequences of number theoretic interest and geometric objects. The most relevant example to our work here is that of the Farey sequence. The Farey sequence $F_n$ of order $n$ ($n>1$) is the increasing sequence of fractions $a/b$ where $a$ and $b$ are coprime integers and $b\leq n$. In 1970, R.\ R.\ Hall, largely relying on standard arguments from calculus, showed that the distribution of gaps in $F_n$ as $n$ tends to infinity, suitably renormalized, is a piecewise smooth distribution now referred to as the Hall distribution \cite{hall70}.

As early as 1948, Richards recognized a connection between the Farey sequence and the slopes of closed geodesics on the torus in considering the problem of when two periodic waves overlap their pulses \cite{richards48}. In the 2000s, Boca, Cobeli, and Zaharescu developed work connecting the Farey sequence and Hall's distribution of their gaps to lattice points in the plane \cite{boca-cobeli-zaharescu00} and define a dynamical map to study certain sums of Farey fractions \cite{boca-cobeli-zaharescu01}. Boca and Zaharescu further went on to connect the Farey sequence to two-dimensional tori, and bring up an interesting application in the physical sciences: considering the scattering of molecules in a periodic Lorenz gas motivates looking at linear trajectories (specifically, closed geodesics) on the flat torus \cite{boca-zaharescu06}. Marklof and Str\"ombergsson also studied the periodic Lorenz gas in connection to the distribution of lattice points \cite{marklof-strombergsson10}, and their methods served as inspiration for much of the work on slope gap distributions of translation surfaces that would follow.

A separate stream of research was developing in parallel to these advances in understanding the Farey sequence. In the 1970s, roughly contemporaneous to Hall's paper deriving his famous distribution, Thurston \cite{thurston88} and Veech \cite{veech82, veech89} made pioneering advances in the areas of moduli spaces and the distribution of closed curves and geodesic trajectories on surfaces. One of the subclasses of surfaces studied by Thurston and Veech is the one which we treat here, namely, square-tiled surfaces, also called origamis, a term due to Lochak \cite{lochak05}. By the time Boca, Cobeli, and Zaharescu's work started to appear, these essentially geometric problems had been explicitly connected to number-theoretic problems about the statistical distributions of gaps. For example, Elkies and McMullen studied the distribution of gaps in the sequence $\{\sqrt{n}\}\mod1$ using ergodic theory on elliptic curves \cite{elkies-mcmullen04}.

Before gaps between saddle connections on translation surfaces had been extensively studied, much work was done to understand the distribution of saddle connections themselves. In the late 1980s, Masur showed that the number of saddle connections of bounded length grows quadratically with the length for any translation surface \cite{masur88, masur90}. Veech showed that there is an exact quadratic asymptotic for any Veech surface \cite{veech89}, and that the directions of saddle connections equidistribute in $S^1$ for any Veech surface as the length goes to infinity \cite{veech98}. Similarly, Eskin and Masur showed that there is an exact asymptotic for the number of saddle connections of bounded length on almost every surface in a stratum of translation surfaces \cite{eskin-masur01}, and Vorobets showed that the directions of saddle connections equidistribute in $S^1$ for almost every surface in a stratum \cite{vorobets96}. In 2019, Dozier showed that the uniform measure taken over all saddle connections of bounded length converges weakly to the area measure on the surface as the length tends to infinity \cite{dozier19}.

In 2012, using the methods of \cite{marklof-strombergsson10}, Athreya and Chaika turned their attention to the spacing between saddle connections. They showed that the distribution of renormalized gaps between saddle connection directions is the same for almost every surface in a stratum and that this distribution has support at zero, in contrast to the gap distribution of a Veech surface, which never has support at zero. In 2014, Athreya and Cheung \cite{athreya-cheung14} built on this work to re-derive Hall's distribution of Farey gaps in the framework of dynamical systems theory. In their paper, Athreya and Cheung consider the torus as a point in the modular surface $\SL[2]{\mathbb{R}}\mathbin{/}\SL[2]{\mathbb{Z}}$ and subject it to a continuous subgroup action of $\SL[2]{\mathbb{R}}$ known as the horocycle flow. Considering the horocycle flow as a dynamical system, Athreya and Cheung find for it a transversal and a return map, and show that parametrizing the transversal reduces the problem of finding the slope gap distribution to finding areas of subsets ``swept-out'' by level sets of the return time function. 

By framing the problem of slope gaps on the torus in terms of a dynamical system on the modular surface, Athreya and Cheung opened the door to generalizing the study of Farey fractions and the torus to investigating gap distributions of higher-genus translation surfaces by looking at the horocycle flow on the moduli space associated to a given surface. Athreya, Chaika, and Leli\`evre were the first to take take this approach for a translation surface other than the torus in \cite{athreya-chaika-lelievre15}, where they derived the gap distribution for the ``golden L" surface, and a similar method has been used in some form in all subsequent work on the topic. Athreya would later go on to refine and summarize this work in \cite{athreya16}. Uyanik and Work adapt the method of \cite{athreya-chaika-lelievre15} to calculate the slope gap distribution of the regular octagon surface \cite{uyanik-work16}, and they provide an algorithm for computing the slope gap distribution of an arbitrary Veech surface. Berman, McAdam, Miller-Murthy, Uyanik, and Wan use the same algorithm in \cite{berman-et-al21} to calculate the slope gap distributions of regular $2n$-gon translation surfaces. The authors of \cite{berman-et-al21} were able to show that for certain values of $n$, the $2n$-gon had bimodal slope gap distributions, a pattern which had not been observed previously.

Despite its usefulness, Uyanik and Work's algorithm could in certain instances fail to terminate in a finite number of steps. In 2024, Kumanduri, Sanchez, and Wang refined the algorithm by adjusting the parametrization of the transversal so that the slope gap distribution is always calculable, so long as the return time function can be determined at every point on the transversal \cite{kumanduri-et-al24}. Subsequently, Al Assal, Ali, Arengo, McAdam, Newman, Scully, and Zhou (the SUMRY 2023 research group) provided a systematic method for determining the return time function in most cases \cite{alassal-et-al25}. It is Kumanduri, Sanchez, and Wang's modified algorithm for parametrizing the transversal and the SUMRY 2023 research group's method for finding the return time function that we adapt in this paper to calculate the slope gap distributions of square-tiled surfaces.

Finally, we note that although we have summarized some of the literature on translation surfaces and gap distributions most relevant to the work we present in this paper, this list is not exhaustive. A curious reader may also be interested in the the following works: \cite{heersink16,taha18,taha19,sanchez22,work20,osman-southerland-wang25}. 

\subsection{Acknowledgments}

This work began as a thesis project at Pomona College, and the authors are grateful to Pomona College, and especially the Department of Mathematics and Statistics, for providing the infrastructure and support that made it possible. We would also like to thank Stephan Garcia for a close reading and helpful comments on an earlier draft of the manuscript. We are grateful to Aaron Calderon for first communicating the 
``sum of Halls" question to us. We would also like to thank the members of the 2023 Summer Undergraduate Mathematics Research at Yale (SUMRY) slope gap distribution project---namely, Fernando Al Assal, Nada Ali, Uma Arengo, Carson Newman, Noam Scully, and Sophia Zhou---for making their then-unpublished algorithm available to us. Finally, we are grateful to Jordan Grant and Jo O'Harrow of the SUMRY 2021 research group for providing code that we adapted to find generators for the maximal parabolic subgroups of the Veech group of a square-tiled surface.

\section{Preliminaries}
\label{ch:preliminaries}

\subsection{Translation surfaces}
\label{sec:translation-surfaces}

Let $\Delta_1$, $\Delta_2$ be two polygons in the plane with boundaries oriented counterclockwise, so that an observer traversing the boundary of each polygon according to its orientation sees the interior of the polygon to its left at all times in its journey. If we imagine the polygons as paper cut-outs lying on a flat table, we might wonder what shapes we can make by gluing the polygons together along their sides in a way that respects the orientations of their boundaries. This idea motivates the following definition.

\begin{defn}\label{def:gluing}
Let $\Delta_1$ and $\Delta_2$ be two polygons in the plane with counter\-clockwise-oriented boundaries. Let $s_1$ and $s_2$ be two parallel, same-length sides of $\Delta_1$ and $\Delta_2$, respectively, with opposite orientations. Then there exists a translation $L\colon\mathbb{R}^2\to\mathbb{R}^2$ satisfying $L(s_1)=s_2$.

Let $\sim$ be defined as follows. For any points $\mathbf{x}$, $\mathbf{y}\in s_1\cup s_2$, $\mathbf{x}\sim\mathbf{y}$ if and only if one or more of the following conditions are true:
\begin{itemize}
    \item $\mathbf{x}=\mathbf{y}$;
    \item $\mathbf{x}\in s_1$, $\mathbf{y}\in s_2$, and $L(\mathbf{x})=\mathbf{y}$;
    \item $\mathbf{x}\in s_2$, $\mathbf{y}\in s_1$, and $L(\mathbf{y})=\mathbf{x}$.
\end{itemize}
It is straightforward to check that $\sim$ is reflexive, symmetric, and transitive, so that it defines an equivalence relation on $s_1\cup s_2$. We refer to the natural projection mapping $s_1\cup s_2$ to the quotient space $(s_1\cup s_2)\mathbin{/}\sim$ as \textit{gluing} the segments $s_1$ and $s_2$ together \textit{by translation}, and say that the quotient space is \textit{glued by translation}. 
\end{defn}

Because we do not consider other kinds of gluing in this paper, we refer to gluing by translation as simply \textit{gluing}. In the simple case of gluing two segments $s_1$, $s_2$ together, it is possible to show that the quotient space $(s_1\cup s_2)\mathbin{/}\sim$ is homeomorphic to both $s_1$ and $s_2$. We may therefore say that gluing two segments together is identifying them as one and the same side.

Notice that $\sim$ extends naturally to all points in $\mathbb{R}^2$, which allows gluing together more complicated structures. The following definition is adapted from \cite{masur-06}.

\begin{defn}\label{def:polygonal-representation}
Let $\set{\Delta_1,\Delta_2,\dots,\Delta_n}$ be a finite collection of counter\-clock\-wise-oriented polygons in the plane with sides $\set{s_1,s_2,\dots,s_m}$ such that for every $1\leq i\leq m$, there corresponds to each $s_i$ a unique index $1\leq k\leq m$ selecting a side $s_k$ such that $s_i$ and $s_k$ are parallel, of equal length, and oppositely oriented. Let
\[\Delta=\Delta_1\cup\Delta_2\cup\dots\cup\Delta_n\,,\]
let every $s_i$ and $s_k$ be glued by (in general distinct) translations for $1\leq i\leq m$, and let the equivalence relation on $\Delta$ defined by these gluings be $\sim$. Then the pair $(\Delta,{\sim})$ is a \textit{polygonal representation}.
\end{defn}

Before proceeding, we illustrate this construction with an example. Let $\Delta_1$, $\Delta_2$ be counterclockwise-oriented heptagons and let their edges be as in the upper half of Figure~\ref{fig:general-translation-surface}. The total number of sides is $m=14$. For each $1\leq i\leq 7$, let $s_i$ be glued to $s_{i+7}$. It is easy to check that $s_i$ is oriented oppositely to $s_{i+7}$ for $1\leq i\leq 7$, so $\Delta_1\cup\Delta_2$ satisfies the conditions of Definition~\ref{def:polygonal-representation} and forms a polygonal representation with the appropriate gluing $\sim$.

By imagining an observer traversing this surface, we make some observations about Definition~\ref{def:polygonal-representation}. First, the positions of the $\Delta_i$ in the plane are irrelevant to the observer, who has no way to know its position in $\mathbb{R}^2$. In other words, the intrinsic geometry of the surface does not depend on the exact locations in the plane of its constituent polygons. Second, intrinsically speaking, there is no canonical decomposition of the surface into constituent polygons; distinct decompositions may yield the same intrinsic geometry. This is shown in the lower half of Figure~\ref{fig:general-translation-surface}, where the sides $s_1'$ and $s_8'$ are glued together. An observer moving rightward across $s_1'$ in the lower half of Figure~\ref{fig:general-translation-surface} would find itself emerging onto the same point as it would moving rightward in the corresponding location, not across any edge, in the upper half of Figure~\ref{fig:general-translation-surface}. Similarly, an observer moving rightward across $s_1$ in the upper half of Figure~\ref{fig:general-translation-surface} would emerge onto the same points as it would crawling across the dashed line in the lower half.

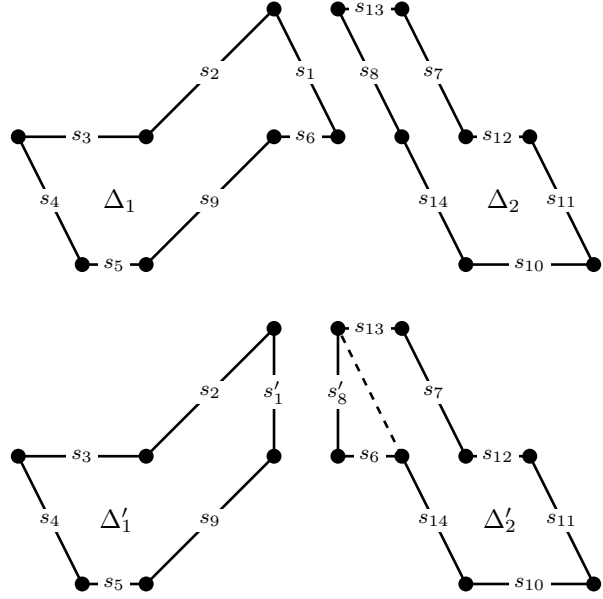
\begin{figure}
    \centering
    \tikzset{side label/.style={fill=white,inner sep=2pt,midway,font=\footnotesize}}
    \begin{tikzpicture}[scale=1.7,line width=1pt]
    \begin{scope}
        \draw (0,0) node[bullet] {}
        -- ++(-.5,1) node[bullet] {} node[side label] {$s_1$}
        -- ++(-1,-1) node[bullet] {} node[side label] {$s_2$}
        -- ++(-1,0) node[bullet] {} node[side label] {$s_3$}
        -- ++(.5,-1) node[bullet] {} node[side label] {$s_4$}
        -- ++(.5,0) node[bullet] {} node[side label] {$s_5$}
        -- ++(1,1) node[bullet] {} node[side label] {$s_9$}
        -- cycle node[side label] {$s_6$};

        \begin{scope}[xslant=-.5]
            \draw (.5,0) node[bullet] {}
            -- ++(0,1) node[bullet] {} node[side label] {$s_8$}
            -- ++(.5,0) node[bullet] {} node[side label] {$s_{13}$}
            -- ++(0,-1) node[bullet] {} node[side label] {$s_7$}
            -- ++(.5,0) node[bullet] {} node[side label] {$s_{12}$}
            -- ++(0,-1) node[bullet] {} node[side label] {$s_{11}$}
            -- ++(-1,0) node[bullet] {} node[side label] {$s_{10}$}
            -- cycle node[side label] {$s_{14}$};
        \end{scope}

        \node at (-1.75,-.5) {$\Delta\mathrlap{_1}$};
        \node at (1.25,-.5) {$\Delta\mathrlap{_2}$};
    \end{scope}

    \begin{scope}[yshift=-2.5cm]
        \draw (-.5,0) node[bullet] {}
        -- ++(0,1) node[bullet] {} node[side label] {$s_1'$}
        -- ++(-1,-1) node[bullet] {} node[side label] {$s_2$}
        -- ++(-1,0) node[bullet] {} node[side label] {$s_3$}
        -- ++(.5,-1) node[bullet] {} node[side label] {$s_4$}
        -- ++(.5,0) node[bullet] {} node[side label] {$s_5$}
        -- cycle node[side label] {$s_9$};

        \draw (.5,0) node[bullet] {}
        -- ++(-.5,0) node[bullet] {} node[side label] {$s_6$}
        -- ++(0,1) node[bullet] {} node[side label] {$s_8'$}
        -- ++(.5,0) node[bullet] {} node[side label] {$s_{13}$}
        -- ++(.5,-1) node[bullet] {} node[side label] {$s_7$}
        -- ++(.5,0) node[bullet] {} node[side label] {$s_{12}$}
        -- ++(.5,-1) node[bullet] {} node[side label] {$s_{11}$}
        -- ++(-1,0) node[bullet] {} node[side label] {$s_{10}$}
        -- cycle node[side label] {$s_{14}$};

        \draw[dashed] (.5,0) -- (0,1);

        \node at (-1.75,-.5) {$\Delta\mathrlap{_1}'$};
        \node at (1.25,-.5) {$\Delta\mathrlap{_2}'$};
    \end{scope}
    \end{tikzpicture}
    \caption{Two polygonal representations that cannot be distinguished by their intrinsic geometries.}
    \label{fig:general-translation-surface}
\end{figure}

To reiterate, distinct collections of polygons with distinct gluings can represent the same surface from the point of view of an observer on the surface. We make this notion precise with the following definition.

\begin{defn}
Let the finite collection of polygons $\set{\Delta_1,\Delta_2,\dots,\Delta_n}$ and the gluing $\sim$ define a polygonal representation. The following three operations are defined for any polygonal representation.
\begin{itemize}
    \item \textit{Cut} a polygon $\Delta_i$ by selecting two points on its boundary and adding a new side between the two points. Treating the new side as the union of two glued sides, it divides $\Delta_i$ into two new polygons.
    \item \textit{Translate} any polygon $\Delta_i$, ensuring its interior remains disjoint from the interior of $\Delta_k$ for $i\neq k$.
    \item \textit{Paste} any two polygons $\Delta_i$ and $\Delta_k$ together along two coinciding glued edges. This builds a new polygon out of the two old polygons and removes a side.
\end{itemize}
We refer to these operations collectively as \textit{cut-translate-paste} operations.
\end{defn}

Cut-translate-paste operations do not change the intrinsic geometry of a polygonal representation, so it is natural to define equivalence classes of polygonal representations related by sequences cut-translate-paste operations.

\begin{defn}
\label{def:translation-surface}
If two polygonal representations $(\Delta,\sim)$ and $(\Delta',{\sim}')$ are related by a (possibly empty) sequence of cut-translate-paste operations, that is, if applying a sequence of cut-translate-paste operations to $(\Delta,\sim)$ yields $(\Delta',{\sim}')$ or vice versa, we say they are equivalent. A cut-translate-paste equivalence class of polygonal representations is a \textit{translation surface}.
\end{defn}

It is straightforward to verify that the relation given in the definition is indeed an equivalence relation: its reflexivity follows from the fact that any polygonal representation is related to itself through an empty sequence of cut-translate paste operations, its symmetry follows from the fact that cut-translate-paste operations are invertible, and its transitivity follows from the fact that sequences of cut-translate-paste operations can be composed.

Now we can restate the equivalence of the two polygonal representations in Figure~\ref{fig:general-translation-surface} in terms of Definition~\ref{def:translation-surface}. Cutting $\Delta_1$ through the left vertices of $s_1$ and $s_6$ on the top polygonal representation, translating the given chunk right so that $s_1$ coincides with $s_8$, and gluing $s_1$ and $s_8$ together yields the bottom polygonal representation.

\subsubsection{Translation surfaces as Riemann surfaces}
\label{subsec:riemann-surfaces}

There is an equivalent definition of translation surfaces as a pair $(X,\omega)$ where $X$ is a Riemann surface and $\omega$ is a holomorphic $1$-form on $X$ \citep[\S\,2.5]{athreya-masur-24}. We do not say more about this construction, but mention it because it is standard in the field to notate translation surfaces $(X,\omega)$, a convention we follow in the rest of this paper. The Riemann surface notation is convenient because it is independent of any particular polygonal representation. This permits a cleaner exposition of how linear transformations on the polygonal representations of translation surfaces induce linear transformations on the underlying translation surfaces themselves.

A reader interested in learning more about the rich theory of translation surfaces and their connection to Riemann surfaces is encouraged to explore the following excellent resources: \cite{zorich06, hubert-schmidt06, wright15, athreya-masur-24}. 

\subsubsection{Linear transformations of translation surfaces}
\label{subsec:linear-transformations}

Linear transformations of the plane naturally induce transformations on polygonal representations. The following definition makes this precise.

\begin{defn}
Let $A\colon\mathbb{R}^2\to\mathbb{R}^2$ be a linear transformation and let $(\Delta,\sim)$ be a polygonal representation with sides $\set{s_i}_{i=1}^n$. Then $A$ induces a linear transformation of $(\Delta,\sim)$ by defining
\[A((\Delta,\sim))=(\Delta',\sim')\,,\]
where $\Delta'=A(\Delta)$ and $\sim'$ glues $A(s_i)$ and $A(s_k)$\footnote{Since linear transformations send parallel lines to parallel lines, these sides can still be glued together using Definition~\ref{def:gluing}.} together if and only if $\sim$ glues $s_i$ and $s_k$ together.
\end{defn}

Linear transformations of polygonal representations in turn induce linear transformations on translation surfaces. This is because if two polygonal representations $(\Delta_1,\sim_1)$ and $(\Delta_2,\sim_2)$ are related by a sequence of cut-translate-paste operations, their images $A((\Delta_1,\sim_1))$ and $A((\Delta_2,\sim_2))$ are also related by a sequence of cut-translate-paste operations. In place of a full proof, we briefly sketch the argument here. Given a sequence of cut-translate-paste operations mapping $(\Delta_1,\sim_1)$ to $(\Delta_2,\sim_2)$, construct a new cut-translate-paste sequence as follows: for every cut between points $p$ and $q$ in the original sequence, insert a cut between points $A(p)$ and $A(q)$ in the new sequence; for every translation through a vector $\mathbf{t}$ in the original sequence, insert a translation through a vector $A\mathbf{t}$ in the new sequence; for every paste between two sides $r$ and $s$ in the original sequence, insert a paste between the two sides $A(r)$ and $A(s)$ in the new sequence. Then the new sequence maps $A((\Delta_1,\sim_1))$ to $A((\Delta_2,\sim_2))$. Hence a linear transformation on a translation surface is well-defined.

Certain subsets of linear transformations on translation surfaces can be endowed with algebraic structure. In particular, the group of invertible $2\times2$ real matrices $\GL[2]{\mathbb{R}}$ defines a group action on the set of translation surfaces. Since a linear transformation may map a polygonal representation to a distinct polygonal representation of the same translation surface, in general translation surfaces have nontrivial stabilizers under this action. Any linear transformation whose action maps a translation surface to itself must preserve volume and orientation. The stabilizer of a translation surface $(X,\omega)$ in $\GL[2]{\mathbb{R}}$ is therefore a subgroup of $\SL[2]{\mathbb{R}}$, the group of unit-determinant $2\times2$ real matrices. This motivates the notation $\SL{X,\omega}$ for the stabilizer of $(X,\omega)$ in $\GL[2]{\mathbb{R}}$.

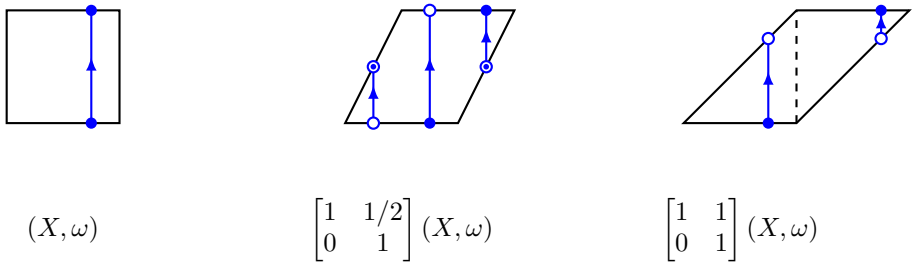
\begin{figure}
    \centering
    \begin{tikzpicture}[scale=1.5,thick]
        \draw (0,0) rectangle +(1,1);
        \draw (3,0) -- +(1,0) -- +(1.5,1) -- +(0.5,1) -- cycle;
        \draw (6,0) -- +(1,0) -- +(2,1) -- +(1,1) -- cycle;

        \node[anchor=base] at (.5,-1) {$(X,\omega)$};
        \node[anchor=base] at (3.5,-1) {$\begin{bmatrix}1 & 1/2 \\ 0 & 1\end{bmatrix}(X,\omega)$};
        \node[anchor=base] at (6.5,-1) {$\begin{bmatrix}1 & 1 \\ 0 & 1\end{bmatrix}(X,\omega)$};

        \draw[blue] (.75,0) node[bullet=blue,inner sep=1.5pt] {} -- pic[midway,sloped,scale={2/3}] {id} +(0,1) node[bullet=blue,inner sep=1.5pt] {};
        \draw[blue] (3.75,0) node[bullet=blue,inner sep=1.5pt] {} -- pic[midway,sloped,scale={2/3}] {id} +(0,1) node[circ=blue,inner sep=1.5pt] {};
        \draw[blue] (3.25,0) node[circ=blue,inner sep=1.5pt] {} -- pic[midway,sloped,scale={2/3}] {id} +(0,.5) node[circ=blue,inner sep=1.5pt] {} node[bullet=blue,inner sep=.75pt] {};
        \draw[blue] (4.25,0.5) node[circ=blue,inner sep=1.5pt] {} node[bullet=blue,inner sep=.75pt] {} -- pic[midway,sloped,scale={2/3}] {id} +(0,.5) node[bullet=blue,inner sep=1.5pt] {};
        \draw[blue] (6.75,0) node[bullet=blue,inner sep=1.5pt] {} -- pic[midway,sloped,scale={2/3}] {id} +(0,.75) node[circ=blue,inner sep=1.5pt] {};
        \draw[blue] (7.75,.75) node[circ=blue,inner sep=1.5pt] {} -- pic[midway,sloped,scale={2/3}] {id} +(0,.25) node[bullet=blue,inner sep=1.5pt] {};

        \draw[dashed] (7,0) -- +(0,1);
    \end{tikzpicture}
    \caption{A horizontal shear by $1$ preserves $(X,\omega)$, while a horizontal shear by $1/2$ does not. Identified points are marked with the same symbol.}
    \label{fig:stabilizing-transformations}
\end{figure}

Preserving volume and orientation is in general a necessary condition but not a sufficient one. Consider the translation surface $(X,\omega)$ consisting of a unit square with opposite edges identified, shown in Figure~\ref{fig:stabilizing-transformations}. Any shear of $(X,\omega)$ preserves its volume and orientation. However, a horizontal shear by $1/2$ does not map $(X,\omega)$ to itself. To see this, notice that $(X,\omega)$ contains a vertical loop of length one, indicated by the blue arrow on the leftmost diagram of Figure~\ref{fig:stabilizing-transformations}. However, the image of $(X,\omega)$ under a shear of $1/2$ does not contain any such loop. Any vertical loop of length $1$ must intersect the sheared surface's lower edge $[0,1]\times0$ at exactly one point, say $(x,0)$. If $x\geq1/2$, then a vertical path of length $1$ passing through this point must end at the point $(x,1)\sim(x-1/2,0)$. If $x\leq1/2$, then a vertical path of length $1$ passing through this point must cross the point $(x,2x)\sim(x+1,2x)$ and end at the point $(x+1,1)\sim(x+1/2,0)$. Since $(x\pm1/2,0)\not\sim(x,0)$ for any $x\in[0,1]$, the vertical path is not a loop in either case. It follows that no vertical loop of length $1$ exists on the sheared surface.

In contrast, a shear by $1$ does map $(X,\omega)$ to itself. Cutting along the dashed line on the right of Figure~\ref{fig:stabilizing-transformations}, translating the triangle containing $(2,1)$ left of the triangle containing $(0,0)$, and pasting the two along the diagonal side recovers $(X,\omega)$. It is also possible to verify that any vertical path of length $1$ forms a loop on this surface.

\subsection{Veech surfaces and Veech groups}
\label{sec:veech-surfaces-groups}

\subsubsection{The hyperbolic upper half-plane}
\label{subsec:hyperbolic-plane}

\begin{defn}
Let $\mathbb{H}^2=\set{x+iy\mid x\,,y\in\mathbb{R}\,,y>0}$ be the upper half of the complex plane endowed with the metric defined by
\[\d{s}^2=\frac{\d{x}^2+\d{y}^2}{y^2}\]
and the ensuing induced volume
\[\d{\mu}=\frac{\d{x}\,\d{y}}{y^2}\,.\]
The resulting metric space is called the \textit{Poincar{\'e} upper half-plane} or the \textit{hyperbolic upper half-plane}.

Also, let $\overline{\mathbb{H}^2}=\set{x+iy\mid x\,,y\in\mathbb{R}\,,y\geq0}\cup\{\infty\}$ be the metric compactification of $\mathbb{H}^2$.
\end{defn}

The group of unit-determinant $2\times2$ real matrices $\SL[2]{\mathbb{R}}$ induces a group action on $\mathbb{H}^2$ through linear-fractional transformations, i.e.\ \textit{M\"obius transformations}. In particular, for any
\[A=\begin{bmatrix}
    a & b \\ c & d
\end{bmatrix}\in\SL[2]{\mathbb{R}}\]
and any $z\in\mathbb{H}^2$, define the action of $A$ on $z$ to be
\[
A\cdot z=\frac{az+b}{cz+d}\,.
\]
It is tedious but straightforward to check that the above equation defines a group action which preserves the metric and the volume element. Since all elements of $\SL[2]{\mathbb{R}}$ have positive determinant $1$, it is a group of orientation-preserving isometries on $\mathbb{H}^2$. The action of $\SL[2]{\mathbb{R}}$ extends continuously to $\overline{\mathbb{H}^2}$.

The action of $\SL[2]{\mathbb{R}}$ on $\overline{\mathbb{H}^2}$ has kernel $\set{I,-I}$, where $I$ is the $2\times2$ identity matrix. This implies that elements of $\SL[2]{\mathbb{R}}$ and linear-fractional transformations of $\overline{\mathbb{H}^2}$ exist in $2$-to-$1$ correspondence: changing the sign of all entries of an element of $\SL[2]{\mathbb{R}}$ does not change its action on $\overline{\mathbb{H}^2}$. We write the quotient group $\SL[2]{\mathbb{R}}\mathbin{/}\set{I,-I}$ as $\PSL[2]{\mathbb{R}}$.

For any discrete subgroup $H\leq\SL[2]{\mathbb{R}}$, it is possible to find a connected subset of $\mathbb{H}^2$ which contains exactly one representative from each orbit under the action of $H$, except possibly on a set of measure zero. Such a subset is called a \textit{fundamental domain} for $H$. A translation surface $(X,\omega)$ whose stabilizer $\SL{X,\omega}$ has a finite-volume fundamental domain is called a \textit{Veech surface}, and $\SL{X,\omega}$ is called its \textit{Veech group}. Informally, Veech surfaces are translation surfaces with a high degree of symmetry---so much so that the group of transformations which fix them is ``big'' enough that every point in $\mathbb{H}^2$ can be carried into a finite-volume subset by transformations in the group.

\subsubsection{The modular group}
\label{subsec:modular-group}

A natural example of a discrete subgroup of $\SL[2]{\mathbb{R}}$ is the multiplicative group $\SL[2]{\mathbb{Z}}$ of $2\times2$ matrices with unit determinant, all of whose entries are integers. It is known that $\SL[2]{\mathbb{Z}}$ is generated by only two elements,
\[S=\begin{bNiceMatrix}[r]
    0 & -1 \\ 1 & 0
\end{bNiceMatrix}\quad\text{and}\quad T=\begin{bmatrix}
    1 & 1 \\ 0 & 1
\end{bmatrix}\,.\]
In terms of their action on $\mathbb{R}^2$, $S$ defines a counterclockwise rotation through a quarter turn, while $T$ defines a horizontal shear one unit to the right. The action of the modular group on the plane restricts naturally to translation surfaces.

The generators $S$ and $T$ define the linear-fractional transformations
\[S\cdot z=-\frac{1}{z}\quad\text{and}\quad T\cdot z=z+1\]
on $\mathbb{H}^2$. Geometrically, $S$ describes an inversion through the unit circle followed by a reflection across the imaginary axis, while $T$ describes a translation one unit to the right. We use this geometric interpretation to find a fundamental domain $\mathcal{F}$ for $\SL[2]{\mathbb{Z}}$. By successive applications of $T$ and its inverse, any point in $\mathbb{H}^2$ can be translated to lie in any infinite vertical strip of width $1$. The standard choice is the strip centered at $0$,
\[\set{z\in\mathbb{H}^2\,\middle\vert\,-\frac12<\Re(z)\leq\frac12}\,.\]
Next, any point lying in the interior of the unit circle can be inverted and reflected to the exterior by an application of $S$.\footnote{The matrix $S$ maps the points on the left half of the unit circle to the right half, so a measure-zero set of points in $\mathcal{F}$ have two representatives.} This defines the fundamental domain
\[\mathcal{F}=\set{z\in\mathbb{H}^2\,\middle\vert\,-\frac12<\Re(z)\leq\frac12\,,\abs{z}\geq1}\,,\]
shown in Figure~\ref{fig:fundamental-domain}. As required, the orbit of $\mathcal{F}$ under the action of $\SL[2]{\mathbb{Z}}$ is all of $\mathbb{H}^2$.

\begin{figure}
    \centering
    \begin{tikzpicture}[scale=1.35,font=\scriptsize]
        \clip (-4,0) rectangle (4,2.5);
    
        \fill[red!10] ({-1/2},2.5) -- ({-1/2},{sqrt(3)/2}) arc[radius=1,start angle=120,delta angle=-60] -- ({1/2},2.5);
        \fill[blue!10] ({1/2},2.5) -- ({1/2},{sqrt(3)/2}) arc[radius=1,start angle=120,delta angle=-60] -- ({3/2},2.5);
        \fill[green!10] ({-1/2},{sqrt(3)/2}) arc[radius=1,start angle=120,delta angle=-60] arc[radius=1,start angle=120,delta angle=60] arc[radius=1,start angle=0,delta angle=60];
        
        \foreach \x in {-4,-3,-2,-1,0,1,2,3,4}{
            \draw ({\x+1/2},0) -- +(0,2.5);
            \draw ({\x-1},0) arc[radius=1,start angle=180,delta angle=-180];
            \draw ({\x},0) arc[start angle=0,delta angle=180,radius={1/3}];
            \draw ({\x},0) arc[start angle=180,delta angle=-180,radius={1/3}];
        }
        
        \draw (-5,0) -- (5,0);

        \node at (0,1.5) {$\mathcal{F}$};
        \node at (0,0.8) {$S\cdot\mathcal{F}$};
        \node at (1,1.5) {$T\cdot\mathcal{F}$};
    \end{tikzpicture}
    \caption{The fundamental domain of the action of $\SL[2]{\mathbb{Z}}$ on $\mathbb{H}^2$ and its images under the actions of $S$ and $T$.}
    \label{fig:fundamental-domain}
\end{figure}
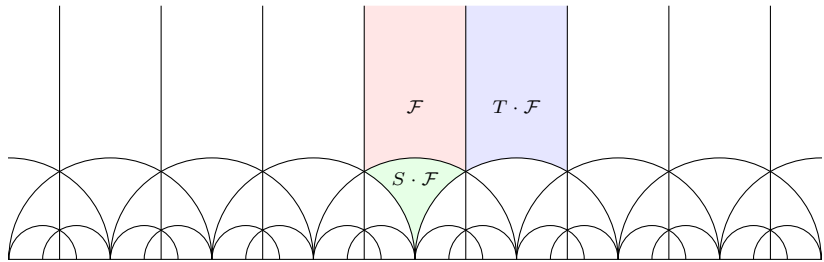

\subsection{Cone points and saddle connections}
\label{sec:cone-points-saddle-connections}

The definition of translation surfaces in terms of polygonal representations ensures that they are flat on the interiors of any constituent polygons as well as on any glued sides. In general, however, translation surfaces contain a finite set of points which do not possess locally flat neighborhoods. As an example, consider the translation surface formed by an octagon with opposite sides glued, shown in Figure~\ref{fig:octagon-cone-pts}. By tracking side identifications, we see that the octagon has just one vertex with eight representatives in the plane, labeled $\bullet$. Each interior angle of the polygon around $\bullet$ has a measure of three eighth-turns, so an observer traversing a small circle around $\bullet$ would make three full turns before returning to its starting location. Informally, the space is locally larger around $\bullet$ than anywhere else on the translation surface.\footnote{In fact, the extra space derives from the nonzero curvature of all neighborhoods of $\bullet$.} The following definitions formalize this notion.

\begin{figure}
    \centering
    \begin{tikzpicture}[scale=1.5]
        \foreach \n in {0,1,2,3,4,5,6,7} {
            \pgfmathsetmacro{\x}{cos(\n*45)}%
            \pgfmathsetmacro{\y}{sin(\n*45)}%
            \draw (\x,\y) -- ({cos((\n+1)*45)},{sin((\n+1)*45)}) node[bullet] {};
            
            \begin{scope}[on background layer]
                \pgfmathsetmacro{\r}{.15}%
                \draw[blue,rotate around={{\n*45}:(\x,\y)},fill=blue!10] ({\x+\r*cos(112.5)},{\y+\r*sin(112.5)}) arc[radius=\r,start angle=112.5,delta angle=135] -- (\x,\y) -- cycle;
            \end{scope}
        }
    \end{tikzpicture}
    \caption{An octagon with opposite sides glued forms a translation surface with a single vertex $\bullet$.}
    \label{fig:octagon-cone-pts}
\end{figure}
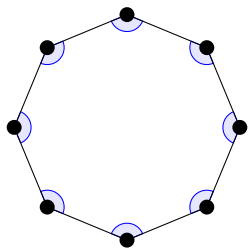

\begin{defn}
Let $(X,\omega)$ be a translation surface with a polygonal representation $(\Delta,\sim)$. Let
\[\Delta=\Delta_1\cup\Delta_2\cup\dots\cup\Delta_n\,,\]
where each $\Delta_i$ is a polygon. Assume without loss of generality that the $\Delta_i$ are all disjoint (if the $\Delta_i$ are not all disjoint, they can be translated to be so). Let $\pi\colon\Delta\to X$ be the natural projection associated to the quotient space $\Delta\mathbin{/}{\sim}=X$. Then each vertex $q$ of each $\Delta_i$ has an interior angle, which we denote $\theta(q)$. For any vertex $p\in X$, let $\set{q_k}_{k=1}^m$ be the preimage of $p$ under $\pi$ and let
\[\theta(p)=\sum_{k=1}^m\theta(q_k)\,.\]
We say $\theta(p)$ is the \textit{angle} of $p$. If $\theta(p)$ measures more than a full turn, we say $p$ is a \textit{cone point}\footnote{The term ``cone point'' comes from the fact that neighborhoods of a cone point are homeomorphic to neighborhoods of the apex of a Euclidean cone \citep[cf.][\S\S\,17.1--17.2]{schwartz11}}.
\end{defn}

In the octagon example above, $X$ contains the unique vertex $\bullet$ whose preimage under the projection contains eight points. The angle at each of these points is three-eighths of a turn, so the total measure of $\theta(p)$ is three turns. It turns out that the angle of a cone point does not depend on the polygonal representation and is always a whole number of turns. The following theorem proves this fact.

\begin{thm}
Let $p$ be a point of a translation surface $(X,\omega)$. Then the following is true:
\begin{enumerate}[label={(\arabic*)}]
    \item $\theta(p)$ is independent of the polygonal representation of $(X,\omega)$;
    \item $\theta(p)$ is a whole number of turns.
\end{enumerate}
\end{thm}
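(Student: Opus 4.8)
The plan is to treat the two assertions in turn, in both cases via the concrete description of $\theta(p)$ as a sum of sector angles at the preimages $q_1,\dots,q_m$ of $p$.

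For (1), I would check that $\theta(p)$ survives each of the three cut-translate-paste operations, since by Definition~\ref{def:translation-surface} any two polygonal representations of $(X,\omega)$ are joined by a finite sequence of them. A translation moves a polygon rigidly, preserving every interior angle and every gluing, so it changes no $\theta(q_k)$ and hence fixes $\theta(p)$. A cut inserts a chord and therefore alters angles only at the chord's two endpoints: if an endpoint is a vertex mapping to $p$, the chord divides its interior angle into the two interior angles of the two new polygons, whose sum is the original $\theta(q_k)$, so the contribution to $\theta(p)$ is unchanged; an endpoint lying in the relative interior of a side becomes a new vertex mapping to a regular point of $X$, which is never a vertex $p$ and so never enters the sum. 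Since pasting is the inverse of cutting, it is handled identically, and (1) follows.

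For (2), I would work inside a single small neighborhood of $p$. Fixing $\epsilon>0$ so small that the $\epsilon$-balls about $q_1,\dots,q_m$ are disjoint and meet only the sides incident to each $q_k$, the ball about $q_k$ is a circular sector of angle $\theta(q_k)$ whose two radial edges lie along the two incident sides. The side-gluings identify these radial edges in pairs, and the crux is to show they chain the sectors together compatibly: tracking boundary orientations, the radial edge along the \emph{outgoing} side at $q_k$ is glued to the radial edge along the \emph{incoming} side at some $q_{k'}$. Because every gluing is a translation, it preserves planar directions, and because glued sides carry opposite boundary orientations, the outward direction of the edge leaving $q_k$ coincides with the outward direction of the edge entering $q_{k'}$. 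Thus the outward radial direction, which the translation structure makes well-defined in $\mathbb{R}/2\pi\mathbb{Z}$, varies continuously as one sweeps across successive sectors, gaining exactly $\theta(q_k)$ along the $k$-th arc and jumping by nothing at each gluing.

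Finitely many sectors then close up into a cyclic chain sweeping out a punctured neighborhood of $p$; as that neighborhood is connected it is a single cycle (and in any event each cycle alone would contribute a multiple of a full turn). Returning to the initial radial edge after one circuit forces the total change in direction to vanish in $\mathbb{R}/2\pi\mathbb{Z}$, and since that total change is $\sum_{k=1}^m\theta(q_k)=\theta(p)$, we conclude that $\theta(p)$ is a whole number of turns. I expect the main obstacle to be precisely the orientation bookkeeping in (2)---verifying that the opposite-orientation condition on glued sides is exactly what makes the outgoing and incoming radial directions match, so that no rotation accumulates at the gluings; once that is pinned down, the triviality of the ``holonomy'' of the direction field does the rest.
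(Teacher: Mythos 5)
Your proposal is correct and takes essentially the same approach as the paper: part (1) by verifying that each cut, translation, and paste preserves $\theta(p)$ via the same additive splitting and merging of interior angles at preimages of $p$, and part (2) by traversing the cyclic chain of sectors around $p$ and using that translation gluings preserve planar directions, so the accumulated rotation $\theta(p)$ must close up to a whole number of turns. Your sector-and-direction-field phrasing of (2) (including the helpful parenthetical that each cycle alone would contribute a full number of turns) is a geometric repackaging of the paper's combinatorial relabeling of the sides around $p$, with the same key mechanism.
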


\begin{proof}
Let $\Delta=\Delta_1\cup\dots\cup\Delta_n$, let $(\Delta,\sim)$ be a polygonal representation of $(X,\omega)$, and let $p$ be a point of $X$ with preimage $\set{q_k}_{k=1}^m$ under the natural projection from $\Delta$ to $\Delta\mathbin{/}{\sim}$. Since any two equivalent polygonal representations are related by a sequence of cut-translate-paste operations, to prove (1) it suffices to show that no cut-translate-paste operation changes $\theta(p)$. We assume throughout, without loss of generality, that the $\Delta_i$ are all disjoint.

First, note that in the representation $(\Delta,\sim)$, $\theta$ is given by
\[\theta(p)=\theta(q_1)+\dots+\theta(q_m)\,.\]
Suppose we cut the polygon $\Delta_i$ and translate the resulting pieces into two disjoint polygons $\Delta_{i'}$ and $\Delta_{i''}$. For each point $q_k$ that the cut intersects, two new preimages of $p$ are created, $q_k'$ and $q_k''$. Since the cut does not add or remove any angle, the angles of $q_k'$ and $q_k''$ sum to the angle of $q_k$: symbolically, $\theta(q_k)=\theta(q_k')+\theta(q_k'')$. Translating any polygon $\Delta_i$ preserves its interior angles and so does not change any of the $\theta(q_k)$. Finally, any two vertices $q_k'$ and $q_k''$ glued together by a paste merge into the point $q_k$. Because the paste does not add or remove any angle, $\theta(q_k')+\theta(q_k'')=\theta(q_k)$. Hence no cut-translate-paste operation changes the value of $\theta(p)$, proving (1).

Next, notice that each vertex $q\in\Delta$ is attached to exactly two sides, which make an angle $\theta(q)$. If $\theta(q)$ is measured counterclockwise, call the side it is measured from the \textit{clockwise side} attached to $q$ and the side it is measured to the \textit{counterclockwise side} attached to $q$. Note that clockwise sides are always glued to counterclockwise sides. Relabel the vertices and sides as follows: choose a vertex $q_1$, and let the clockwise side attached to it be $s_1$ and the counterclockwise side attached to it be $s_2$. Then for each side $2\leq k\leq m$, let the side glued to $s_k$ be $s_k'$, the side immediately counterclockwise of $s_k'$ be $s_{k+1}$, and the vertex between $s_k'$ and $s_{k+1}$ be $q_k$. Notice that for $1\leq k\leq m-1$, the angle between $s_k'$ and $s_{k+1}$ is $\theta(q_k)$. Moreover, $s_1$ is the side lying immediately counterclockwise of $s_m$, and the angle between them is $\theta(q_m)$. Now $\theta(p)$ is the sum of all the $\theta(q_k)$ and is also the angle (modulo a full turn) between $s_1$ and itself. But the angle between $s_1$ and itself is $0$, so $\theta(p)$ must be a whole number of turns, proving (2).
\end{proof}

The above theorem shows that cone points and their angles form part of the intrinsic geometry of a translation surface.\footnote{The space of all translation surfaces naturally splits into distinct \textit{strata} based on the number of cone points and their respective angles. Strata are fundamental to the study of translation surfaces, as they are the natural domains over which many results apply. We direct interested readers to \cite{zorich06, hubert-schmidt06, wright15, athreya-masur-24}.} Outside of the cone points, we can obtain a notion of distance on translation surfaces by inheriting a flat metric from $\mathbb{R}^2$. This allows us to consider another feature of the intrinsic geometry of translation surfaces: \textit{geodesics}. A geodesic is a path which locally minimizes distance; an observer moving on a translation surface at constant velocity traces out a geodesic. Since translation surfaces are flat everywhere except at their cone points, a constant-velocity observer on a translation surface who does not encounter a cone point traces out a straight line. It follows that straight lines which do not intersect cone points are geodesics on translation surfaces. Another kind of geodesic begins and ends at (not necessarily distinct) cone points and these are called \textit{saddle connections}. Here is a straightforward way of defining saddle connections.

\begin{defn}
Let $(X,\omega)$ be a translation surface. If $p$ and $q$ are (not necessarily distinct) cone points of $(X,\omega)$ and $\gamma$ is a nontrivial straight-line path connecting $p$ and $q$ and containing no other cone points, we say $\gamma$ is a \textit{saddle connection} on $(X,\omega)$.
\end{defn}

\begin{figure}
    \centering
    \begin{tikzpicture}[scale=1.5,thick]
        \draw[blue] ({-sqrt(3)/2},{-1/2}) -- ({3*sqrt(3)/2},{1/2});
        \draw[blue,dashed] (0,-1) -- ({-sqrt(3)/2},{1/2});
        \draw[blue,dotted] ({sqrt(3)/2},{-1/2}) -- ({3*sqrt(3)/2},{-1/2});
        
        \foreach \n in {0,1,2,3,4,5} {
            \begin{scope}[on background layer]
                \draw ({cos(60*\n+30)},{sin(60*\n+30)}) -- ({cos(60*\n+90)},{sin(60*\n+90)});
                \draw ({sqrt(3)},0) +({-cos(60*\n+30)},{sin(60*\n+30)}) -- +({-cos(60*\n+90)},{sin(60*\n+90)});
            \end{scope}

            \ifodd\n
                \node[bullet] at ({cos(60*\n+30)},{sin(60*\n+30)}) {};
                \node[bullet] at ({sqrt(3)+cos(60*\n+30)},{sin(60*\n+30)}) {};
            \else
                \node[circ] at ({cos(60*\n+30)},{sin(60*\n+30)}) {};
                \node[circ] at ({sqrt(3)+cos(60*\n+30)},{sin(60*\n+30)}) {};
            \fi
        }
        
        \pic[rotate={150}] at ({sqrt(3)/4},{3/4}) {id};
        \pic[rotate={150}] at ({3*sqrt(3)/4},{-3/4}) {id};
        \pic[rotate={210}] at ({-sqrt(3)/4},{3/4}) {iid};
        \pic[rotate={210}] at ({5*sqrt(3)/4},{-3/4}) {iid};
        \pic[rotate={270}] at ({-sqrt(3)/2},0) {iiid};
        \pic[rotate={270}] at ({3*sqrt(3)/2},0) {iiid};
        \pic[rotate={150}] at ({-sqrt(3)/4},{-3/4}) {i};
        \pic[rotate={150}] at ({5*sqrt(3)/4},{3/4}) {i};
        \pic[rotate={210}] at ({sqrt(3)/4},{-3/4}) {ii};
        \pic[rotate={210}] at ({3*sqrt(3)/4},{3/4}) {ii};
    \end{tikzpicture}
    \caption{Saddle connections on the double hexagon. Glued edges are annotated with the same symbol.}
    \label{fig:double-hexagon}
\end{figure}
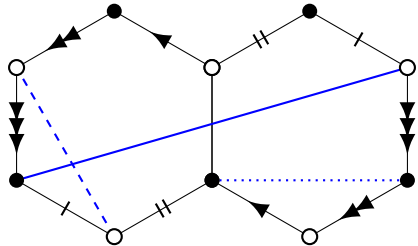

For an example of a saddle connection, consider the translation surface formed by gluing two regular hexagons in Figure~\ref{fig:double-hexagon}. This surface has two cone points $\bullet$ and $\circ$, each of which have an angle of two turns. The three blue paths (solid, dashed, and dotted) each start and end at a cone point and are nontrivial straight lines containing no other cone points, so all three are saddle connections. 

The \textit{holonomy vector} of a saddle connection is the vector whose components record how far the connection travels horizontally and vertically. For example, if the regular hexagons in Figure~\ref{fig:double-hexagon} have circumradius $2$, then the holonomy vectors associated to the solid, dashed, and dotted saddle connections (assuming each is oriented left-to-right) are $\ang{4\sqrt{3},2}$, $\ang{\sqrt{3},-3}$, and $\ang{2\sqrt{3},0}$, respectively. We denote holonomy vector components in angle brackets: $\ang{n,k}$.\footnote{Although this notation for a vector is non-standard, we will find it convenient to be able to visually distinguish holonomy vectors from coordinates of points in a different plane, which we define later in \S\,\ref{ch:algorithm}.} Although holonomy vectors and saddle connections are formally different concepts, we identify them and use the terms interchangeably. This paper studies the distribution of holonomy vector directions on a subset of translation surfaces called square-tiled surfaces.

\subsection{Square-tiled surfaces}

\begin{defn}
A \textit{square-tiled surface} or \textit{origami} is a translation surface which can be constructed from a collection of congruent squares (\textit{tiles}), all of whose sides are either parallel or perpendicular.\footnote{Though this definition may seem somewhat restrictive, the class of square-tiled surfaces displays many interesting properties, and in fact they are known to be dense in every stratum of translation surfaces under the appropriate metric \cite{hubert-schmidt06}.}
\end{defn}
 
Every square-tiled surface is $\GL[2]{\mathbb{R}}$-equivalent to a surface that can be represented as a collection of unit squares with vertices in $\mathbb{Z}^2$. Moreover, we can assume that such a surface is \textit{reduced}, meaning that the additive closure of its set of holonomy vectors (sometimes called the \textit{lattice of periods}) is equal to $\mathbb{Z}^2$. If this is not the case, then it is possible to scale the surface by an element of $\GL[2]{\mathbb{R}}$ to obtain another square-tiled surface with a smaller number of squares, as in Figure~\ref{fig:reduced-sts}. We will henceforth assume that all square-tiled surfaces are reduced and composed of unit squares with vertices in $\mathbb{Z}^2$.\footnote{As it turns out, all translation surfaces in an $\SL[2]{\mathbb{R}}$ orbit share the same slope gap distribution, so scaling a translation surface affects the slope gap distribution in a deterministic, well-understood way \cite{uyanik-work16}.}

Every square-tiled surface with $n$ tiles can be represented by an element of $\mathfrak{S}_n\times\mathfrak{S}_n$, where $\mathfrak{S}_n$ is the symmetric group on $n$ elements, by the following procedure.

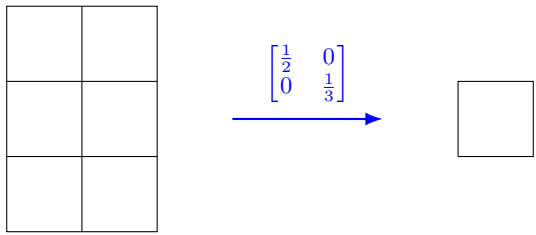
\begin{figure}
    \centering
    \begin{tikzpicture}
        \draw (0,0)--(2,0)--(2,3)--(0,3)--(0,0);
        \draw (1,0)--(1,3);
        \draw (0,1)--(2,1);
        \draw (0,2)--(2,2);
        \draw[-Latex,thick,blue] (3,1.5)--(5,1.5);
        \node[above,blue] at (4,1.6) {\small $\begin{bmatrix}
            \frac{1}{2}&0\\0&\frac{1}{3}
        \end{bmatrix}$};
        \draw (6,1)--(7,1)--(7,2)--(6,2)--(6,1);
    \end{tikzpicture}
    \caption{The unreduced $6$-square-tiled surface on the left can be scaled by an element of $\GL[2]{\mathbb{R}}$ to produce the reduced one-square-tiled surface (the square torus) on the right. In both surfaces, opposite sides are identified.}
    \label{fig:reduced-sts}
\end{figure}

\begin{defn}
Let $(X,\omega)$ be a square-tiled surface with $n$ tiles. Label the tiles of $(X,\omega)$ with the natural numbers $1$ to $n$. Then any element $\sigma\in\mathfrak{S}_n$ defines a bijective mapping from $(X,\omega)$ to itself obtained by mapping the tile with label $n$ to the tile with label $\sigma(n)$.
\begin{enumerate}
    \item The \textit{right} permutation $\sigma_R$ of $(X,\omega)$ is the permutation which induces the mapping that sends each tile to the tile that lies to its right.
    \item The \textit{up} permutation $\sigma_U$ of $(X,\omega)$ is the permutation which induces the mapping that sends each tile to the tile above it.
\end{enumerate}
The \textit{left} and \textit{down} permutations of $(X,\omega)$, $\sigma_L$ and $\sigma_D$, are defined analogously.
\end{defn}

\begin{figure}
    \centering
    \begin{tikzpicture}
        \edef\lab{0}
        \foreach \location in {(-1,1),(0,1),(0,0),(1,0),(2,0)} {
            \pgfmathparse{int(\lab+1)}
            \xdef\lab{\pgfmathresult}
            \draw \location rectangle +(1,1) node[midway] {$\lab$};
        }
        \pic at (-.5,2) {i};
        \pic at (1.5,0) {i};
        \pic at (-.5,1) {ii};
        \pic at (1.5,1) {ii};
    \end{tikzpicture}
    \caption{A $5$-tile origami. Unmarked opposite edges are identified, as are edges with the same marking.}
    \label{fig:permutation-representation}
\end{figure}
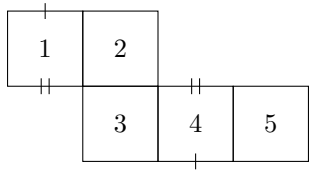

It is straightforward to verify that for any square-tiled surface, $\sigma_L=\sigma_R^{-1}$ and $\sigma_D=\sigma_U^{-1}$. Up to conjugacy (equivalently, relabeling the tiles), every square-tiled surface is uniquely identified by its right and up permutations, which depend only on the intrinsic geometry of the surface. As an example, the $5$-tile origami in Figure~\ref{fig:permutation-representation}, under the labeling given there, has the permutation representation
\[\sigma_R=(1\ 2)(3\ 4\ 5)\,,\quad\sigma_U=(1\ 4)(2\ 3)\,.\]
The permutation representation of any square-tiled surface can be easily calculated by imagining an observer traversing the surface. For instance, if the observer starts on tile~$1$ and moves rightward on the origami in Figure~\ref{fig:permutation-representation}, it next reaches tile~$2$ before returning to tile~$1$, so one cycle in $\sigma_R$ is $(1\ 2)$. If the observer instead moves rightward on tile~$3$, it passes first through tile~$4$ and then through tile~$5$ before returning to tile~$3$. This shows that the other cycle in $\sigma_R$ is $(3\ 4\ 5)$. A similar kind of ``trajectory tracing'' for an upward-moving observer produces $\sigma_U$.

\subsubsection{The square torus}

The simplest square-tiled surface is the one comprising a single tile with opposite edges identified. This surface is called the \textit{square torus}, written $\mathbb{T}^2$, and is illustrated in Figure~\ref{fig:square-torus}.

Since it contains only one tile, the square torus has a permutation representation $\sigma_R=\sigma_U=1$, where $1$ represents the trivial permutation. The square torus has no cone points; its single apparent vertex has an angle of exactly one full turn. For the sake of exposition, we improperly refer to closed geodesics on the torus as ``saddle connections'' on the torus, since these can be thought of as paths connecting a ``cone point'' of the torus to itself.

If we embed the one-square polygonal representation of the square torus in $\mathbb{R}^2$ so that its lower-left corner is at $(0,0)$, then the equivalence relation $\sim$ obtained from gluing its opposite edges identifies $(0,y)$ with $(1,y)$ and $(x,0)$ with $(x,1)$ for all $x$, $y\in[0,1]$. It is straightforward to check that $\mathbb{R}^2\mathbin{/}{\sim}=\mathbb{R}^2\mathbin{/}\mathbb{Z}^2$ (in other words, the quotient obtained from $\sim$ can be given an algebraic structure). We use this algebraic structure to characterize the Veech group of the square torus, $\SL{\mathbb{T}^2}$.

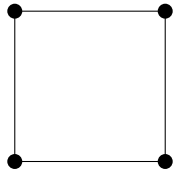
\begin{figure}
    \centering
    \begin{tikzpicture}[scale=2]
        \draw (0,0) rectangle +(1,1);
        \foreach \x in {0,1} {
            \foreach \y in {0,1} {
                \node[bullet] at (\x,\y) {};
            }
        }
    \end{tikzpicture}
    \caption{The square torus, the simplest example of a square-tiled surface.}
    \label{fig:square-torus}
\end{figure}

Let $L\in\SL{\mathbb{T}^2}$ be a linear transformation that fixes the square torus. Since $(1,0)\sim(0,0)$ and $(0,1)\sim(0,0)$, we must have $L((1,0))\sim L((0,0))=(0,0)$, and similarly for $L((0,1))$. But the set of points equivalent to $(0,0)$ is $\mathbb{Z}^2$, so we conclude $L((1,0))$, $L((0,1))\in\mathbb{Z}^2$. It follows that any linear transformation in the Veech group of the square torus is represented by a matrix with all integer entries, that is, that $\SL{\mathbb{T}^2}\leq\SL[2]{\mathbb{Z}}$.

Recall that the matrices $S$ and $T$, representing respectively a counterclockwise rotation through a quarter turn and a horizontal shear by one, generate $\SL[2]{\mathbb{Z}}$. The matrix $S$ maps the unit square to itself (up to translation) and preserves opposite side gluings, so it fixes $\mathbb{T}^2$. We showed in \S\ \ref{subsec:linear-transformations} that a horizontal shear by $1$ preserves $\mathbb{T}^2$; it follows that $\SL[2]{\mathbb{Z}}=\SL{\mathbb{T}^2}$. In other words, the Veech group of the square torus is precisely the modular group. We use this insight in the next section to describe the Veech groups of square-tiled surfaces.

\subsubsection{The modular group and square-tiled surfaces}
\label{subsec:modular-group-origamis}

Observe that an element of $\SL[2]{\mathbb{R}}$ that stabilizes a translation surface must also preserve its set of holonomy vectors and consequently its lattice of periods. Since we assume that all origamis are reduced, this means that any linear transformation that stabilizes an origami must stabilize the lattice $\mathbb{Z}^2$. 
It follows that if $(X,\omega)$ is an origami, its Veech group $\SL{X,\omega}$ must be a subgroup of $\SL[2]{\mathbb{Z}}$. This fact and the fact that every square-tiled surface is a Veech surface \citep{hubert-schmidt06} encourage studying the structure of origami Veech groups through hyperbolic geometry, since actions by subgroups of the modular group on $\mathbb{H}^2$ are particularly well-understood.

\subsubsection{Cusps}
\label{subsec:cusps}

Notice that in $\overline{\mathbb{H}^2}$, points on the real axis and the point $\infty$ are infinitely distant from all points in $\mathbb{H}^2$. For this reason, we refer to the real axis as the \textit{hyperbolic horizon} and the real axis and $\infty$ together as the \textit{points at infinity}. Moreover, the form of the volume element allows visually infinite regions of $\mathbb{H}^2$ to be of finite volume. With these concepts in mind we make the following definition.

\begin{defn}
A \textit{cusp region} is a connected open finite-volume subset of the hyperbolic plane containing exactly one point at infinity. The point at infinity contained in a cusp region is its \textit{cusp}.
\end{defn}

Now let $(X,\omega)$ be a square-tiled surface and consider the quotient $\mathbb{H}^2\mathbin{/}\SL{X,\omega}$. Since that the Veech group of any square-tiled surface is a subgroup of the modular group $\SL[2]{\mathbb{Z}}$, any fundamental domain of $\SL{X,\omega}$ can be represented as a union of fundamental domains of $\SL[2]{\mathbb{Z}}$. The fundamental domain of $\SL{X,\omega}$ has finite hyperbolic volume since $(X,\omega)$ is a square-tiled surface and all square-tiled surfaces are Veech surfaces, so this union must be finite.\footnote{Following this line of reasoning, one can show that $\SL{X,\omega}$ is of finite index in $\SL[2]{\mathbb{R}}$.} Moreover, the fundamental domain must contain a finite number of points at infinity. It follows that such a fundamental domain has the same closure as the union of a finite number of cusp regions.

It turns out there is a correspondence between cusps in the fundamental domain of a Veech group and a certain class of subgroups of the Veech group called \textit{maximal parabolic subgroups}. To build up this connection we define the following notions.

\begin{defn}
An element $A$ of $\SL[2]{\mathbb{Z}}$ is \textit{parabolic} if $\tr{A}=\pm 2$. A subgroup $\Gamma$ of $\SL[2]{\mathbb{Z}}$ all of whose elements are parabolic is a \textit{parabolic subgroup} of $\SL[2]{\mathbb{Z}}$. If $\Gamma$ is not a proper subgroup of any parabolic subgroup of $\SL[2]{\mathbb{Z}}$, it is a \textit{maximal parabolic subgroup} of $\SL[2]{\mathbb{Z}}$.
\end{defn}

Let $A$ be a parabolic element of $\SL[2]{\mathbb{Z}}$ and $a$, $b$ be its upper-left and upper-right entries. Since $\tr{A}=\pm2$, the lower-right entry of $A$ must be $\pm2-a$. The lower-left entry $c$ of $A$ must then satisfy $-(1\mp a)^2=bc$ since $\det{A}=1$. Now recall that
\[A=\begin{bmatrix}
    a & b \\ c & \pm2-a
\end{bmatrix}\]
induces the linear-fractional transformation
\[A\cdot z=\frac{az+b}{cz-a\pm2}\]
on $\mathbb{H}^2$. If $c\neq0$, this transformation has the single real fixed point
\[z=\frac{a\mp1}{c}\,,\]
which lies on the hyperbolic horizon. If $c=0$, this transformation fixes $\infty$ (using the usual definition of linear-fractional transformations on the compactified complex plane). In other words, each parabolic element fixes precisely one point at infinity. It is known that the maximal parabolic subgroup $\Gamma\leq\SL{X,\omega}$ containing $A$ is isomorphic to $\mathbb{Z}\oplus(\mathbb{Z}\mathbin{/}2\mathbb{Z})$ if $-I\in\SL{X,\omega}$, where $-I$ generates the finite cyclic factor; if $-I\notin\SL{X,\omega}$, then $\Gamma$ is isomorphic to $\mathbb{Z}$ (see \cite{uyanik-work16,kumanduri-et-al24}). In either case, it is straightforward to see that the fixed point of the element that generates the infinite cyclic factor is also the unique fixed point of $\Gamma$. The above construction allows us to identify a maximal parabolic subgroup $\Gamma$ with its unique fixed point (cusp) in $\mathbb{H}^2$.

Let $\Gamma$ be a maximal parabolic subgroup of $\SL{X,\omega}$ and let its fixed point at infinity be $a$. If $a\in\mathbb{R}$, it is always possible to conjugate $\Gamma$ in $\SL[2]{\mathbb{Z}}$ by some element $P$ whose associated linear-fractional transformation maps $\infty$ to $a$. The resulting parabolic elements fix $\infty$ and are of the form
\[T^n=\begin{bmatrix}
    1 & n \\ 0 & 1
\end{bmatrix}\]
for $n\in\mathbb{Z}$. The integer $n$ is called the \textit{width} of the cusp that corresponds to $\Gamma$. Geometrically, the width of a cusp is the number of closures of images of the fundamental domain of $\SL[2]{\mathbb{Z}}$ required to cover an associated cusp region.

In the quotiented hyperbolic plane $\mathbb{H}^2\mathbin{/}\SL{X,\omega}$, each cusp has infinitely many representatives, so we must identify a conjugacy class of maximal parabolic subgroups (where conjugation is in $\SL{X,\omega}$) with each cusp. Informally, conjugation in $\SL{X,\omega}$ ``changes'' the fixed point of the subgroup to another representative of the cusp in the hyperbolic plane, performs a translation $T^n$, then ``changes'' the fixed point back to the original representative.

As an example, consider the square-tiled surface with permutation representation
\[\sigma_R=(1\ 2)(3\ 4)\,,\quad\sigma_U=(2\ 3)\,.\]
This surface is drawn in Figure~\ref{fig:four-tile-surface}.
\begin{figure}
    \centering
    \begin{tikzpicture}
        \foreach \LL in {(0,0),(1,0),(1,1),(2,1)}{
            \draw \LL rectangle +(1,1);
        }
        \node at (0.5,0.5) {$1$};
        \node at (1.5,0.5) {$2$};
        \node at (1.5,1.5) {$3$};
        \node at (2.5,1.5) {$4$};
    \end{tikzpicture}
    \caption{A four-tile origami with opposite edges identified}
    \label{fig:four-tile-surface}
\end{figure}
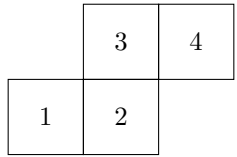
Using techniques discussed in Section~\ref{ch:algorithm} we can calculate that this surface's Veech group has maximal parabolic subgroups
\[\Gamma_\infty=\ang{T^2}\,,\,\Gamma_0=\ang{ST^2S}\,,\,\Gamma_1\ang{(STS)T^2(STS)^{-1}}\,.\]
The fundamental domain of $\Gamma$ is shown in Figure~\ref{fig:cusps} along with its cusps. The subgroup $\Gamma$ has three cusps which can be represented by the points $0$, $1$, and $\infty$. Let $\mathcal{F}$ be the fundamental domain of $\SL[2]{\mathbb{Z}}$ that contains $\infty$ and is symmetric about the imaginary axis. Then the fundamental domain of $\Gamma$ can be decomposed into three cusp regions, $(S\cdot \mathcal{F})\cup (ST\cdot \mathcal{F})$ with cusp $0$, $(TS\cdot\mathcal{F})\cup (TST^{-1}\cdot\mathcal{F})$ with cusp $1$, and $\mathcal{F}\cup (T\cdot\mathcal{F})$ with cusp $\infty$. In this example all three cusps are of width $2$.

\begin{figure}
    \centering
    \begin{tikzpicture}[scale=3]
        \begin{scope}
            \clip (-2,0) rectangle (2,1.5);
            
            \begin{scope}[gray,help lines]
                \foreach \x in {-2,-1,0,1,2}{
                    \draw ({\x+1/2},0) -- +(0,2.5);
                    \draw ({\x-1},0) arc[radius=1,start angle=180,delta angle=-180];
                    \draw ({\x},0) arc[start angle=0,delta angle=180,radius={1/3}];
                    \draw ({\x},0) arc[start angle=180,delta angle=-180,radius={1/3}];
                }
            \end{scope}
            
            \draw[thick,fill=green,fill opacity=.1] (-0.5,2.5) -- (-0.5,{sqrt(3)/6})
                        arc[start angle=120,end angle=0,radius={1/3}]
                        arc[start angle=180,end angle=120,radius=1]
                        arc[start angle=60,end angle=0,radius=1]
                        arc[start angle=180,end angle=60,radius={1/3}]
                        -- (1.5,2.5);
    
            \begin{scope}[font=\footnotesize]
                \node at (0,1.2) {$I$};
                \node at (0,.7) {$S$};
                \node[font=\tiny] at (-.32,.5) {$ST$};
                \node at (1,1.2) {$T$};
                \node at (1,.7) {$TS$};
                \node[font=\tiny] at (1.32,.5) {$TST^{-1}$};
            \end{scope}
        \end{scope}

        \draw (-2,0) -- (2,0);

        \begin{scope}[font=\footnotesize]
            \node[bullet,label={[below,yshift=-1ex]$0$}] at (0,0) {};
            \node[circ,label={[below,yshift=-1ex]$1$}] at (1,0) {};
            \draw[-Latex] (.5,1.4) -- +(0,.2) node[yshift=1ex] {$\infty$};
        \end{scope}
    \end{tikzpicture}
    \caption{Cusps of the Veech group of the origami in Figure~\ref{fig:four-tile-surface}. Each area is labeled by the transformation that maps $\mathcal{F}$ onto it.}
    \label{fig:cusps}
\end{figure}
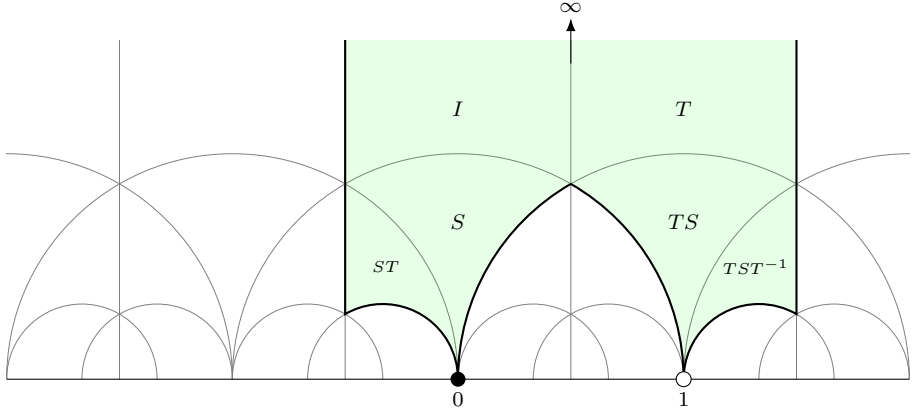

\subsection{Slope gap distributions}
\label{sec:slope-gap-dists}

Recall that a saddle connection on a general translation surface $(X,\omega)$ is a geodesic path connecting two not-necessarily-distinct cone points and the holonomy vector associated to it is the vector in $\mathbb{R}^2$ whose components record how far the connection travels horizontally and vertically. 
It is known that the directions of holonomy vectors on almost every translation surface equidistribute on the circle $S^1$ (\cite{vorobets05}; for the precise sense of ``almost every'' see \cite{kumanduri-et-al24}), and for a Veech surface they always equidistribute (see \cite{veech98}). That is, if $\Lambda(X,\omega)$ denotes the set of holonomy vectors of a translation surface $(X,\omega)$, then for any interval $I\subseteq S^1$, the proportion of vectors in $\Lambda(X,\omega)$ of length at most $R$ with direction in $I$ converges to the angle subtended by $I$ as $R\to\infty$.

Although a na{\"i}ve interpretation of this result suggests that the distribution of holonomy vectors on almost every translation surface is ``truly random'', subtler measures of distribution reveal that this is not the case. In particular, the \textit{gaps} between slopes of holonomy vectors are not distributed as one might expect. We now define some preliminary concepts in order to articulate the concept of holonomy vector slope gaps more precisely.

\begin{defn}
Let $(X,\omega)$ be a translation surface and let $\Lambda(X,\omega)$ be its set of holonomy vectors. If $R$ is a positive real number, the increasing, nonrepeating sequence of $N(R)$ elements
\[\mathcal{S}_R(X,\omega)=\set{\frac{y}{x}\,\middle\vert\,\ang{x,y}\in\Lambda(X,\omega)\,,\,0<x\,,\,0\leq y\leq x\leq R}=\set{s_i}_{i=1}^{N(R)}\]
is the \textit{$R$-slope sequence} of $(X,\omega)$. Furthermore, the sequence
\[\mathcal{G}_R(X,\omega)=\set{R^2(s_{i+1}-s_i)\mid 1\leq i\leq N(R)-1}\]
is the \textit{renormalized $R$-gap sequence} of $(X,\omega)$.
\end{defn}

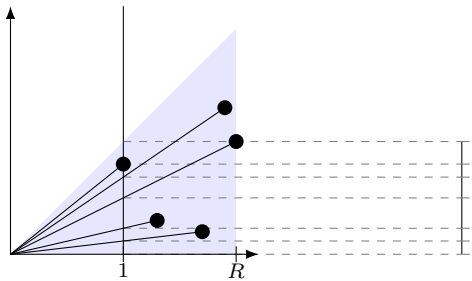
\begin{figure}
    \centering
    \begin{tikzpicture}[font=\footnotesize,scale=1.5]
        \draw[Latex-Latex] (2.2,0) -- (0,0) -- (0,2.2);
        \draw (1,0) pic {i} node[below] {$1$};
        \draw (2,0) pic {i} node[below] {$R$};
        \draw (1,0) -- +(0,2.2);

        \foreach \location in {(2,1),(1,.8),(1.7,.2),(1.3,.3),(1.9,1.3)} {
            \draw (0,0) -- \location node[bullet] {};
        }

        \begin{scope}[on background layer]
            \fill[blue!10] (0,0) -- (2,2) -- (2,0);
            \draw (4,0) -- +(0,1);
            \foreach \y in {0,.5,.8,{.2/1.7},{.3/1.3},{1.3/1.9},1}{
                \draw[dashed,gray] (1,\y) -- (4,\y) pic[rotate=90] {i};
            }
        \end{scope}
    \end{tikzpicture}
    \caption{Holonomy vectors of a translation surface represented in its $R$-slope sequence. The $R$-gap sequence (before renormalization) is represented by segments of the line $x=1$.}
    \label{fig:slope-sequence}
\end{figure}

As defined above, $\mathcal{S}_R(X,\omega)$ is the increasing sequence of unique slopes in $[0,1]$ of holonomy vectors on $(X,\omega)$ with maximum component at most $R$. Notice that the additional requirements on the holonomy vectors stipulate that they lie in a triangular domain shown in Figure~\ref{fig:slope-sequence}. The tips of all holonomy vectors must lie in a discrete set defined by the finite number of cone points of the translation surface \cite{hubert-schmidt06}, so the compactness of the triangular domain guarantees that the $R$-slope sequence is finite. The idea is to examine the limiting distribution of the slopes of \textit{all} holonomy vectors as $R\to\infty$.

We are interested in the spacing of holonomy vector slopes on a translation surface, so we subtract adjacent slopes in the $R$-slope sequence to obtain a gap sequence. In general, as $R\to\infty$, the number of unique holonomy vector slopes $N(R)$ grows quadratically \citep{masur88}. In order to obtain a limiting distribution, the gaps must be renormalized (they would otherwise tend to zero). Since $N(R)$ grows quadratically, a reasonable heuristic renormalization factor is $R^2$. This factor defines the renormalized $R$-gap sequence $\mathcal{G}_R(X,\omega)$. Geometrically, the renormalized $R$-gap sequence can be understood as the renormalized lengths of segments delimited by the projections of holonomy vectors of maximum component $R$ onto the line $x=1$; see Figure~\ref{fig:slope-sequence}.

At this point we are ready to define the limiting distribution we are interested in studying.

\begin{defn}
If there exists a limiting probability density function $f$ such that
\[\lim_{R\to\infty}\frac{\abs{\mathcal{G}_R(X,\omega)\cap(a,b)}}{N(R)}=\int_a^bf(t)dt\,,\]
we say $f$ is the \textit{slope gap distribution} of $(X,\omega)$.
\end{defn}

In other words, the slope gap distribution records the probability that a renormalized holonomy vector slope falls in any interval $(a,b)$. It follows from results of \cite{athreya-chaika12} that such a distribution always exists for a Veech surface.

\subsection{The balanced 10-tile origami}
\label{sec:ten-tile-surface}

In Section~\ref{ch:calculations} we calculate the slope gap distribution for the balanced $10$-tile origami shown in Figure~\ref{fig:ten-tile-origami}. The surface has two cone points each with an angle of two turns.

\begin{figure}
    \centering
    \begin{tikzpicture}
        \edef\lab{0}
        \foreach \x in {1,2,3,4,5,-4,-5,-6,-7,-8} {
            \pgfmathparse{int(\lab+1)}
            \xdef\lab{\pgfmathresult}
            \ifnum\x>0
                \draw (\x,0) rectangle +(1,1) node[midway] {$\lab$};
            \else
                \draw ({-\x},1) rectangle +(1,1) node[midway] {$\lab$};
            \fi
        }
        \foreach \location in {(1,0),(1,1),(6,0),(6,1),(6,2)} {
            \node[bullet] at \location {};
        }
        \foreach \location in {(4,0),(4,1),(4,2),(9,1),(9,2)} {
            \node[circ] at \location {};
        }
    \end{tikzpicture}
    \caption{The balanced $10$-tile origami.}
    \label{fig:ten-tile-origami}
\end{figure}
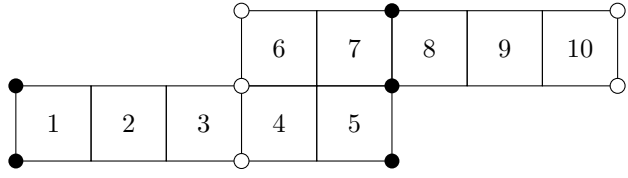

The balanced $10$-tile origami is interesting for a number of reasons. Its relatively high number of tiles gives it a complicated slope gap distribution, but its high degree of symmetry lends it a large Veech group, so that computations are manageable. In particular, the slope gap distribution for every square-tiled surface calculated to date has been a sum of scaled Hall distributions. We show in this paper that the slope gap distribution of the balanced $10$-tile origami cannot be reduced to such a sum.
\section{The slope gap distribution algorithm}
\label{ch:algorithm}

\subsection{Overview}

We use a slightly adapted version of the algorithm developed in \cite{kumanduri-et-al24} (hereafter the modified KSW algorithm) to calculate slope gap distributions for square-tiled surfaces. The algorithm is as follows:
\begin{alg}[The modified KSW algorithm]\hfill
\begin{enumerate}
    \item Choose a Veech surface $(X,\omega)$, and let the number of its cusps be $n$.
    \item Find the generators $P_i$, $1\leq i\leq n$ of the conjugacy classes of the maximal parabolic subgroups $\Gamma_i$ of $\SL{X,\omega}$. Do this by creating a directed graph of the orbit of $(X,\omega)$ under the action of $\SL[2]{\mathbb{Z}}$. Every path from $(X,\omega)$ to a cusp representative and back corresponds to a parabolic generator $P_i=C_iT^{\alpha_i}C_i^{-1}$, where $\alpha_i$ is the cusp width and $C_i$ is a word in $S$ and $T$.
    \item To each parabolic generator $P_i$ there corresponds a connected component of the Poincar{\'e} section under the horocycle flow, $\Omega_i$. Each $\Omega_i$ can be parametrized as a triangle in the plane (Equation~\ref{eq:poincare-parametrization}).
    \item For each point in each of the $\Omega_i$, determine which short holonomy vector has the least slope. Doing so partitions each $\Omega_i$ into a finite collection of convex polygons.
    \item On each polygonal region, calculate the area swept out by the level sets of the return time function.
    \item Sum the swept-areas of each polygonal region in each $\Omega_i$ to obtain the unnormalized CDF of the slope gap distribution.
    \item Differentiate and normalize the unnormalized CDF to obtain the pdf of the slope gap distribution on $(X,\omega)$.
\end{enumerate}
\end{alg}

The following sections explain the process in more detail. Throughout the section, we refer to the calculation of the slope gap distribution for the three-tile origami in Figure~\ref{fig:three-tile-STS} as a concrete example of the process.

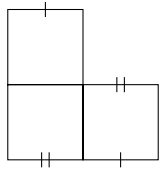
\begin{figure}[b]
    \centering
    \begin{tikzpicture}[scale=1]

    
        \draw (-1,0) rectangle +(1,1) ++(1,0) rectangle ++(1,1) ++(-2,0) rectangle ++(1,1);
        
            
        \draw (.5,.1) -- (.5,-.1)
            (-.5,2.1) -- (-.5,1.9);

        \draw (.45,1.1) -- (.45,.9)
            (.55,1.1) -- (.55,.9)
            (-.55,.1) -- (-.55,-.1)
            (-.45,.1) -- (-.45,-.1);
        
    \end{tikzpicture}
    \caption{The three-tile origami.}
    \label{fig:three-tile-STS}
\end{figure}

\subsection{The horocycle flow}

Let $(X,\omega)$ be a translation surface and $\Lambda(X,\omega)$ be the set of its holonomy vectors. When the translation surface under discussion is clear from context we abbreviate $\Lambda(X,\omega)$ to just $\Lambda$. Recall that the Veech group of $(X,\omega)$, written $\SL{X,\omega}$, is the stabilizer of $(X,\omega)$ in $\SL[2]{\mathbb{R}}$.

Define the \textit{horocycle flow} to be the action of the one-parameter family of matrices
\[h_s=\begin{bmatrix}
    \phantom{-}1 & 0 \\ -s & 1
\end{bmatrix}\]
on the space $\SL[2]{\mathbb{R}}\mathbin{/}\SL{X,\omega}$ by left multiplication. The action of the horocycle flow extends naturally to $\Lambda(X,\omega)$. Geometrically, the horocycle flow shears the plane downwards by $s$. For any vector $\mathbf{v}\in\mathbb{R}^2$, it is straightforward to verify that
\[m(h_s\mathbf{v})=m(\mathbf{v})-s\,,\]
where $m\colon\mathbb{R}^2\to\mathbb{R}$, given by
\[m(\mathbf{v})=\frac{\mathbf{v}\cdot\ang{0,1}}{\mathbf{v}\cdot\ang{1,0}}\,,\]
maps a vector in $\mathbb{R}^2$ to its slope. It follows that the difference in slopes between any two vectors $\mathbf{u}$, $\mathbf{v}\in\mathbb{R}^2$ is preserved under the action of $h_s$: 
\[m(h_s\mathbf{u})-m(h_s\mathbf{v})=m(\mathbf{u})-s-m(\mathbf{v})+s=m(\mathbf{u})-m(\mathbf{v})\,.\]

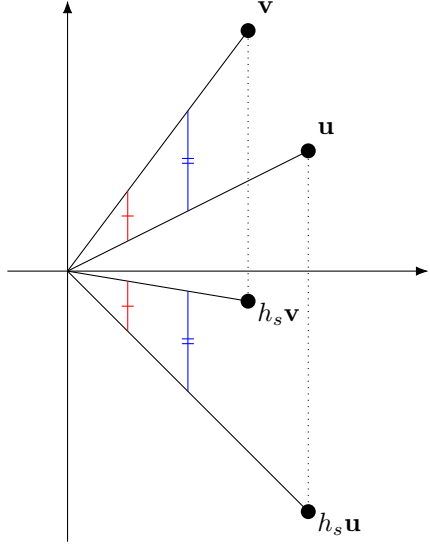
\begin{figure}
    \centering
    \begin{tikzpicture}[scale=.8]
        \draw[-Latex] (-1,0) -- (6,0);
        \draw[-Latex] (0,-4.5) -- (0,4.5);
        
        \draw[-] (0,0) -- (4,2) node[name=u,bullet,label={[above right]$\mathbf{u}$}] {};
        \draw[-] (0,0) -- (3,4) node[name=v,bullet,label={[above right]$\mathbf{v}$}] {};
        
        \draw[-] (0,0) -- (4,-4) node[name=u',bullet,label={[below right]$h_s\mathbf{u}$}] {};
        \draw[-] (0,0) -- (3,-.5) node[name=v',bullet,label={[below right]$h_s\mathbf{v}$}] {};

        \draw[dotted] (u) -- (u');
        \draw[dotted] (v) -- (v');
        
        \draw[red] (1,{-1/6}) -- (1,-1);
        \draw[red] (1,{4/3}) -- (1,{1/2});
        \draw[red] (.9,{-7/12}) -- +(.2,0);
        \draw[red] (.9,{11/12}) -- +(.2,0);

        \draw[blue] (2,{-1/3}) -- (2,-2);
        \draw[blue] (2,{8/3}) -- (2,1);
        \draw[blue] (1.9,{-7/6+.04}) -- +(.2,0);
        \draw[blue] (1.9,{11/6+.04}) -- +(.2,0);
        \draw[blue] (1.9,{-7/6-.04}) -- +(.2,0);
        \draw[blue] (1.9,{11/6-.04}) -- +(.2,0);
        
    \end{tikzpicture}
    \caption{The horocycle flow preserves slope gaps between holonomy vectors. Geometrically, the vertical distance between two holonomy vectors at any given value of $x$ does not change.}
    \label{fig:horocycle-slope-gaps}
\end{figure}

In general, both $\SL[2]{\mathbb{R}}$ and its quotient $\SL[2]{\mathbb{R}}\mathbin{/}\SL{X,\omega}$ are three-dimensional spaces (four real dimensions for each matrix entry with one constraint that the determinant be one). The horocycle flow together with $\SL[2]{\mathbb{R}}\mathbin{/}\SL{X,\omega}$ thus forms a three-dimensional dynamical system. Let $\Omega(X,\omega)$ be the set of surfaces in the $\SL[2]{\mathbb{R}}$-orbit of $(X,\omega)$ which have a horizontal saddle connection of length at most one. That is, let
\begin{equation}\label{eq:poincar'e-section}
\Omega(X,\omega)=\{g\in\SL[2]{\mathbb{R}}\mathbin{/}\SL{X,\omega}\mid g\Lambda\cap((0,1]\times\{0\})\neq\varnothing\}\,.
\end{equation}
We abbreviate $\Omega(X,\omega)$ to $\Omega$ when the Veech surface under discussion is clear from context. We show in the next section that $\Omega$ is a two-dimensional space. It can also be shown that almost every orbit under the horocycle flow intersects $\Omega$ transversely in $\SL[2]{\mathbb{R}}\mathbin{/}\SL{X,\omega}$ \citep{athreya-cheung14}. A lower-dimensional subspace which is transversely intersected by almost every orbit in a higher-dimensional dynamical system is called a \textit{Poincar{\'e} section}---$\Omega$ is a Poincar{\'e} section for the horocycle flow. The Poincar{\'e} section allows us to simplify the continuous dynamical system $(\SL[2]{\mathbb{R}}\mathbin{/}\SL{X,\omega},h_s)$ into a discrete dynamical system by analyzing the \textit{return times} at which an orbit under $h_s$ returns to $\Omega$. Since $s$ represents the changes in vector slopes under the horocycle flow, return times are in correspondence with slope gaps. It follows that the problem of determining the slope gap distribution can be solved by determining return times to $\Omega$.

\subsection{Parabolic generators and \texorpdfstring{$\SL[2]{\mathbb{Z}}$}{SL2(Z)} orbits}
\label{sec:par-gens-sl2z-orbits}

In general, the Veech group of a translation surface with $n$ cusps has $n$ maximal parabolic subgroups $\Gamma_i$, $i=1,\dots,n$. There are two cases: if $-I\in\SL{X,\omega}$, where $I$ is the $2\times2$ identity matrix, then $\Gamma_i\cong\mathbb{Z}\oplus(\mathbb{Z}\mathbin{/}2\mathbb{Z})$. Otherwise, $\Gamma_i\cong\mathbb{Z}$. In this paper, we only concern ourselves with Veech surfaces whose Veech groups contain $-I$; the case when $-I\notin \SL{X,\omega}$ is similar, and details can be found in \cite{kumanduri-et-al24}. When $-I\in \SL{X,\omega}$, it is always possible to select a generator $P_i$ of the infinite cyclic factor of $\Gamma_i$ that satisfies
\[C_iP_iC_i^{-1}=S_i=\begin{bmatrix}
1 & \alpha_i \\ 0 & 1
\end{bmatrix}\]
for some $C_i\in\SL[2]{\mathbb{R}}$ and some $\alpha_i>0$ such that $C_i(X,\omega)$ has a shortest horizontal holonomy vector $\ang{1,0}$.

Recall that the Veech group of a square-tiled surface is a subgroup of $\SL[2]{\mathbb{Z}}$. For a square-tiled surface, it is possible to choose $\alpha_i\in\mathbb{N}$ and $C_i$ such that it is the product of an element of $\SL[2]{\mathbb{Z}}$ and a diagonal element $D$ of $\SL[2]{\mathbb{R}}$. The diagonal matrix $D$ may be needed to ensure the shortest horizontal vector on $C_i(X,\omega)$ has length $1$.
As mentioned previously, $\SL[2]{\mathbb{Z}}$ is generated by the matrices
\[S=\begin{bmatrix} 0 & -1 \\ 1 & \phantom{-}0 \end{bmatrix}
\qquad \text{and} \qquad
T=\begin{bmatrix} 1 & 1 \\ 0 & 1 \end{bmatrix}\,.\]
Recall that in $\mathbb{R}^2$, $S$ is a counterclockwise rotation through a quarter-turn while $T$ is a horizontal shear by $1$. In $\mathbb{H}^2$, $S$ is inversion through the unit circle followed by reflection across the imaginary axis and $T$ is a rightwards translation by $1$. It follows that $C_i$ can be expressed as a word in $S$ and $T$, possibly times a diagonal matrix $D$ in $\SL[2]{\mathbb{R}}$, and that $S_i=T^{\alpha_i}$.

To compute $P_i$, observe that the transformations $S$ and $T$ will always take a square-tiled surface to another square-tiled surface. For $S$, it is straightforward to see that rotating a collection of unit squares with vertices in $\mathbb{Z}^2$ by a quarter turn will result in another collection of unit squares with vertices in $\mathbb{Z}^2$. For $T$, observe that if we view the origami as a collection of disjoint squares, then $T$ will shear each square horizontally by one unit. We can then cut each resulting parallelogram vertically across its center (cf.\ the dashed line in Figure~\ref{fig:stabilizing-transformations}) and glue the diagonal edges together according to the edge identifications in order to recover a collection of unit squares. We can thus create a directed graph with edges labeled $S$ and $T$ showing the orbit of $(X,\omega)$ under $\SL[2]{\mathbb{Z}}$. Such a graph is shown for the three-tile origami in Figure~\ref{fig:directed-orbit-graph}. Parabolic elements of the Veech group then correspond to walks on the graph starting and ending at $(X,\omega)$ of the form $W(S,T)^{-1}T^{\alpha_i}W(S,T)$, where $W(S,T)$ is some word in $S$ and $T$.\footnote{Note that the words $W(S,T)$ are distinct from the $C_i$ and their inverses. The latter may include a diagonal matrix $D$, while the former by definition does not.} Each conjugacy class of maximal parabolic subgroups has a generator $P_i$ of its infinite cyclic factor that corresponds to a walk that does not traverse any edge twice. By finding such walks, we can find the maximal parabolic subgroups.

\begin{figure}
    \centering
    \begin{tikzpicture}[scale=.8]

    
        \draw (-5,0.5) rectangle +(1,1) ++(-1,0) rectangle ++(1,1) ++(1,-1) rectangle ++(1,1);
        
        \draw (-1,0) rectangle +(1,1) ++(1,0) rectangle ++(1,1) ++(-2,0) rectangle ++(1,1);
        
        \draw (3,0) rectangle +(1,1) ++(1,0) rectangle ++(1,1) ++(-2,0) rectangle ++(1,1);

        
        \begin{scope}[-Latex,bend left,font=\footnotesize]
            \draw[blue] (-2.8,1.6) to[edge label={$S$}] (-1.2,1.6);
            \draw[blue] (-1.2,0.6) to[edge label={$S$}] (-2.8,0.6);
            \draw[red] (1.2,1.6) to[edge label={$T$}] (2.8,1.6);
            \draw[red] (2.8,0.6) to[edge label={$T$}] (1.2,0.6);
        \end{scope}

        \begin{scope}[anchor=center,font=\footnotesize]
            \draw[red,-Latex] (-6.2,.5) arc[start angle=315,delta angle=-270,radius={1/sqrt(2)}] node[midway,label={[left]$T$}] {};
            \draw[blue,Latex-] (5.2,.5) arc[start angle=225,delta angle=270,radius={1/sqrt(2)}] node[midway,label={[right]$S$}] {};
        \end{scope}

            
        \draw (-3.5,.6) -- (-3.5,.4)
            (-4.5,1.6) -- (-4.5,1.4);

        \draw (-3.45,1.6) -- (-3.45,1.4)
            (-3.55,1.6) -- (-3.55,1.4)
            (-4.55,.6) -- (-4.55,.4)
            (-4.45,.6) -- (-4.45,.4);
            
        \draw (.5,.1) -- (.5,-.1)
            (-.5,2.1) -- (-.5,1.9);

        \draw (.45,1.1) -- (.45,.9)
            (.55,1.1) -- (.55,.9)
            (-.55,.1) -- (-.55,-.1)
            (-.45,.1) -- (-.45,-.1);
        
    \end{tikzpicture}
    \caption{The $\SL[2]{\mathbb{Z}}$ orbit of the three-tile origami as a directed graph with edges labeled by $S$ and $T$. Unmarked opposite edges are identified.}
    \label{fig:directed-orbit-graph}
\end{figure}
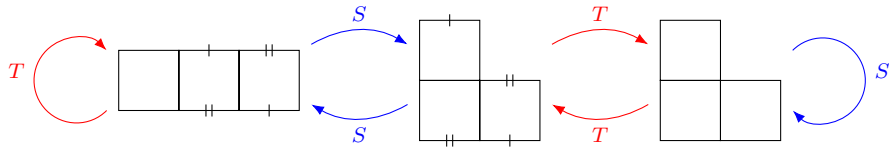

In Figure~\ref{fig:directed-orbit-graph}, the walks starting and ending at the three-tile origami are $S^2$, $T^2$, $TST$ and $STS^{-1}$. We observe that $T^2$ and $STS^{-1}$ are of the required form\footnote{Since $-I$ is in the Veech group, we are actually working with $\SL[2]{\mathbb{Z}}\mathbin{/}-I=\PSL[2]{\mathbb{Z}}$. In this group, $S=S^{-1}$, so we are justified in rewriting $STS$ as $STS^{-1}$.}. It follows that there are exactly two connected components of the three-tile origami's Poincar{\'e} section: the first corresponds to the cusp $STS^{-1}$ of width $\alpha_1=1$, while the second corresponds to the cusp $T^2$ of width $\alpha_2=2$. For the remainder of this section, we denote these connected components as $\Omega_1$ and $\Omega_2$, respectively. The fundamental domain of the three-tile origami's Veech group and its cusps are illustrated on $\mathbb{H}^2$ in Figure~\ref{fig:three-tile-cusps}; the cosets of the fundamental domain tile the entire hyperbolic upper half-plane.

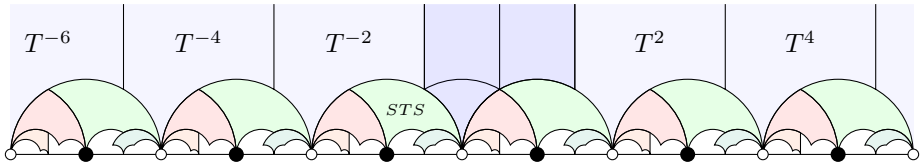
\begin{figure}
    \centering
    \begin{tikzpicture}
        \draw (-6,0) -- (6,0);

        \fill[blue,fill opacity=0.06] ({-1/2},2) -- ({-1/2},{sqrt(3)/2}) arc[start angle=60,delta angle=-60,radius=1] arc[start angle=180,delta angle=-60,radius=1] arc[start angle=120,delta angle=-60,radius=1] -- ({3/2},2);

        \draw ({1/2},{sqrt(3)/2}) arc[start angle=60,delta angle=60,radius=1] arc[start angle=60,delta angle=-60,radius=1] arc[start angle=180,delta angle=-60,radius=1] arc[start angle=120,delta angle=-60,radius=1];
        \draw ({1/2},{sqrt(3)/2}) -- ({1/2},2)
        ({-1/2},{sqrt(3)/2}) -- ({-1/2},2)
        ({3/2},{sqrt(3)/2}) -- ({3/2},2);

        \foreach \n in {-6,-4,-2,0,2,4,6} {
        \begin{scope}
            \clip (-6,0) rectangle (6,2);
            
            \fill[xshift=\n cm,blue,fill opacity=.04] (0,0) arc[start angle=180,end angle=60,radius=1] -- ({3/2},2) -- ({-1/2},2) -- ({-1/2},{sqrt(3)/2})
            arc[start angle=60,end angle=0,radius=1];

            \fill[xshift=\n cm,green,fill opacity=.1] ({1/2},{sqrt(3)/2}) arc[start angle=60,end angle=0,radius=1] arc[start angle=180,delta angle=-120,radius={1/3}] arc[start angle=120,delta angle=-120,radius={1/3}] arc[start angle=0,end angle=120,radius=1];

            \fill[xshift=\n cm,green!60!blue,fill opacity=.1] (2,0) arc[start angle=0,delta angle={180-atan(3*sqrt(3)/13)},radius={1/3}] arc[start angle={180-atan(4*sqrt(3))},end angle=0,radius={1/8}] arc[start angle=180,end angle={atan(4*sqrt(3))},radius={1/8}] arc[start angle={180-atan(5*sqrt(3)/11)},end angle=0,radius={1/5}];

            \fill[xshift=\n cm,red,fill opacity=.1] (0,0) arc[start angle=180,delta angle=-60,radius=1]
            arc[start angle=60,delta angle=-60,radius=1]
            arc[start angle=0,end angle={180-atan(5*sqrt(3)/11)},radius={1/5}]
            arc[start angle={atan(3*sqrt(3)/13)},end angle=180,radius={1/3}];

            \fill[xshift=\n cm,red!30!orange,fill opacity=.1] (0,0) arc[start angle=180,end angle=60,radius={1/3}]
            -- ({1/2},0)
            arc[start angle=0,end angle={180-atan(8*sqrt(3)/13)},radius={1/16}]
            arc[start angle={atan(5*sqrt(3)/37)},end angle=180,radius={1/5}];
            
        \end{scope}
        }
        
        \foreach \n in {-6,-4,-2,0,2,4} {

            \draw[xshift=\n cm] (0,0) arc[start angle=180,delta angle=-180,radius=1]
            ({3/2},{sqrt(3)/2}) -- ({3/2},2);

            \draw[xshift=\n cm] ({1/2},{sqrt(3)/2}) arc[start angle=60,delta angle=-60,radius=1] arc[start angle=180,delta angle=-120,radius={1/3}] arc[start angle=120,delta angle=-120,radius={1/3}];

            \draw[xshift=\n cm] (2,0) arc[start angle=0,delta angle={180-atan(3*sqrt(3)/13)},radius={1/3}] arc[start angle={180-atan(4*sqrt(3))},end angle=0,radius={1/8}] arc[start angle=180,end angle={atan(4*sqrt(3))},radius={1/8}] arc[start angle={180-atan(5*sqrt(3)/11)},end angle=0,radius={1/5}];

            \draw[xshift=\n cm] (0,0) arc[start angle=180,delta angle=-60,radius=1]
            arc[start angle=60,delta angle=-60,radius=1]
            arc[start angle=0,end angle={180-atan(5*sqrt(3)/11)},radius={1/5}]
            arc[start angle={atan(3*sqrt(3)/13)},end angle=180,radius={1/3}];

            \draw[xshift=\n cm] (0,0) arc[start angle=180,end angle=60,radius={1/3}]
            -- ({1/2},0)
            arc[start angle=0,end angle={180-atan(8*sqrt(3)/13)},radius={1/16}]
            arc[start angle={atan(5*sqrt(3)/37)},end angle=180,radius={1/5}];
            
            \ifnum\n=0
                \relax
            \else
                \node[xshift=\n cm] at ({1/2},1.5) {$T^{\n}$};
            \fi
            
            \node[xshift=\n cm,open bullet] at (0,0) {};
            \node[xshift=\n cm,bullet] at (1,0) {};
        }
        \node[open bullet] at (6,0) {};
        
        \node[font=\tiny] at (-.75,.6) {$STS$};
    \end{tikzpicture}
    \caption{The fundamental domain of the maximal parabolic subgroups of the three-tile origami's Veech group and its images in $\mathbb{H}^2$. The two cusps are indicated by open ($STS$) and closed ($T^2$) dots.}
    \label{fig:three-tile-cusps}
\end{figure}

Recall from Equation~\ref{eq:poincar'e-section} that the Poincar{\'e} section of the horocycle flow on $(X,\omega)$ is the collection of all translation surfaces in the $\SL[2]{\mathbb{R}}$ orbit of $(X,\omega)$ with a short horizontal holonomy vector, modulo the transformations which take $(X,\omega)$ to itself (namely, the transformations in its Veech group $\SL{X,\omega}$). However, each surface in the Poincar{\'e} section is uniquely represented by the element of $\SL[2]{\mathbb{R}}$ which carries $C_i(X,\omega)$ to that surface. In particular, the horizontal holonomy vector $\ang{1,0}$ of $C_i(X,\omega)$ must be mapped to a short horizontal holonomy vector $\ang{a,0}$, where $0<\abs{a}\leq1$, under each element of the Poincar{\'e} section. It follows that the Poincar{\'e} section can be parametrized by the two-dimensional family of matrices of the form
\[M_{a,b}=\begin{bmatrix}
    a & b \\ 0 & 1/a
\end{bmatrix}\,,\]
where $0<\abs{a}\leq1$.

Recall that $-I$, $S_i$ are in the Veech group of the transformed surface $C_i(X,\omega)$, so that this parametrization is not one-to-one; we can obtain a one-to-one parametrization by quotienting the $a$-$b$ plane by $-I$ and $S_i$. Quotienting by $-I$ makes it always possible to choose $a$ positive, and so replaces the constraint $0<\abs{a}\leq1$ with the constraint $0<a\leq1$. Notice that the cosets of $\ang{S_i}$ are given by
\[
\begin{bmatrix}
    a & b \\ 0 & 1/a
\end{bmatrix}\begin{bmatrix}
    1 & \alpha_i \\ 0 & 1
\end{bmatrix}^\ell=\begin{bmatrix}
    a & b \\ 0 & 1/a
\end{bmatrix}\begin{bmatrix}
    1 & \ell\alpha_i \\ 0 & 1
\end{bmatrix}=\begin{bmatrix}
    a & b+\ell\alpha_ia \\ 0 & 1/a
\end{bmatrix}
\]
for $\ell\in\mathbb{Z}$. In particular it is possible to choose $\ell$ such that $b$ falls in any fixed interval of length $a\alpha_i$. It turns out to be convenient to choose $b$ to lie in the interval
\[\left[\frac{1-x_0a}{y_0}-\alpha_ia\,,\,\frac{1-x_0a}{y_0}\right)\,,\]
where $y_0>0$ is the shortest vertical component of a saddle connection on $C_i(X,\omega)$ and $x_0$ is the shortest horizontal component of a saddle connection with height $y_0$. This produces the parametrization
\begin{equation}\label{eq:poincare-parametrization}
\Omega_i=\set{(a,b)\in\mathbb{R}^2\,\middle\vert\,0<a\leq 1,\frac{1-x_0a}{y_0}-\alpha_ia\leq b<\frac{1-x_0a}{y_0}}\,,
\end{equation}
which defines a triangle in the $a$-$b$ plane.

In the case of the three-tile origami, we see that on both connected components of the Poincar{\'e} section, $x_0=y_0=1$. The two components are therefore parametrized by the triangles
\begin{align*}
\Omega_1&=\left\{(a,b)\in\mathbb{R}^2\mid0<a\leq1,\,1-2a\leq b<1-a\right\}\,,\\
\Omega_2&=\left\{(a,b)\in\mathbb{R}^2\mid0<a\leq1,\,1-3a\leq b<1-a\right\}\,.
\end{align*}
(Note that we identify the Poincar{\'e} section components with their parametr\-izations by abuse of notation). These triangles and their ``cosets'' in the $a$-$b$ plane are shown in Figure~\ref{fig:poincar'e-section-plane}.

\begin{figure}
    \centering
    \begin{tikzpicture}[scale=.97]
    \begin{scope}[yshift=4.5cm,scale=2.1,font=\footnotesize]
        \clip (-2,-2) rectangle (2,2);
        \begin{scope}[on background layer]
            \draw[-Latex] (-2,0) -- (2,0) node[above left] {$a$};
            \draw[-Latex] (0,-2) -- (0,2) node[below right] {$b$};
        \end{scope}
    
        \draw[very thick,fill=blue,fill opacity=0.1] (1,0) -- (0,1) -- (1,-1) -- cycle;
        \draw (-1,0) -- (0,-1) -- (-1,1) -- cycle;
        \foreach \n in {-2,-1,1,2} {
            \draw (1,{-\n-1}) -- (0,1) -- (1,{-\n}) -- cycle;
            \draw (-1,{\n+1}) -- (0,-1) -- (-1,{\n}) -- cycle;
        }
    
        \begin{scope}[on background layer]
            \node[fill=white] at (0.7,0) {$\Omega_1$};
            \node[fill=white,rotate=-60] at (0.8,-1) {$S_i\Omega_1$};
            \node[fill=white,rotate=-80] at (0.81,-1.8) {$S_i^2\Omega_1$};
            \node[fill=white] at (0.7,0.8) {$S_i^{-1}\Omega_1$};
            \node[fill=white] at (0.7,1.3) {$S_i^{-2}\Omega_1$};
            \node[fill=white] at (-0.7,0) {$-\Omega_1$};
            \node[fill=white,rotate=-60] at (-0.8,1) {$-S_i\Omega_1$};
            \node[fill=white,rotate=-80] at (-0.81,1.8) {$-S_i^2\Omega_1$};
            \node[fill=white] at (-0.7,-0.8) {$-S_i^{-1}\Omega_1$};
            \node[fill=white] at (-0.7,-1.3) {$-S_i^{-2}\Omega_1$};
    
            \node[rotate=90] at (0.5,1.8) {\dots};
            \node[rotate=90] at (0.5,-1.8) {\dots};
            \node[rotate=90] at (-0.5,-1.8) {\dots};
            \node[rotate=90] at (-0.5,1.8) {\dots};
        \end{scope}
    \end{scope}
    \begin{scope}[yshift=-4.5cm,scale=2.1,font=\footnotesize]
        \clip (-2,-2) rectangle (2,2);
        \begin{scope}[on background layer]
            \draw[-Latex] (-2,0) -- (2,0) node[above left] {$a$};
            \draw[-Latex] (0,-2) -- (0,2) node[below right] {$b$};
        \end{scope}
    
        \draw[very thick,fill=red,fill opacity=0.1] (1,0) -- (0,1) -- (1,-2) -- cycle;
        \draw (-1,0) -- (0,-1) -- (-1,2) -- cycle;
        \foreach \n in {-2,-1,1,2} {
            \draw (1,{-2*\n-2}) -- (0,1) -- (1,{-2*\n}) -- cycle;
            \draw (-1,{2*\n+2}) -- (0,-1) -- (-1,{2*\n}) -- cycle;
        }
    
        \begin{scope}[on background layer]
            \node[fill=white] at (0.7,0) {$\Omega_2$};
            \node[fill=white,rotate=-80] at (0.54,-1) {$S_i\Omega_2$};
            \node[fill=white] at (0.6,1) {$S_i^{-1}\Omega_2$};
            \node[fill=white,rotate=60] at (0.5,1.8) {$S_i^{-2}\Omega_2$};
            \node[fill=white] at (-0.7,0) {$-\Omega_2$};
            \node[fill=white,rotate=-80] at (-0.54,1) {$-S_i\Omega_2$};
            \node[fill=white] at (-0.6,-1) {$-S_i^{-1}\Omega_2$};
            \node[fill=white,rotate=60] at (-0.5,-1.8) {$-S_i^{-2}\Omega_2$};
    
        \end{scope}
    \end{scope}
    \end{tikzpicture}
    \caption{The Poincar{\'e} section for the three-tile origami and its $\Gamma_i$-cosets in the $a$-$b$ plane. Note that the cosets only fill up the section of the plane with $\abs{a}\leq1$.}
    \label{fig:poincar'e-section-plane}
\end{figure}
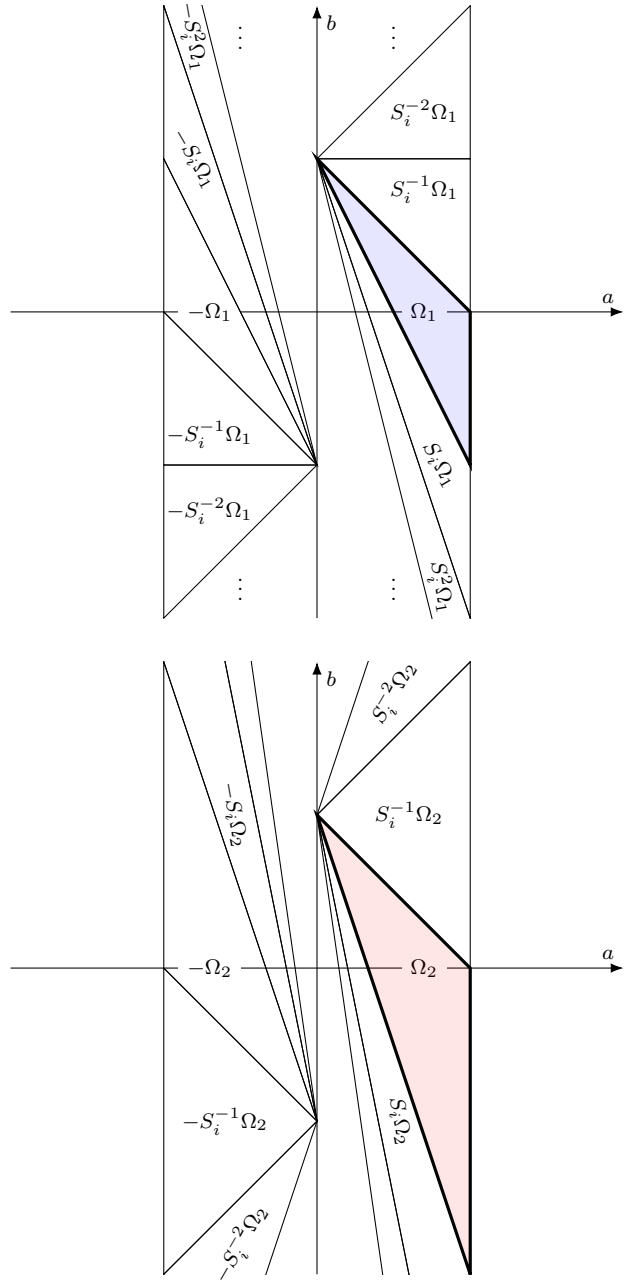

\subsection{Winning saddle connections}
Recall that the horocycle flow shears holonomy vectors vertically downwards by decreasing their slopes. It follows that the return time function to the Poincar{\'e} section, which has a short holonomy vector of slope $0$, is given by the smallest positive slope among all short holonomy vectors, since this is the vector that first reaches the horizontal axis. In particular, the slope of a holonomy vector $M_{a,b}\mathbf{v}=M_{a,b}\ang{x,y}$ on $M_{a,b}C_i(X,\omega)$ is given by the formula
\begin{equation}\label{eq:winners-slope}
\frac{y}{a(ax+by)}=\left(\frac{a^2x}{y}+b\right)^{-1}\,.
\end{equation}
The vector $\mathbf{v}=\ang{x,y}$ that first reaches the horizontal axis is called a \textit{winner}, a \textit{winning vector}, or by abuse of terminology a \textit{winning saddle connection}. The specific choice of parametrization for the Poincar{\'e} section ensures that there are a finite number of winning saddle connections on each connected component $\Omega_i$ \cite{kumanduri-et-al24}.

Since the image of a winner under $M_{a,b}$ must be a short holonomy vector, we require that $0<ax+by\leq1$. This requirement effectively carves ``strips'' out of the Poincar{\'e} section in the $a$-$b$ plane bounded above and below by the parallel lines $b=(1-ax)/y$ and $b=-ax/y$. Notice by comparison with Equation~\ref{eq:winners-slope} that where two such strips intersect, the steeper one represents the winner. The problem then reduces to deciding what regions of $\Omega_i$ each holonomy vector wins on. The winning regions are always convex polygons. To determine the winning saddle connections, we adapt a different algorithm from \cite{alassal-et-al25}. Since the exposition of this algorithm detracts from the main argument here we postpone it to Section~\ref{ch:determining-winners}.

In the case of the three-tile surface, it is possible to see from direct inspection of the shapes of winning strips that in the case of the three-tile origami, $\ang{1,1}$ wins everywhere on $\Omega_1$, while $\ang{1,1}$ and $\ang{2,1}$ partition $\Omega_2$ into two winning triangles separated by the line $b=1-2a$. This is shown in Figure~\ref{fig:winning-regions-three-tile}.

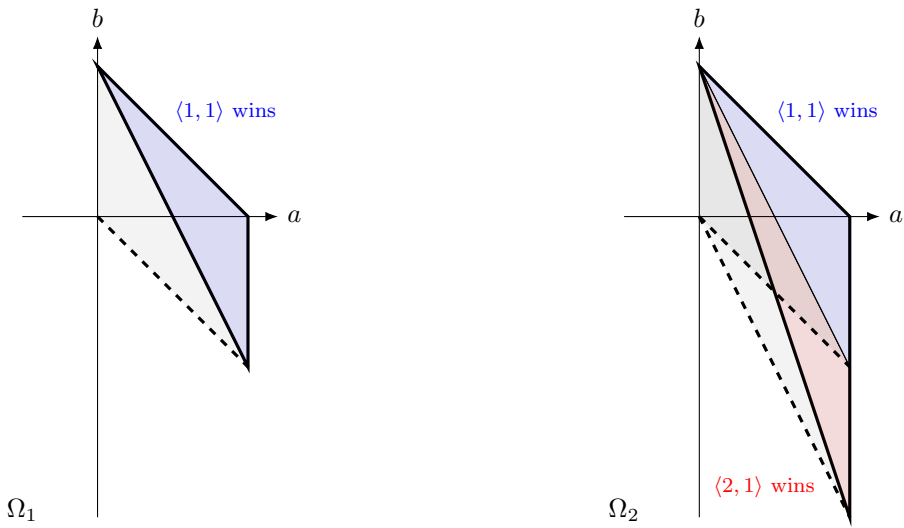
\begin{figure}
    \centering
    \begin{tikzpicture}
    \begin{scope}[xshift=-4cm,scale=2]
        \begin{scope}[on background layer]
            \draw[-Latex] (-.5,0) -- (1.2,0) node[right] {$a$};
            \draw[-Latex] (0,-2) -- (0,1.2) node[above] {$b$};
        \end{scope}
    
        \draw[very thick] (1,0) -- (0,1) -- (1,-1) -- cycle;
        \draw[fill=blue,fill opacity=.1] (1,0) -- (0,1) -- (1,-1) -- cycle;
    
        \begin{scope}
            \clip (-.5,-2) rectangle (1.2,1.2);
            \draw[very thick,dashed] (0,0) -- (1,-1);
            \fill[fill opacity=.05] (0,1) -- (1,0) -- (1,-1) -- (0,0) -- cycle;
        \end{scope}
        
        \node[blue,font=\footnotesize] at (.85,.7) {$\ang{1,1}$ wins};
        \node[anchor=base] at (-.5,-2) {$\Omega_1$};
    \end{scope}
    \begin{scope}[xshift=4cm,scale=2]
        \begin{scope}[on background layer]
            \draw[-Latex] (-.5,0) -- (1.2,0) node[right] {$a$};
            \draw[-Latex] (0,-2) -- (0,1.2) node[above] {$b$};
        \end{scope}
    
        \draw[very thick] (1,0) -- (0,1) -- (1,-2) -- cycle;
        \draw[fill=blue,fill opacity=.1] (1,0) -- (0,1) -- (1,-1) -- cycle;
        \draw[fill=red,fill opacity=.1] (1,-2) -- (0,1) -- (1,-1) -- cycle;
    
        \begin{scope}
            \clip (-.5,-2) rectangle (1.2,1.2);
            \draw[very thick,dashed] (0,0) -- (1,-2);
            \fill[fill opacity=.05] (0,1) -- (1,0) -- (1,-1) -- (0,0) -- cycle;
            \draw[very thick,dashed] (0,0) -- (1,-1);
            \fill[fill opacity=.05] (0,1) -- (0,0) -- (1,-2) -- (1,-1) -- cycle;
        \end{scope}
        
        \node[blue,font=\footnotesize] at (.85,.7) {$\ang{1,1}$ wins};
        \node[red,font=\footnotesize] at (0.43,-1.8) {$\ang{2,1}$ wins};
        \node[anchor=base] at (-.5,-2) {$\Omega_2$};
    \end{scope}
    \end{tikzpicture}
    \caption{Winners for the three-tile origami and their corresponding strips.}
    \label{fig:winning-regions-three-tile}
\end{figure}

\subsection{The return time function}

Recall that the return time function at each point $(a,b)\in\Omega_i$ is given by the slope of the winner $\ang{x,y}$ at that point:
\[t=\frac{y}{a(ax+by)}\,.\]
The level sets of these return time functions in the $a$-$b$ plane are hyperbolae of form
\[b=\frac{1}{at}-\frac{ax}{y}\,.\]
Integrating over the portion of the winning region bounded by the hyperbola corresponding to some positive $t$, once suitably normalized, gives a measure of what proportion of the Poincar{\'e} section has a return time in the interval $[0,t]$.\footnote{It is possible to simply calculate the area bounded by the return time function's level sets because the natural ergodic measure on the Poincar{\'e} section $\mu$ is equivalent to the usual Lebesgue measure on $\mathbb{R}^2$, and because the equidistribution properties of the horocycle flow on $\SL[2]{\mathbb{R}}/\SL{X,\omega}$ carry over to the return time map on the transversal. This is a nontrivial fact proved in for instance \cite{athreya16}.} Since return times are related to slope gaps, it follows that calculating this area for each positive $t$, summing over all areas of different transversal regions, and normalizing produces the CDF of the slope gap distribution.

Since winning regions are always convex polygons, the return time level sets sweep through the vertices, causing the area they bound to change in a nonsmooth way. These vertices appear as points of nonsmoothness in the final distribution.

Consider now the Poincar{\'e} section for the three-tile origami. On the sole winning region of $\Omega_1$, the return-time hyperbola
\[b=\frac{1}{at}-a\]
first touches the $(1,1)$ winning region at time $t=1$, intersecting the region's top edge at $a=1/t$. The area bounded by the hyperbola at this point is given by
\[\int_{1/t}^1\left(1-a-\frac{1}{at}+a\right)dt=1-\frac{1+\ln t}{t}\,.\]
At $t=4$, the hyperbola passes through the lower edge of the region, leading to a point of nonsmoothness. The two intersections it makes with the lower edge are at $(1/2)(1\pm\sqrt{1-4/t})$, so the area for this region is given by the previous expression minus the quantity
\[\int_{\frac{1}{2}\left(1-\sqrt{1-\frac{4}{t}}\right)}^{\frac{1}{2}\left(1+\sqrt{1-\frac{4}{t}}\right)}\left(1-2a-\frac{1}{at}+a\right)dt=\frac{1}{2}\sqrt{1-\frac{4}{x}}-\frac{2}{x}\arctanh\left(\sqrt{1-\frac{4}{x}}\right)\,.\]

Past $t=4$, the hyperbola continues to sweep out area, never capturing the entire Poincar{\'e} section component but approaching the total area of $1/2$. Altogether, the contribution to the CDF for $(1,1)$'s winning region in $\Omega_1$ is given by the piecewise function
\begin{equation}\label{eq:hall-distribution-CDF}
\renewcommand{\arraystretch}{1.5}
\begin{cases}
    0 & 0\leq t<1\,, \\
    1-\dfrac{1+\ln t}{t} & 1\leq t<4\,, \\
    1-\dfrac{1+\ln t}{t} - \dfrac{1}{2}\sqrt{1-\dfrac{4}{t}}+\dfrac{2}{t}\arctanh\left(\sqrt{1-\dfrac{4}{t}}\right) & 4\leq t\,.
\end{cases}
\end{equation}
The CDF and the areas that swept it out are shown in Figure~\ref{fig:sweeping-hyperbolae}.

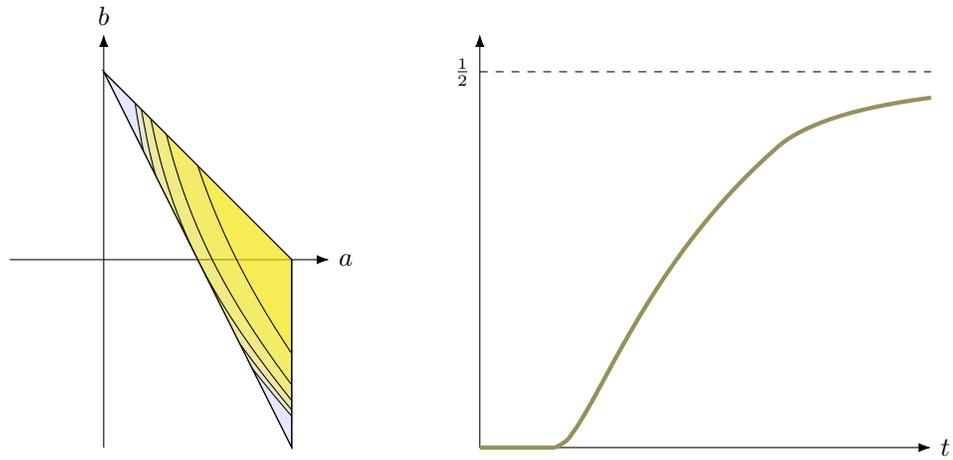
\begin{figure}
    \centering
    \begin{tikzpicture}
    \begin{scope}[xshift=2.5cm,yshift=-2.5cm,xscale=.5,yscale=10]
        \begin{scope}[on background layer]
            \draw[-Latex] (0,0) -- (12,0) node[right] {$t$};
            \draw[-Latex] (0,0) -- (0,.55);
        \end{scope}

        \begin{scope}[ultra thick,yellow!50!black]
            \draw[domain=0:2,smooth,variable=\t] (0,0) plot (\t,0);
            \draw[domain=2:8,smooth,variable=\t,samples=20] (0,0) plot (\t,{1-(2/\t)*(1+ln(\t/2))});
            \draw[domain=8:12,smooth,variable=\t,samples=20] (0,0) plot (\t,{1-(2/\t)*(1+ln(\t/2))-(1/2)*sqrt(1-(8/\t))+(2/\t)*ln((1+sqrt(1-(8/\t)))/(1-sqrt(1-(8/\t))))});
        \end{scope}

        \draw[dashed] (0,.5) node[left,font=\footnotesize] {$\frac12$} -- +(12,0);
    \end{scope}
    \begin{scope}[xshift=-2.5cm,scale=2.5]
        \begin{scope}[on background layer]
            \draw[-Latex] (-.5,0) -- (1.2,0) node[right] {$a$};
            \draw[-Latex] (0,-1) -- (0,1.2) node[above] {$b$};
        \end{scope}

        \draw[fill=blue,fill opacity=.1] (0,1) -- (1,0) -- (1,-1) -- cycle;

        \begin{scope}
        \clip (0,1) -- (1,0) -- (1,-1) -- cycle;
        \foreach \t in {6,5,4,3,2} {
            \draw[fill=yellow,fill opacity=.2,domain={1/\t}:1,smooth,variable=\a] plot ({\a},{1/(\t*\a)-\a}) -- (1,0);
        }
        \end{scope}
        
        \draw (0,1) -- (1,0) -- (1,-1) -- cycle;
    \end{scope}
    \end{tikzpicture}
    \caption{The areas bounded by level sets of $t$ correspond to the CDF of the slope gap distribution.}
    \label{fig:sweeping-hyperbolae}
\end{figure}

Turning now to $\Omega_2$, some consideration shows that the same CDF will be computed for the $(1,1)$ winning region following the same procedure. The $(2,1)$ winning region proceeds similarly, except the return time level-sets are now given by
\[b=\frac{1}{at}-2a\,.\]
A change of coordinates bypasses more tedious computation. Let $b'=b+a$, so that the $a$-$b'$ coordinate system is sheared upward by $1$ from the original plane. This transforms the $(2,1)$ winning region into the $(1,1)$ winning region and changes the return time level-sets to
\[b'=b+a=\frac{1}{at}-2a+a=\frac{1}{at}-a\,.\]
We now recognize that the $(2,1)$ winning region makes the exact same contribution to the total CDF. Since all three of the CDF contributions have the same shape, summing them and then normalizing is equivalent to normalizing just one.\footnote{However, in general the individual CDF contributions must be added together \emph{before} normalization.} We conclude that the total normalized CDF is simply twice the function given in Equation~\ref{eq:hall-distribution-CDF}.

This distribution turns out to be the CDF of the Hall distribution, which is the slope gap distribution of the simplest square-tiled surface, the one-tile square torus. The normalized pdf of the Hall distribution is plotted in Figure~\ref{fig:hall-distribution-pdf}, and its equation is given below. Notice the points of nondifferentiability at $t=1$ and $4$.
\[\begin{cases}
    0 & 0\leq t<1\,, \\
    \dfrac{\ln{t^2}}{t^2} & 1\leq t<4\,, \\
    \dfrac{\ln{t^2}}{t^2} - \dfrac{4}{t^2}\arctanh\left(\sqrt{1-\dfrac{4}{t}}\right) & 4\leq t\,.
\end{cases}\]

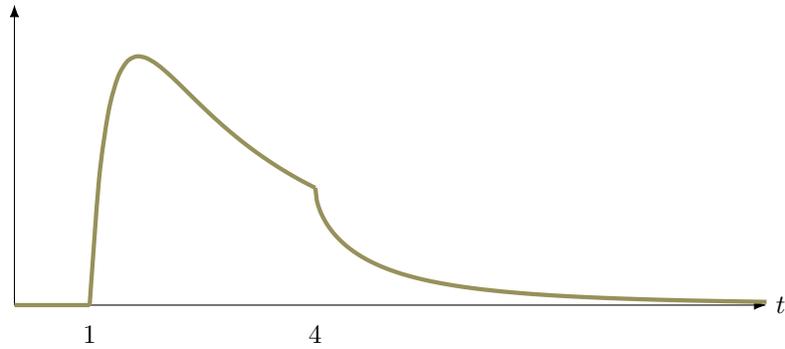
\begin{figure}
    \centering
    \begin{tikzpicture}
    \begin{scope}[on background layer]
        \draw[-Latex] (0,0) -- (10,0) node[right] {$t$};
        \draw[-Latex] (0,0) -- (0,4);
    \end{scope}

    \begin{scope}[ultra thick,yellow!50!black]
        \draw[domain=0:1,smooth,variable=\t] (0,0) plot (\t,0);
        \draw[domain=1:4,smooth,variable=\t] (0,0) plot (\t,{18*(ln(\t)/(\t^2))});
        \draw[domain=4:10,samples=300,variable=\t] (0,0) plot (\t,{18*(ln(\t)/(\t^2)-1/(\t^2)*(ln((1+sqrt(1-4/\t))/(1-sqrt(1-4/\t)))))});
    \end{scope}

    \node[anchor=base] at (1,-.5) {$1$};
    \node[anchor=base] at (4,-.5) {$4$};
    \end{tikzpicture}
    \caption{The pdf of the Hall distribution.}
    \label{fig:hall-distribution-pdf}
\end{figure}

\section{Determining winners}
\label{ch:determining-winners}

This section details an algorithm used to determine saddle-connection winners developed by \cite{alassal-et-al25}. We now provide a brief overview of the algorithm.
\begin{alg}[The SUMRY algorithm]\hfill
\begin{enumerate}
    \item Let a Poincar{\'e} section component $\Omega$ corresponding to the cusp represented by $W$ and a holonomy vector $\mathbf{v}_0=\ang{x_0,y_0}$ of $W(X,\omega)$ be given.
    \item Parametrize $\Omega$ as a triangle in the $a$-$b$ plane whose top edge lies along the line $b=1$ (Equation~\ref{eq:rotated-parametrization}), and let its top-right corner have coordinates $(a_0,1)$.
    \item For each $k\geq0$, search for vectors that win over $\mathbf{v}_k=\ang{x_k,y_k}$ in the region
    \[\set{(a,b)\in\mathbb{R}^2\,\middle\vert\,\frac{x-1}{a}<y<\frac{xy_k}{x_k}}\,,\]
    or show that none exist.
    \item By repeating step~3, find the vector $\mathbf{v}_{k+1}=\ang{x_1,y_1}$ that wins on the half-open interval
    \[I_k=\left[\frac{x_k-1}{y_k},\frac{x_{k-1}-1}{y_{k-1}}\right)\,,\]
    where by convention we take
    \[\frac{x_{-1}-1}{y_{-1}}=\frac{x_0-1}{y_0}+\alpha\,.\]
    \item By repeating step~4, partition the top edge of $\Omega$ into half-open intervals and their associated winners $(\mathbf{v}_k,I_k)$.
    \item Reconstruct the winning regions by projecting the intervals back onto strips and performing a finite number of comparisons on the interior of $\Omega$.
\end{enumerate}
\end{alg}

\subsection{A rotated perspective}

Recall from Equation~\ref{eq:poincare-parametrization} that the $i$\textsuperscript{th} connected component $\Omega_i$ of the Poincar{\'e} section of $(X,\omega)$ under the horocycle flow can be parametrized by two real parameters $a$ and $b$. Each point in $\Omega_i$ (a translation surface) is of the form $M_{a,b}C_i(X,\omega)$, where
\[M_{a,b}=\begin{bmatrix}
    a & b \\ 0 & 1/a
\end{bmatrix}\]
and $M_{a,b}C_i(X,\omega)$ has a horizontal holonomy vector of length at most $1$. Taking quotients by elements of $\SL{X,\omega}$, we see that $\Omega_i$ can be represented by the set
\[\Omega_i=\set{(a,b)\in\mathbb{R}^2\,\middle\vert\,0<a\leq 1,\frac{1-x_0a}{y_0}-\alpha_ia\leq b<\frac{1-x_0a}{y_0}}\,.\]
In the above equation, $x_0$ and $y_0$ are the components of the holonomy vector with shortest vertical component and $\alpha_i$ is the corresponding cusp width.

An equivalent parametrization, which proves more convenient, is obtained by rotating this triangle counterclockwise by a quarter-turn in the $a$-$b$ plane. This yields
\begin{equation}\label{eq:rotated-parametrization}
\Omega_i'=\set{(a,b)\in\mathbb{R}^2\,\middle\vert\,0<b\leq 1,\frac{x_0b-1}{y_0}\leq a<\frac{x_0b-1}{y_0}+\alpha_ib}\,.
\end{equation}
Figure~\ref{fig:rotated-parametrization} shows the new parametrization. In this figure and for the rest of this section we simplify notation by writing $\Omega$ for $\Omega_i'$, $C$ for $C_i$, and $\alpha$ for $\alpha_i$. 
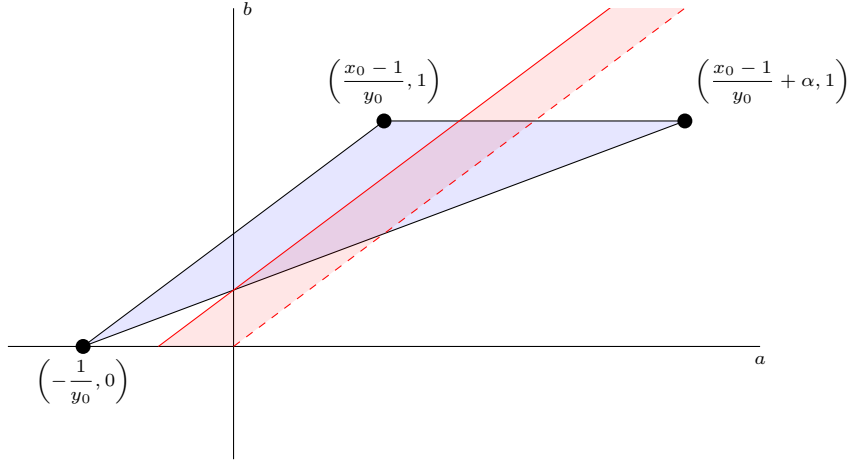
\begin{figure}
    \centering
    \begin{tikzpicture}[xscale=2,yscale=3,font=\scriptsize]
        \draw (-1.5,0) -- (3.5,0)
                (0,-.5)--(0,1.5);
        \useasboundingbox;
        
        \fill[blue,opacity=.1] (-1,0) -- (1,1) -- (3,1) -- cycle;
        \draw (-1,0) node[bullet,label={[below,yshift=-1ex]$\left(-\dfrac{1}{y_0},0\right)$}] {}
            -- (1,1) node[bullet,label={[above]$\left(\dfrac{x_0-1}{y_0},1\right)$}] {}
            -- (3,1) node[bullet,label={[above right]$\left(\dfrac{x_0-1}{y_0}+\alpha,1\right)$}] {}
            -- cycle;
        \node[below] at (3.5,0) {$a$};
        \node[right] at (0,1.5) {$b$};

        \begin{scope}
            \clip (-3.5,-1.5) rectangle (3.5,1.5);
            \fill[red,opacity=.1] (0,0) -- (10,5) -- (9.5,5) -- (-.5,0) -- cycle;
            \draw[red,dashed] (0,0) -- (10,5);
            \draw[red,xshift=-.5cm] (0,0) -- (10,5);
        \end{scope}
    \end{tikzpicture}
    \caption{The rotated parametrization $\Omega_i'$ and the strip $S_\Omega(x,y)$.}
    \label{fig:rotated-parametrization}
\end{figure}

The rotation above amounts to rearranging the parameters' roles in $M_{a,b}$, so that instead we consider $\Omega$ as the set of translation surfaces of form
\[M_{b,-a}C(X,\omega)=\begin{bmatrix}
    b & -a \\ 0 & 1/b
\end{bmatrix}C(X,\omega)\]
with a horizontal saddle connection of length at most $1$.

Now recall that to each vector $\ang{x,y}\in\Lambda(C(X,\omega))$ corresponds a \textit{candidacy strip} on which $\ang{x,y}$ could win. In the unrotated parametrization, this candidacy strip was defined by the set of values $a$, $b$ such that $M_{a,b}\ang{x,y}$ had horizontal component at most $1$ and positive vertical component. This is just the set $\set{(a,b)\in\mathbb{R}^2\mid y>0\,,0<ax+by\leq1}$. In the rotated parametrization we instead impose the constraint that $M_{b,-a}\ang{x,y}$ have horizontal component at most $1$ and positive vertical component. Since
\[\begin{bmatrix}
    b & -a \\ 0 & 1/b
\end{bmatrix}\ang{x,y}=\begin{bmatrix}
    b & -a \\ 0 & 1/b
\end{bmatrix}\begin{bmatrix}
    x \\ y
\end{bmatrix}=\begin{bmatrix}
    bx-ay \\ y/b
\end{bmatrix}\,,\]
this yields the strip
\[S_\Omega(x,y)=\set{(a,b)\in\mathbb{R}^2\mid y>0\,, 0<bx-ay\leq1}\,.\]
Note that although the steeper strip wins where two strips intersect in the unrotated parametrization, in the rotated parametrization it is the shallower strip that wins. This is because the slope of the strip's parallel sides is given by $y/x$, while the slope of a transformed vector is
\[\frac{y}{b(bx-ay)}=\left(\frac{b^2x}{y}-a\right)^{-1}\,.\]
From the preceding equation we see that the greater the slope of the strip's parallel sides, the greater the slope of the transformed vector.

With this knowledge in hand, we now prove a short lemma that significantly simplifies the problem.

\begin{lem}\label{lem:top-edge}
Let
\[E=\left\{(a,b)\,\middle\vert\,\frac{x_0-1}{y_0}\leq a\leq\frac{x_0-1}{y_0}+\alpha,b=1\right\}\]
be the top edge of $\Omega$. Every vector $\ang{x,y}$ that wins somewhere on $\Omega$ wins on an interval of $E$ (i.e., a connected subset of $E$ with nonempty interior), whose left endpoint is
\[p_{\ang{x,y}}=\left(\frac{x-1}{y},1\right)\,.\]
Moreover, if $(x-1)/y=(x_0-1)/y_0$, then $p_{\ang{x,y}}$ lies on the left edge of $\Omega$.
\end{lem}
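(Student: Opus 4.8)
The plan is to reduce the two-dimensional winning problem to a one-dimensional one on the top edge $E$, and to locate the left endpoint of the winning interval as the point where the \emph{upper} boundary of the candidacy strip $S_\Omega(x,y)$ meets $E$. First I would record the elementary computation on $E$: setting $b=1$ in the defining inequality $0<bx-ay\le 1$ of $S_\Omega(x,y)$ and using $y>0$ shows that $(a,1)\in S_\Omega(x,y)$ exactly when $\tfrac{x-1}{y}\le a<\tfrac{x}{y}$, with the left endpoint $a=\tfrac{x-1}{y}$ being precisely the value at which $bx-ay=1$. Thus $p_{\ang{x,y}}=(\tfrac{x-1}{y},1)$ is the intersection of the line $bx-ay=1$ (the upper edge of the strip) with $E$, and since a winner must in particular be a candidate, $\ang{x,y}$ cannot win anywhere on $E$ strictly to the left of $p_{\ang{x,y}}$.

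The core of the argument is a monotonicity (descent) step establishing that, if $\ang{x,y}$ wins at some point $(a_1,1)\in E$, then it wins at every $(a_0,1)$ with $\tfrac{x-1}{y}\le a_0\le a_1$; this simultaneously shows the winning set on $E$ is an interval and that its left endpoint is exactly $p_{\ang{x,y}}$. The computation above shows $\ang{x,y}$ remains a candidate throughout $[\tfrac{x-1}{y},a_1]$. Recalling that among candidates at a point of $E$ the winner is the one of smallest strip slope $y/x$, I would suppose toward a contradiction that some $\ang{x',y'}$ with $y'/x'<y/x$ beats $\ang{x,y}$ at $a_0$, so $0<x'-a_0y'\le 1$. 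Since $\ang{x,y}$ wins at $a_1$, this better-sloped competitor cannot be a candidate at $a_1$, forcing either $x'-a_1y'\le 0$ or $x'-a_1y'>1$. The first gives $a_1\ge x'/y'>x/y>a_1$, and the second gives $a_1<\tfrac{x'-1}{y'}\le a_0\le a_1$; both are absurd. Hence the winning set on $E$ is left-closed down to $p_{\ang{x,y}}$. Because $a_0=\tfrac{x-1}{y}$ satisfies $x-a_0y=1$, the point $p_{\ang{x,y}}$ is itself a candidate and, by the same argument, a winner, so the left endpoint is attained.

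It remains to pass from ``wins somewhere on $\Omega$'' to ``wins somewhere on $E$,'' and this is where I expect the real difficulty. Here I would use that each winning region $R$ is a convex polygon contained in the strip $S_\Omega(x,y)$, and examine the point $(a^\dagger,b^\dagger)\in\overline{R}$ of maximal height $b^\dagger$. Moving from this point in the direction $\ang{x,y}$ (i.e.\ along $(a,b)\mapsto(a+tx,b+ty)$) keeps $bx-ay$ constant, so $\ang{x,y}$ stays a candidate, while its slope $\tfrac{y}{b(bx-ay)}$ strictly decreases, making it only more competitive. Any competitor $\ang{x',y'}$ with $y'/x'<y/x$ that is not already a candidate at $(a^\dagger,b^\dagger)$ has $bx'-ay'$ strictly increasing along this ray, so it can enter candidacy only across its lower boundary $bx'-ay'=0$, where its slope diverges; hence it cannot overtake $\ang{x,y}$ in a neighborhood. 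Thus $\ang{x,y}$ would continue to win at heights slightly above $b^\dagger$, contradicting maximality, unless the ray immediately exits $\Omega$, that is, unless $(a^\dagger,b^\dagger)$ lies on the right edge of the triangle. The main obstacle is precisely this boundary case: because the right and left edges of $\Omega$ are identified by the parabolic $S_i=T^{\alpha_i}$, one must track the winner across the gluing and show the argument still terminates at $E$. I would handle it by transporting the situation by $S_i$ onto the left edge $\set{(a,b)\mid bx_0-ay_0=1}$ and recognizing the resulting winner as one of the boundary type appearing in the ``moreover'' clause.

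Finally, the ``moreover'' statement is immediate: if $\tfrac{x-1}{y}=\tfrac{x_0-1}{y_0}$, then $p_{\ang{x,y}}=(\tfrac{x_0-1}{y_0},1)$ is exactly the top-left vertex of the triangle $\Omega$, which is the upper endpoint of the left edge $\set{(a,b)\mid bx_0-ay_0=1}\cap\Omega$; hence $p_{\ang{x,y}}$ lies on the left edge, as claimed.
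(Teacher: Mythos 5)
Your one-dimensional work on $E$ is correct: the descent argument (if $\ang{x,y}$ wins at $(a_1,1)$ then it wins at every $(a_0,1)$ with $\frac{x-1}{y}\le a_0\le a_1$, via the two contradictions $a_1\ge x'/y'>x/y>a_1$ and $a_1<\frac{x'-1}{y'}\le a_0\le a_1$) is a clean substitute for the paper's connectedness argument, and the ``moreover'' clause is indeed immediate. The genuine gap is in step 3, the passage from ``wins somewhere in $\Omega$'' to ``wins somewhere on $E$,'' which is the actual content of the lemma, and it is a gap in two ways. First, in the right-edge case your proposed fix does not do what you need: the identification of the right and left edges of $\Omega$ is implemented by $S_i=T^{\alpha}$ acting on holonomy vectors, so the vector winning at the transported left-edge point is $T^{-\alpha}\ang{x,y}=\ang{x-\alpha y,\,y}$, \emph{not} $\ang{x,y}$. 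Any conclusion reached after transporting is a statement about a different holonomy vector, whereas the lemma is a per-vector claim (that $\ang{x,y}$ itself wins on an interval of $E$ with left endpoint $p_{\ang{x,y}}$); worse, the transported region may again top out on the right edge, so the construction must be iterated and you give no termination argument. Second, even in the favorable case $b^\dagger=1$, your ray argument only shows that the \emph{closure} of the winning region meets $E$. That does not yield a point of $E$ where $\ang{x,y}$ wins: the winner at the touching point can be a distinct, shallower vector $\ang{x',y'}$ whose strip's left edge $bx'-ay'=1$ passes exactly through that point (recall the strip is closed on that side), with your region accumulating on the point from the side $bx'-ay'>1$. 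Your descent argument needs an actual winning point $(a_1,1)$ with $a_1>\frac{x-1}{y}$ before it can produce an interval with nonempty interior, and nothing in the proposal supplies one.

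The paper closes exactly this hole with a single two-point contradiction that your proposal is missing, and which makes the rays, the boundary case, and the gluing unnecessary. Suppose $\ang{x,y}$ wins at $q\in\Omega$ and a distinct $\ang{x',y'}$ wins at $p_{\ang{x,y}}$. Then $\ang{x',y'}$ is shallower, so its strip's left edge crosses the line $bx-ay=1$ to the right of $p_{\ang{x,y}}$ and lies above the line $b=1$ to the right of $p_{\ang{x,y}}$; meanwhile the right edges of both strips pass through the origin, with that of $S_\Omega(x',y')$ the shallower. Since $q$ lies below the line $bx-ay=1$ and below $b=1$, and above the right edge of $S_\Omega(x,y)$ and above the horizontal axis, it follows that $q\in S_\Omega(x',y')$, so the shallower competitor also beats $\ang{x,y}$ at $q$ --- contradicting that $\ang{x,y}$ wins at $q$. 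Hence $\ang{x,y}$ wins at $p_{\ang{x,y}}$ itself (after verifying $p_{\ang{x,y}}\in\Omega$, which the paper does by comparing slopes with the left edge of $\Omega$), and the interval structure then follows. To salvage your approach you would have to prove outright that the highest point of a winning region can lie neither on the right edge nor at an isolated contact point of $E$; as it stands, those two configurations are exactly what your argument fails to exclude.
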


\begin{proof}
To prove the first part of the lemma, let $\ang{x,y}$ be a winner at point $q\in\Omega$. Notice that the left edge of $S_\Omega(x,y)$ is given by the line $L_{\ang{x,y}}$ with equation
\[b=\frac{y}{x}a+\frac1x\,,\]
and that this intersects with the line $b=1$ at $p_{\ang{x,y}}$. Now suppose that the distinct vector $\ang{x',y'}$ wins at $p_{\ang{x,y}}$. Then $S_\Omega(x',y')$ must contain $p_{\ang{x,y}}$, so that $p_{\ang{x,y}}$ lies below the left edge of $S_\Omega(x',y')$, $L_{\ang{x',y'}}$. Now, $p_{\ang{x,y}}$ lies on $L_{\ang{x,y}}$, but if $S_\Omega(x',y')$ wins over $S_\Omega(x,y)$ at $p_{\ang{x,y}}$, then $L_{\ang{x',y'}}$ must have a shallower slope than $L_{\ang{x,y}}$. Hence the left edges of the two strips intersect to the right of $p_{\ang{x,y}}$.

Now, $S_\Omega(x,y)$ must contain $q$, so $q$ lies under $L_{\ang{x,y}}$. Also, $q$ must be in $\Omega$, so it must lie under $E$. But $L_{\ang{x',y'}}$ lies above $L_{\ang{x,y}}$ everywhere to the left of $p_{\ang{x,y}}$ and lies above $E$ everywhere to its right. It follows that $q$ must lie under $L_{\ang{x',y'}}$.

Similarly, $q$ lies above the right edge of $S_\Omega(x,y)$ since $q\in S_\Omega(x,y)$ and $q$ lies above the horizontal axis since $q\in\Omega$. Now, the right edges of $S_\Omega(x,y)$ and $S_\Omega(x',y')$ intersect at the origin and the right edge of $S_\Omega(x',y')$ has a shallower slope than the right edge of $S_\Omega(x,y)$. Everywhere to the left of the origin, both right edges lie below the horizontal axis. It follows that $q$ must lie above the right edge of $S_\Omega(x',y')$. But now $q$ lies in $S_\Omega(x',y')$, hence $\ang{x,y}$ is not the winner at $q$, contradicting our initial assumptions. This forces us to conclude $\ang{x,y}$ wins at $p_{\ang{x,y}}$.

Next, we show that $\ang{x,y}$ wins on an interval of $E$. Let the set of points in $E$ where $\ang{x,y}$ wins be $E_{\ang{x,y}}$. Within $E$, $\ang{x,y}$ wins at all and only the points in $S_\Omega(x,y)$ that are not also in a candidacy strip $S_\Omega(\mathbf{v})$ for some holonomy vector on $\Omega$ of slope less than the slope of $\ang{x,y}$. Therefore, we can write 
\begin{equation}\label{eq:winning-interval}
E_{\ang{x,y}}=(E\cap S_\Omega(x,y))\setminus\left({\smashoperator\bigcup_{\substack{\mathbf{v}\in\Lambda\\m(\mathbf{v})<y/x}}}S_\Omega(\mathbf{v})\right)\,.
\end{equation}
The right edge of each candidacy strip $S_\Omega(\mathbf{v})$ is a line of slope $m(\mathbf{v})$ passing through the origin. All vectors $\mathbf{v}$ indexed over in the union in Equation~\ref{eq:winning-interval} have slope less than the right edge of $S_\Omega(x,y)$, so each strip $S_\Omega(\mathbf{v})$ covers an interval containing the right endpoint of $E\cap S_\Omega(x,y)$. The union of all the strips $S_\Omega(\mathbf{v})$ in turn covers an interval containing the right endpoint of $E\cap S_\Omega(x,y)$, and comparison with Equation~\ref{eq:winning-interval} shows that $E_{\ang{x,y}}$ is connected.

Next, $E_{\ang{x,y}}$ has empty interior only if it is the singleton set $\{p_{\ang{x,y}}\}$. Consider the left edges of the strips $S_\Omega(\mathbf{v})$: either at least one left edge intersects $E$ at or to the left of $p_{\ang{x,y}}$, in which case $\ang{x,y}$ does not win at $p_{\ang{x,y}}$, or no left edge intersects $E$ to the right of $p_{\ang{x,y}}$, in which case $E_{\ang{x,y}}$ contains more than just the point $p_{\ang{x,y}}$. In neither case does $E_{\ang{x,y}}$ equal $\{p_{\ang{x,y}}\}$, so we conclude $E_{\ang{x,y}}$ is connected with nonempty interior, i.e., it is an interval.

Finally, we show that $p_{\ang{x,y}}$ lies in $\Omega$. The left edge of $\Omega$ is given by the line $L_0$ with equation
\[b=\frac{y_0}{x_0}a+\frac{1}{x_0}\,,\]
which is also the left edge of $S_\Omega(x_0,y_0)$. We show that no strip whose left edge lies to the left of $L_0$ wins on the interior of $\Omega$. First, recall that by definition
\[y_0=\min_{\ang{x,y}\in\Lambda}\{y\}\,,\]
so no strip $S_\Omega(x,y)$, $\ang{x,y}\in\Lambda$, intersects the horizontal axis left of $\Omega$. Hence if $p_{\ang{x,y}}$ lies outside of $\Omega$ then $L_{\ang{x,y}}$ must be steeper than $L_0$, that is, $y/x>y_0/x_0$. But then $\ang{x_0,y_0}$ wins over $\ang{x,y}$ in a neighborhood of $L_0$ since shallower vectors win over steeper ones. Hence $p_{\ang{x,y}}$ is not the left endpoint of a winning interval, contrary to assumption---we conclude that the left endpoints of winning intervals must lie in $\Omega$.
\end{proof}

\begin{figure}
    \centering
    \begin{tikzpicture}[xscale=2,yscale=4]
        \draw (-1.2,0) -- (5.1,0)
                (0,0)--(0,1.2);
        \useasboundingbox;

        \fill[blue,opacity=.1] (-1,0) -- (1,1) -- (5,1) -- cycle;
        \node[below] at (5.1,0) {$a$};
        \node[right] at (0,1.2) {$b$};
        
        \begin{scope}
            \clip (-1.5,0) rectangle (5.5,1.2);
            \fill[red,opacity=.1] (0,0) -- (7,3) -- (6.5,3) -- (-.5,0) -- cycle;
            \draw[red,dashed] (0,0) -- (7,3);
            \draw[red,xshift=-.5cm] (0,0) -- (7,3);
            
            \fill[green,opacity=.1] (0,0) -- (5,2) -- (4.2,2) -- (-.8,0) -- cycle;
            \draw[green!50!black,dashed] (0,0) -- (5,2);
            \draw[green!50!black,xshift=-.8cm] (0,0) -- (5,2);
        \end{scope}

        \node[open bullet,inner sep=1.5pt,label={[below right,font=\scriptsize,xshift=-9pt,yshift=-3pt]{$p_{\ang{x,y}}$}}] at ({11/6},1) {};
        \node[bullet,inner sep=1.5pt,label={[above right,font=\scriptsize]{$q$}}] at ({3/3},{1/2}) {};
    \end{tikzpicture}
    \caption{Diagram of the proof of Lemma~\ref{lem:top-edge}. The steeper, red strip is $S_\Omega(x,y)$ and the shallower, green strip is $S_\Omega(x',y')$.}
    \label{fig:lemma-proof}
\end{figure}
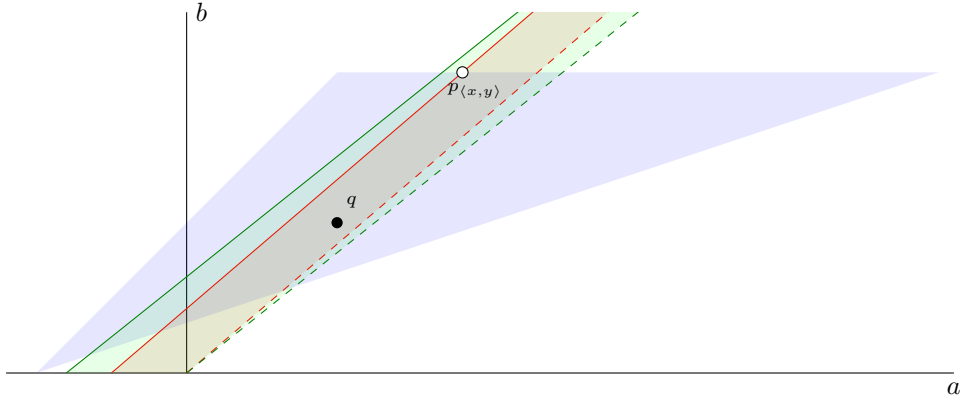

It therefore suffices to consider only which vectors win on the line $b=1$. Once all the intervals and the winners on each interval are found, it is straightforward to retrace the winning strips which correspond to each interval. These winning strips then partition $\Omega$ into winning regions, as required for the next step of the modified KSW algorithm. We now turn to consideration of how to find winning vectors.

\subsection{Bounded and unbounded regions}

As it turns out, it often suffices to have a suspected ``candidate winner'' to find the winner on a region. Just as any vector $\ang{x,y}$ has a winning strip on the $a$-$b$ plane given by the equation $0<bx-ay\leq1$, any point $(a,b)\in\Omega$ defines a ``winning strip'' in the $x$-$y$ plane explicitly given by
\[S_\Lambda(a,b)=\set{\ang{x,y}\in\mathbb{R}^2\,\middle\vert\,\frac{b}{a}x-\frac{1}{a}<y\leq\frac{b}{a}x}\,.\]
The $x$-$y$ plane can be thought of as the space $\Lambda$ of possible holonomy vectors, so $S_\Lambda(a,b)$ is the strip containing all holonomy vectors which could possibly win at the point $(a,b)$. Note that because we always consider points on the top edge $b=1$, the preceding expression simplifies to
\[S_\Lambda(a,1)=\set{\ang{x,y}\in\mathbb{R}^2\,\middle\vert\,\frac{1}{a}(x-1)<y\leq\frac{1}{a}x}\,.\]
That is, the strip containing all possible winners at $(a,1)$ is bounded by two lines of slope $1/a$ in the $x$-$y$ plane intersecting the $x$-axis at $x=0$ and $x=1$, respectively.

Now suppose $\ang{u,v}$ is a candidate winner at some point $(a,b)$ on $\Omega$. Any vector $\ang{u,v}$ which wins over $\ang{x,y}$ must have a steeper slope---that is, we require
\[\frac{v}{u}>\frac{y}{x}\,.\]
But this simply determines another region in the $x$-$y$ plane, namely
\[\set{\ang{x,y}\in\mathbb{R}^2\,\middle\vert\,0<y<\frac{v}{u}x}\,.\]
The intersection of $S_\Lambda(a,b)$ with this region is bounded if and only if the two regions are bounded by lines of different slopes, that is, if
\[\frac{b}{a}\neq\frac{v}{u}\quad\text{or equivalently if}\quad bu-av\neq0\,.\]
In this case, the region looks like a triangle and is bounded. Since the set of holonomy vectors of any translation surface is discrete, any bounded strip contains a finite number of possible winning vectors. It is then a matter of brute-force computation to determine the winning vector.

If instead $bu-av=0$, then the region looks like an unbounded infinite strip. Since $b=1$ for all points on the top edge, we obtain a parallel strip whenever $u=av$. Now let
\[T^{t}=\begin{bmatrix}
    1 & t \\ 0 & 1
\end{bmatrix}\]
for any real $t$. Notice that when $t$ is an integer this is just the $t$\textsuperscript{th} power of the shearing generator $T$ of $\SL[2]{\mathbb{Z}}$. If we shear the surface $C(X,\omega)$ by $T^{-u/v}$, the set of holonomy vectors is also sheared by $T^{-u/v}$:
\[T^{-u/v}\Lambda(C(X,\omega))=\Lambda(T^{-u/v}C(X,\omega))\,.\]
In particular, the parallel strip $S_\Lambda(a,1)$ in $x$-$y$ space is sheared into the vertical strip $0\leq x<1$. In this case, it may happen that the algorithm does not terminate.

In some cases, it is possible at this point to show no other vectors lie in this strip. Recall $C$ can be written as the product of a unit-determinant diagonal matrix $D$ and a word $W$ in $S$ and $T$. In particular, let
\[D=\begin{bmatrix}
    d & 0 \\ 0 & 1/d
\end{bmatrix}\,.\]
Then straightforward computation shows the following diagram commutes, and the set of holonomy vectors of $W(X,\omega)$ is related to the set of holonomy vectors of the sheared surface by the transformation $T^{-u/(vd^2)}=D^{-1}T^{u/v}D$.
\[
\begin{tikzcd}
W(X,\omega) \arrow[r] \arrow[d] & T^{-u/(vd^2)}W(X,\omega) \arrow[d] \\
DW(X,\omega) \arrow[r] & T^{-u/v}DW(X,\omega)
\end{tikzcd}
\]
Properties of the quantity $-u/(vd^2)$ can then help to eliminate possibilities. For instance, if $-u/(vd^2)=n$ so that $T^n\in\SL{W(X,\omega)}=\SL{X,\omega}$ then
\[T^n\Lambda(W(X,\omega))=\Lambda(T^nW(X,\omega))=\Lambda(W(X,\omega))\,.\]
Since $W(X,\omega)$ is just another square-tiled surface, it can have no holonomy vectors of horizontal component shorter than $1$. It follows that the parallel strip must be empty of other holonomy vectors.

\subsection{Partitioning the top edge}

At this point we have a strategy for dividing the top edge into intervals with winning vectors, from which winning regions can be reconstructed. Start with a guess for the winning vector $\mathbf{v}_0=\ang{x_0,y_0}$ in a small neighborhood of the top-right corner of $\Omega$, $(a_0,1)$. Note that $\mathbf{v}_0$ must be an actual holonomy vector of $W(X,\omega)$ to be a viable candidate. Use the above procedure to find the true winner $\mathbf{v}_1=\ang{x_1,y_1}$ at the top-right corner of $\Omega$. The left edge of the strip $S_\Omega(\mathbf{v}_1)$ intersects the top edge at the point with $a$-coordinate $a_1=(x_1-1)/y_1$. Next, using $\mathbf{v}_1$ as the new candidate, find the actual winning vector in a small neighborhood of $(a_1,1)$, say $\mathbf{v}_2$. This then defines a new point of intersection $a_2$. At the $k$\textsuperscript{th} step, given $\mathbf{v}_{k-1}$, compute $a_k$ and the next winning vector $\mathbf{v}_k$ using the above procedure. Repeat until the entire top edge is covered by intervals.

Although the possibility of unbounded strips limits the algorithm's general effectiveness, the procedure described above is sufficient for the next section, in which we calculate the slope gap distribution for a certain 10-tile square-tiled-surface.
\section{Calculating the ten-tile origami's slope gap distribution}
\label{ch:calculations}

In this section we calculate the slope gap distribution of the balanced ten-tile origami labeled $(X,\omega)$ in Figure~\ref{fig:ten-tile-STS}. In the next section we prove that the resultant distribution is not a sum of scaled Hall distributions, unlike every other slope gap distributed computed for square-tiled surfaces until now.

\subsection{\texorpdfstring{Poincar{\'e}}{Poincaré} section components}

We automated the method detailed in \S\,\ref{sec:par-gens-sl2z-orbits} using the SageMath package \texttt{surface-dynamics} (see Appendix~\ref{app:code} for the code). The output of the code shows that the ten-tile origami's Veech group has four cusps, represented by the four $\SL[2]{\mathbb{Z}}$ words
\[T^5\,,\quad STS^{-1}\,,\quad (T^3ST^2S)T(T^3ST^2S)^{-1}\,,\quad (T^3ST^2)T^5(T^3ST^2)^{-1}\,.\]
Each cusp corresponds to a separate connected component of the ten-tile origami's Poincar{\'e} section and a separate surface, a ``cusp-relative'' of the original origami, given by $C_i(X,\omega)$ as in the KSW algorithm (see \S\,\ref{sec:par-gens-sl2z-orbits}). For brevity, we denote these cusp-relatives by $(X_1,\omega)$, $(X_2,\omega)$, $(X_3,\omega)$, and $(X_4,\omega)$, and their Poincar{\'e} section components by $\Omega_1$, $\Omega_2$, $\Omega_3$, and $\Omega_4$, respectively (see Figure~\ref{fig:ten-tile-STS}).

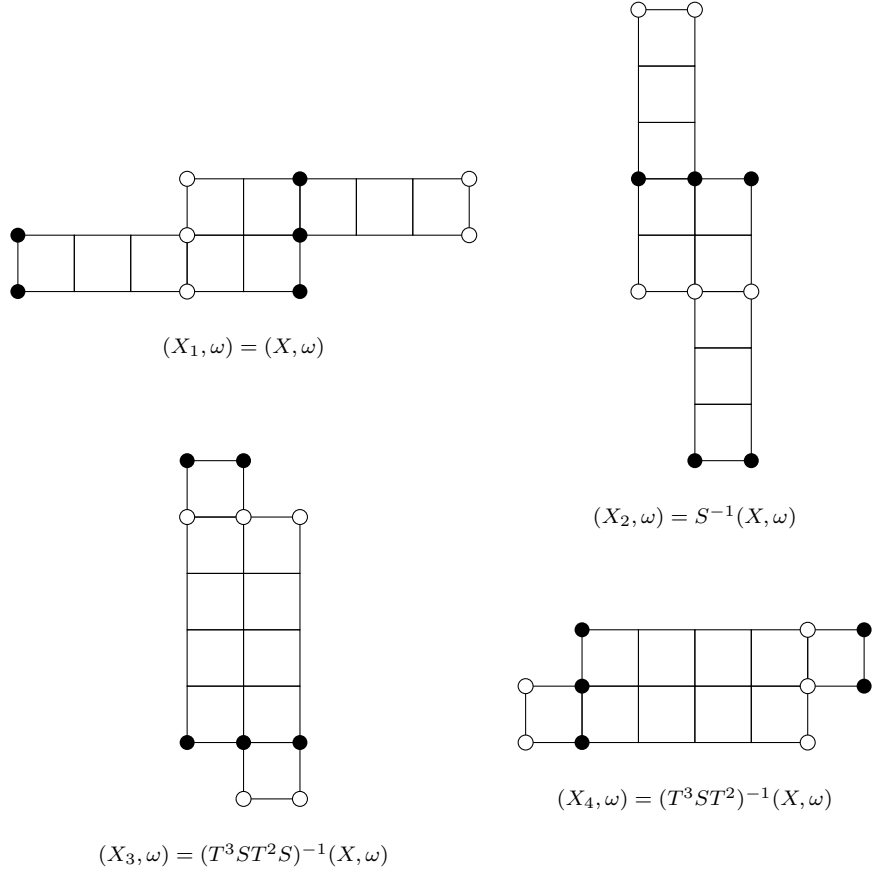
\begin{figure}
    \centering
    \begin{tikzpicture}[font=\footnotesize,scale=1.5]
    \begin{scope}[scale=.5]
    \foreach \x in {1,2,3,4,5,-4,-5,-6,-7,-8} {
        \ifnum\x>0
            \draw (\x,0) rectangle +(1,1);
        \else
            \draw ({-\x},1) rectangle +(1,1);
        \fi
    }
    \foreach \location in {(1,0),(1,1),(6,0),(6,1),(6,2)} {
        \node[bullet] at \location {};
    }
    \foreach \location in {(4,0),(4,1),(4,2),(9,1),(9,2)} {
        \node[circ] at \location {};
    }

    \node at (5,-1) {$(X_1,\omega)=(X,\omega)$};
    \end{scope}

    \begin{scope}[xshift=4cm,scale=.5,rotate around={90:(5,1)}]
    
    \foreach \x in {1,2,3,4,5,-4,-5,-6,-7,-8} {
        \ifnum\x>0
            \draw (\x,0) rectangle +(1,1);
        \else
            \draw ({-\x},1) rectangle +(1,1);
        \fi
    }
    \foreach \location in {(1,0),(1,1),(6,0),(6,1),(6,2)} {
        \node[bullet] at \location {};
    }
    \foreach \location in {(4,0),(4,1),(4,2),(9,1),(9,2)} {
        \node[circ] at \location {};
    }
    \node at (0,1) {$(X_2,\omega)=S^{-1}(X,\omega)$};
    \end{scope}

    \begin{scope}[xshift=1cm,yshift=-4cm,scale=.5,rotate around={90:(3,1)}]
    
    \foreach \x in {1,2,3,4,5,-2,-3,-4,-5,-6} {
        \ifnum\x>0
            \draw (\x,0) rectangle +(1,1);
        \else
            \draw ({-\x},1) rectangle +(1,1);
        \fi
    }
    \foreach \location in {(1,0),(1,1),(6,0),(6,1),(6,2)} {
        \node[circ] at \location {};
    }
    \foreach \location in {(2,0),(2,1),(2,2),(7,1),(7,2)} {
        \node[bullet] at \location {};
    }
    \node at (0,1) {$(X_3,\omega)=(T^3ST^2S)^{-1}(X,\omega)$};
    \end{scope}
    
    \begin{scope}[xshift=4.5cm,yshift=-4cm,scale=.5]
    
    \foreach \x in {1,2,3,4,5,-2,-3,-4,-5,-6} {
        \ifnum\x>0
            \draw (\x,0) rectangle +(1,1);
        \else
            \draw ({-\x},1) rectangle +(1,1);
        \fi
    }
    \foreach \location in {(1,0),(1,1),(6,0),(6,1),(6,2)} {
        \node[circ] at \location {};
    }
    \foreach \location in {(2,0),(2,1),(2,2),(7,1),(7,2)} {
        \node[bullet] at \location {};
    }
    \node at (4,-1) {$(X_4,\omega)=(T^3ST^2)^{-1}(X,\omega)$};
    \end{scope}
    \end{tikzpicture}
    \caption{The balanced ten-tile origami and its cusp-relatives. Opposite edges are identified.}
    \label{fig:ten-tile-STS}
\end{figure}

\subsection{Finding winners}

In this section we calculate the winning vectors on each cusp-relative. The calculations all follow the algorithm outlined in Section~\ref{ch:determining-winners}. Recall that for the sake of exposition, the Poincar{\'e} section components were rotated a quarter-turn counterclockwise in that section from their orientation in the rest of this paper.

The general procedure for each Poincar{\'e} section component proceeds as follows (in the unrotated Section~\ref{ch:introduction} orientation). We determine the winning regions by their intersection with the right edge of the Poincar{\'e} section component $a=1$. Beginning from the lower-right corner, we find the winning vector on that interval. We do so by choosing an initial guess $\ang{u_0,v_0}$ from the set of holonomy vectors on the surface $\Lambda(X,\omega)$, then iterating the SUMRY algorithm until no other vectors in $\Lambda(X,\omega)$ are left in the region of possible winners. Recall that if the current guess is $\ang{u_i,v_i}$, the region in the plane of all possible holonomy vectors is given by
\[\set{\ang{x,y}\in\mathbb{R}^2\,\middle\vert\,x>0,\,y\geq0,\,-\frac{1}{b}(x-1)<y<\frac{v_i}{u_i}x}\]
(where again we are using the ``unrotated'' orientation of the $a$-$b$ plane). Once we identify the winning vector $\ang{u_w,v_w}$, we also know the upper endpoint of its winning interval along the right edge is $b=-(u_w-1)/v_w$, which is also the lower endpoint of the next winning interval. Iterating this process, we proceed up the right edge until we reach the topmost point on the right edge, at which point we have finished partitioning the component.

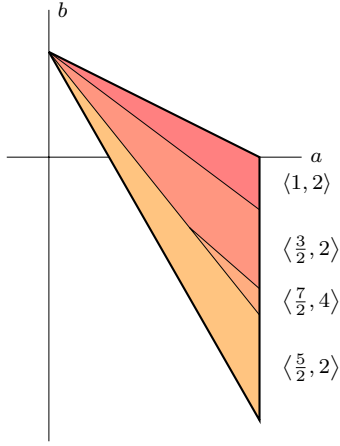
\begin{figure}
    \centering
    \begin{tikzpicture}[scale=2.8,font=\footnotesize]
    \begin{scope}[on background layer]
        \draw[ultra thin] (-.2,0) -- (1.2,0) node[right] {$a$}
         (0,0.7) node[right] {$b$} -- (0,-1.35);
    \end{scope}
    
    \definecolorseries{redseries}{hsb}{last}{red!50!white}{red!10!orange!50!white}
    \foreach \s in {0}{
    \resetcolorseries[5]{redseries}
    \begin{scope}[yslant={-1.25*\s}]
        \begin{scope}[on background layer]
            \fill[redseries!!+] (0,0.5) -- (1,0) -- (1,-.25) -- (0,0.5);
            \fill[redseries!!+] (0,0.5) -- (1,-.25) -- (1,-.625) -- ({2/3},{-1/3}) -- (0,0.5);
            \fill[redseries!!+] ({2/3},{-1/3}) -- (1,-.625) -- (1,-.75) -- ({2/3},{-1/3});
            \fill[redseries!!+] (0,0.5) -- (1,-.75) -- (1,-1.25) -- (0,0.5);
        \end{scope}

        \draw[thick] (0,0.5) -- (1,0) -- (1,-1.25) -- (0,0.5);
        \draw[ultra thin]
            (0,0.5) -- (1,-.75)
            (0,0.5) -- (1,-.25)
            ({2/3},{-1/3}) -- (1,-.625);
        
        \node[right=5pt] at (1,-.125) {$\ang{\pgfmathparse{1+2.5*\s}\pgfmathifisint{\pgfmathresult}{\pgfkeys{/pgf/number format/.cd,frac,frac whole=true}\pgfmathprintnumber{\pgfmathresult}}{\pgfkeys{/pgf/number format/.cd,frac,frac whole=false}\pgfmathprintnumber{\pgfmathresult}},2}$};
        \node[right=5pt] at (1,-.4375) {$\ang{\pgfmathparse{1.5+2.5*\s}\pgfmathifisint{\pgfmathresult}{\pgfkeys{/pgf/number format/.cd,frac,frac whole=true}\pgfmathprintnumber{\pgfmathresult}}{\pgfkeys{/pgf/number format/.cd,frac,frac whole=false}\pgfmathprintnumber{\pgfmathresult}},2}$};
        \node[right=5pt] at (1,-.6875) {$\ang{\pgfmathparse{3.5+5*\s}\pgfmathifisint{\pgfmathresult}{\pgfkeys{/pgf/number format/.cd,frac,frac whole=true}\pgfmathprintnumber{\pgfmathresult}}{\pgfkeys{/pgf/number format/.cd,frac,frac whole=false}\pgfmathprintnumber{\pgfmathresult}},4}$};
        \node[right=5pt] at (1,-1) {$\ang{\pgfmathparse{2.5+2.5*\s}\pgfmathifisint{\pgfmathresult}{\pgfkeys{/pgf/number format/.cd,frac,frac whole=true}\pgfmathprintnumber{\pgfmathresult}}{\pgfkeys{/pgf/number format/.cd,frac,frac whole=false}\pgfmathprintnumber{\pgfmathresult}},2}$};
    \end{scope}
    }
    \end{tikzpicture}
    \caption{Partition of $\Omega_1$}
    \label{fig:x1-omega}
\end{figure}

\begin{table}
    \centering
    \renewcommand{\arraystretch}{1.5}
    \[\begin{array}{@{}ccccc@{}}
    \toprule
    \textbf{Endpoint} & \textbf{Winning vector} & \textbf{Lower boundary} & \textbf{Upper boundary} \\
    \midrule
    -\frac{5}{4} & \ang{\frac{5}{2},2} & \frac{4}{5}(x-1) & \frac{4}{5}x & * \\
    -\frac{3}{4} & \ang{\frac{7}{2},4} & \frac{4}{3}(x-1) & \frac{8}{7}x & \\
    -\frac{5}{8} & \ang{\frac{3}{2},2} & \frac{8}{5}(x-1) & \frac{4}{3}x & \\
    -\frac{1}{4} & \ang{1,2} & 4(x-1) & 2x & \\
    \bottomrule
    \end{array}\]
    \caption{Calculations for winning regions on $\Omega_1$}
    \label{tab:x_1_omega}
\end{table}

Table~\ref{tab:x_1_omega} presents the partition of $\Omega_1$ into winning regions. Notice that the shortest horizontal vector component on $(X_1,\omega)$ is $2$, so that it does not contain a short horizontal saddle connection (one of length at most $1$)---we scale it by the unit-determinant diagonal matrix
\[D_{1/2}=\begin{bmatrix}
    1/2 & 0 \\ 0 & 2
\end{bmatrix}\]
so that the transformed surface $D_{1/2}(X_1,\omega)$ does. This transformation, which ensures the surface contains a short horizontal saddle connection, is necessary to apply the KSW algorithm to $(X_1,\omega)$. Notice that the action of any transformation $A$ on the original surface $(X,\omega)$ corresponds naturally to a transformation $D_{1/2}AD_{1/2}^{-1}$ on the transformed surface. In particular, the horizontal shear $T^s$ transforms into
\[\begin{bmatrix}
    1/2 & 0 \\ 0 & 2
\end{bmatrix}\begin{bmatrix}
    1 & s \\ 0 & 1
\end{bmatrix}\begin{bmatrix}
    2 & 0 \\ 0 & 1/2
\end{bmatrix}=\begin{bmatrix}
    1 & s/4 \\ 0 & 1
\end{bmatrix}=T^{s/4}\,.\]
Further, it is straightforward to verify that
\[\SL{D_{1/2}(X,\omega)}=D_{1/2}\,\SL{X,\omega}\,D_{1/2}^{-1}\,,\]
or in words, the Veech group of the transformed surface is conjugate to the Veech group of original surface by the transformation. Finally, note that the transformation reduces the width of the cusp from $5$ to $5/4$.

Where the rightmost column is unstarred, the region of possible winners is bounded, and manually checking the lattice of possible holonomy vectors shows that no other vector in $\Lambda(X_1,\omega)$ can win over the listed candidate. Where the rightmost column is starred, the region of possible winners is an unbounded strip. However, in each such instance we may transform $D_{1/2}(X_1,\omega)$ by $T^{-5/4}$, which according to the correspondence previously established lies in $\SL{D_{1/2}(X_1,\omega)}$ (recall that $T^{-5}\in\SL{X_1,\omega}$). It follows that under this transformation $\Lambda(D_{1/2}(X_1,\omega))$ is mapped invertibly onto itself, so that no vector which does not appear in the transformed possible-winner region can appear in the untransformed possible-winner region. But the transformed region is
\[\set{\ang{x,y}\in\mathbb{R}^2\,\middle\vert\,0<x<1}\,,\]
so that any vector in this region must have horizontal component $1/2$ (that is, it must cross exactly one tile horizontally). But inspection of $(X_1,\omega)$ shows that no vector can cross exactly one tile horizontally, since the minimum horizontal distance between cone points is two tiles. Hence in each case we have found the winning vector.

\begin{figure}
    \centering
    \begin{tikzpicture}[scale=2.8,font=\footnotesize]
    \begin{scope}
    \begin{scope}[on background layer]
        \draw[ultra thin] (-.2,0) -- (1.2,0) node[right] {$a$}
         (0,1.1) node[right] {$b$} -- (0,-1);
        \fill[blue!70!cyan!50!white] (0,0.5) -- (1,0) -- (1,{-1/3}) -- (0.5,0) -- (0,0.5);
        \fill[blue!50!cyan!50!white] (0.5,0) -- (1,{-1/3}) -- (1,-0.5) -- (0.5,0);
        \fill[blue!30!cyan!50!white] (0,0.5) -- (1,-0.5) -- (1,-1) -- (0,0.5);
    \end{scope}

    \draw[thick] (0,0.5) -- (1,0) -- (1,-1) -- (0,0.5);
    \draw[ultra thin] (0,0.5) -- (1,-0.5);
    \draw[ultra thin] (0.5,0) -- (1,{-1/3});

    \node[right=5pt] at (1,-.25) {$\ang{1,2}$};
    \node[right=5pt] at (1,{-5/12}) {$\ang{2,3}$};
    \node[right=5pt] at (1,-.75) {$\ang{2,2}$};
    \end{scope}

    \begin{scope}[xshift=2.5cm]
    \begin{scope}[on background layer]
        \draw[ultra thin] (-.2,0) -- (1.2,0) node[right] {$a$}
         (0,1.1) node[right] {$b$} -- (0,-1);
        \fill[green!40!white] (0,1) -- (1,0) -- (1,-1) -- (0,1);
    \end{scope}

    \draw[thick] (0,1) -- (1,0) -- (1,-1) -- (0,1);
    \node[right=5pt] at (1,-.5) {$\ang{1,1}$};
    \end{scope}
    \end{tikzpicture}
    \caption{Partitions of $\Omega_2$ (left) and $\Omega_3$ (right)}
    \label{fig:x2-x3-omega}
\end{figure}
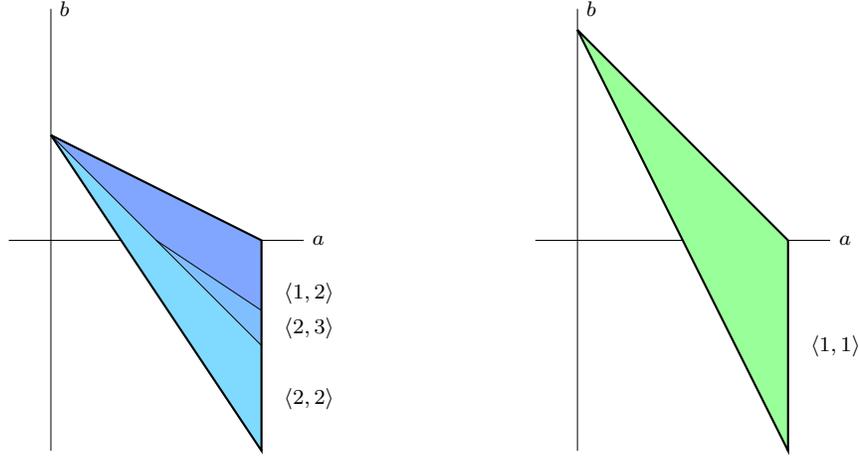

\begin{table}
    \centering
    \renewcommand{\arraystretch}{1.5}
    \[\begin{array}{@{}ccccc@{}}
    \toprule
    \textbf{Endpoint} & \textbf{Winning vector} & \textbf{Lower boundary} & \textbf{Upper boundary} \\
    \midrule
    -1 & \ang{2,2} & x-1 & x & * \\
    -\frac12 & \ang{2,3} & 2(x-1) & \frac32x & \\
    -\frac13 & \ang{1,2} & 3(x-1) & 2x & \\
    \midrule
    -1 & \ang{1,1} & x-1 & x-1 & * \\
    \bottomrule
    \end{array}\]
    \caption{Calculations for winning regions on $\Omega_2$, $\Omega_3$}
    \label{tab:x_2_x_3_omega}
\end{table}

The three remaining cusp-relatives each have a short horizontal saddle connection, so we do not scale them. It follows that, taking each tile to have unit width, every holonomy vector on these surfaces has integer components.

Two of the proposed winners for $(X_2,\omega)$ and $(X_3,\omega)$ have unbounded regions of possible winners. However, it is possible to show no points in the integer lattice $\mathbb{Z}^2$ lie in these regions, hence no holonomy vectors lie in these regions. This is because the region of possible winners always takes the form
\[\frac{p}{q}(x-1)<y<\frac{p}{q}x\,,\]
where $p$, $q$ are coprime integers. Notice that if $\ang{x,y}$ satisfies this inequality, then so does $\ang{x+pn,y+qn}$ for any integer $n$, so we need only consider vectors of horizontal component at most $p$, which gives a bounded region to check. In each case for $(X_2,\omega)$ and $(X_3,\omega)$, this bounded region is empty of holonomy vectors.

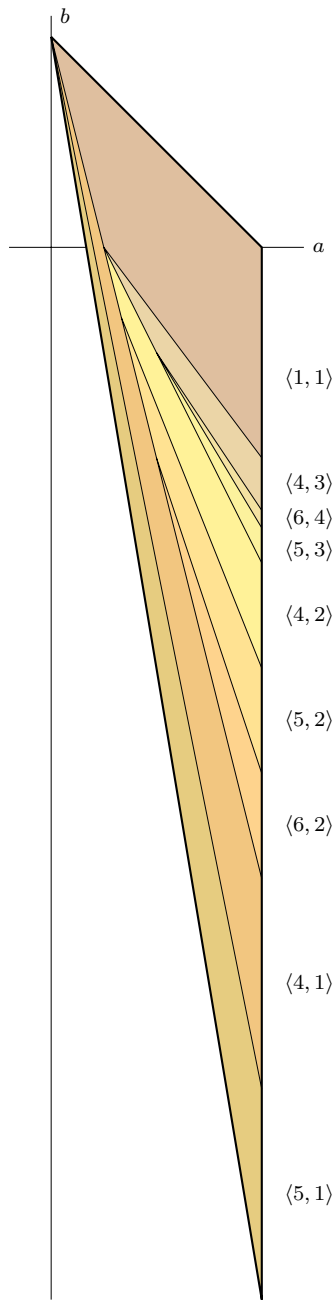
\begin{figure}
    \centering
    \begin{tikzpicture}[scale=2.8,font=\footnotesize]
    \begin{scope}[on background layer]
        \draw[ultra thin] (-.2,0) -- (1.2,0) node[right] {$a$}
         (0,1.1) node[right] {$b$} -- (0,-5);
        \fill[brown!50!white] (0,1) -- (1,0) -- (1,-1) -- (0.25,0) -- (0,1);
        \fill[yellow!30!brown!50!white] (0.25,0) -- (1,-1) -- (1,-1.25) -- (.5,-.5) -- (0.25,0);
        \fill[yellow!60!brown!50!white] (.5,-.5) -- (1,-1.25) -- (1,{-4/3}) -- (.5,-.5);
        \fill[yellow!90!brown!50!white] (.5,-.5) -- (1,{-4/3}) -- (1,-1.5) -- (.5,-.5);
        \fill[yellow!90!orange!50!white] (.25,0) -- (1,-1.5) -- (1,-2) -- ({1/3},{-1/3});
        \fill[yellow!60!orange!50!white] ({1/3},{-1/3}) -- (1,-2) -- (1,-2.5) -- (.5,-1) -- ({1/3},{-1/3});
        \fill[yellow!30!orange!50!white] (.5,-1) -- (1,-2.5) -- (1,-3) -- (.5,-1);
        \fill[orange!90!green!50!white] (0,1) -- (1,-3) -- (1,-4) -- (0,1);
        \fill[orange!80!green!50!white] (0,1) -- (1,-4) -- (1,-5) -- (0,1);
    \end{scope}

    \draw[thick] (0,1) -- (1,0) -- (1,-5) -- (0,1);
    \draw[ultra thin]
        (0,1) -- (1,-3)
        (0.25,0) -- (1,-1)
        (0.25,0) -- (.5,-.5)
        (.5,-.5) -- (1,-1.25)
        (.5,-.5) -- (1,-{4/3})
        (.5,-.5) -- (1,-1.5)
        ({1/3},{-1/3}) -- (1,-2)
        (.5,-1) -- (1,-2.5)
        (0,1) -- (1,-4)
    ;

    \node[right=5pt] at (1,-.625) {$\ang{1,1}$};
    \node[right=5pt] at (1,-1.125) {$\ang{4,3}$};
    \node[right=5pt] at (1,-{31/24}) {$\ang{6,4}$};
    \node[right=5pt,yshift=-2pt] at (1,-{17/12}) {$\ang{5,3}$};
    \node[right=5pt] at (1,-1.75) {$\ang{4,2}$};
    \node[right=5pt] at (1,-2.25) {$\ang{5,2}$};
    \node[right=5pt] at (1,-2.75) {$\ang{6,2}$};
    \node[right=5pt] at (1,-3.5) {$\ang{4,1}$};
    \node[right=5pt] at (1,-4.5) {$\ang{5,1}$};
    \end{tikzpicture}
    \caption{Partition of $\Omega_4$}
    \label{fig:x4-omega}
\end{figure}

\begin{table}
    \centering
    \renewcommand{\arraystretch}{1.5}
    \[\begin{array}{@{}ccccc@{}}
    \toprule
    \textbf{Endpoint} & \textbf{Winning vector} & \textbf{Lower boundary} & \textbf{Upper boundary} \\
    \midrule
    -5 & \ang{5,1} & \frac{1}{5}(x-1) & \frac{1}{5}x & * \\
    -4 & \ang{4,1} & \frac{1}{4}(x-1) & \frac{1}{4}x & * \\
    -3 & \ang{6,2} & \frac{1}{3}(x-1) & \frac{1}{3}x & * \\
    -\frac{5}{2} & \ang{5,2} & \frac{2}{5}(x-1) & \frac{2}{5}x & * \\
    -2 & \ang{4,2} & \frac{1}{2}(x-1) & \frac{1}{2}x & * \\
    -\frac{3}{2} & \ang{5,3} & \frac{2}{3}(x-1) & \frac{3}{5}x & \\
    -\frac{4}{3} & \ang{6,4} & \frac{3}{4}(x-1) & \frac{2}{3}x & \\
    -\frac{5}{4} & \ang{4,3} & \frac{4}{5}(x-1) & \frac{3}{4}x & \\
    -1 & \ang{1,1} & x-1 & x & * \\
    \bottomrule
    \end{array}\]
    \caption{Calculations for winning regions on $\Omega_4$}
    \label{tab:x_4_omega}
\end{table}

A similar observation holds for all but one of the parallel regions in $(X_4,\omega)$: the region of possible winners for $\ang{5,2}$ contains vectors of the form $\ang{3+5n,1+2n}$ for $n\in\mathbb{Z}$. However, inspection reveals that none of these are holonomy vectors on $(X_4,\omega)$: the horizontal spacing between cone points is $1$ and $4$, and these must alternate as a vector travels right from a starting cone point. Hence no vector on $(X_4,\omega)$ can have horizontal component congruent to $2$ or $3$ modulo $5$. In particular, no holonomy vector on $(X_4,\omega)$ has form $\ang{3+5n,1+2n}$, since every such vector has horizontal component congruent to $3$ modulo $5$.

\subsection{Return time calculations}

Integrating over each of the regions above and summing the resulting contributions produces the pdf $f(t)$, given by the equation\footnote{Code implementing this calculation can be found in Appendix~\ref{app:code}.}
\begin{equation}\label{eq:total-pdf}
\begin{gathered}
\frac{33t^2}{8}f(t)=\\
\begin{cases}
    0 & 0\leq t < 1, \\
    4\ln{t} & 1\leq t < 2, \\
    12\ln{t}-8\ln{2} & 2\leq t < 3, \\
    15\ln{t}-8\ln{2}-3\ln{3} & 3\leq t < 4, \\
    16\ln{t}-10\ln{2}-3\ln{3}-8\,\theta\left(\frac{t}{4}\right) & 4\leq t < \frac{16}{3}, \\
    16\ln{t}-10\ln{2}-3\ln{3}-8\,\theta\left(\frac{t}{4}\right)-4\,\theta\left(\frac{3t}{16}\right) & \frac{16}{3}\leq t < 6, \\
    12\ln{t}-8\ln{2}+\ln{3}-8\,\theta\left(\frac{t}{4}\right)-4\,\theta\left(\frac{t}{6}\right) & 6\leq t < 8, \\
    10\ln{t}-2\ln{2}-\ln{3}-8\,\theta\left(\frac{t}{4}\right)-12\theta\left(\frac{t}{8}\right) & 8\leq t < 9, \\
    10\ln{t}-4\ln{2}-\ln{3}-8\,\theta\left(\frac{t}{4}\right)-8\,\theta\left(\frac{t}{8}\right) & 9\leq t < \frac{32}{3}, \\
    10\ln{t}-4\ln{2}-\ln{3}-8\,\theta\left(\frac{t}{4}\right)-8\,\theta\left(\frac{t}{8}\right)-4\,\theta\left(\frac{3t}{32}\right) & \frac{32}{3}\leq t < 12, \\
    9\ln{t}-4\ln{2}-8\,\theta\left(\frac{t}{4}\right)-8\,\theta\left(\frac{t}{8}\right)-4\,\theta\left(\frac{t}{12}\right) & 12\leq t < 16, \\
    9\ln{t}-4\ln{2}-\ln{3}-8\,\theta\left(\frac{t}{4}\right)-8\,\theta\left(\frac{t}{8}\right)-2\,\theta\left(\frac{t}{12}\right) & t>16,
\end{cases}
\end{gathered}
\end{equation}
where
\[\theta(t)=\arctanh\sqrt{1-\frac{1}{t}}\,.\]
We plot this pdf in Figure~\ref{fig:ten-tile-distribution}.

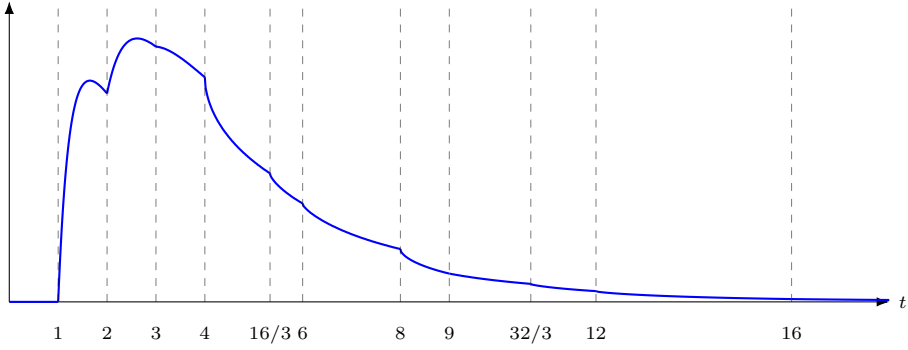
\begin{figure}
    \centering
    \begin{tikzpicture}[
        xscale = 0.65,
        declare function={
            arctanh(\x) = 0.5*ln((1+\x)/(1-\x));
            T(\x) = arctanh(sqrt(1-1/\x));
        },
        font=\scriptsize
    ]
    \begin{scope}[on background layer]
        \draw[-Latex] (0,0) -- (18,0) node[right] {$t$};
        \draw[-Latex] (0,0) -- (0,4);
    \end{scope}
    
    \begin{scope}[thick,blue,yscale=4,samples=100]
        \draw[domain=0:1,smooth,variable=\t] (0,0) plot (\t,0);
        \draw[domain=1:2,smooth,variable=\t] (0,0) plot (\t,{\t^(-2)*4*ln(\t)});
        \draw[domain=2:3,smooth,variable=\t] (0,0) plot (\t,{\t^(-2)*(12*ln(\t)-8*ln(2))});
        \draw[domain=3:4,smooth,variable=\t] (0,0) plot (\t,{\t^(-2)*(15*ln(\t)-8*ln(2)-3*ln(3))});
        \draw[domain=4:{16/3},smooth,variable=\t] (0,0) plot (\t,{\t^(-2)*(16*ln(\t)-10*ln(2)-3*ln(3)-8*T(\t/4))});
        \draw[domain={16/3}:6,smooth,variable=\t] (0,0) plot (\t,{\t^(-2)*(16*ln(\t)-10*ln(2)-3*ln(3)-8*T(\t/4)-4*T(3*\t/16))});
        \draw[domain=6:8,smooth,variable=\t] (0,0) plot (\t,{\t^(-2)*(12*ln(\t)-8*ln(2)+ln(3)-8*T(\t/4)-4*T(\t/6))});
        \draw[domain=8:9,smooth,variable=\t] (0,0) plot (\t,{\t^(-2)*(10*ln(\t)-2*ln(2)-ln(3)-8*T(\t/4)-12*T(\t/8))});
        \draw[domain=9:{32/3},smooth,variable=\t] (0,0) plot (\t,{\t^(-2)*(10*ln(\t)-4*ln(2)-ln(3)-8*T(\t/4)-8*T(\t/8))});
        \draw[domain={32/3}:12,smooth,variable=\t] (0,0) plot (\t,{\t^(-2)*(10*ln(\t)-4*ln(2)-ln(3)-8*T(\t/4)-8*T(\t/8)-4*T(3*\t/32))});
        \draw[domain=12:16,smooth,variable=\t] (0,0) plot (\t,{\t^(-2)*(9*ln(\t)-4*ln(2)-8*T(\t/4)-8*T(\t/8)-4*T(\t/12))});
        \draw[domain=16:18,smooth,variable=\t] (0,0) plot (\t,{\t^(-2)*(9*ln(\t)-4*ln(2)-ln(3)-8*T(\t/4)-8*T(\t/8)-2*T(\t/12))});
    \end{scope}
    
    \foreach\x in {1,2,3,4,{16/3},6,8,9,{32/3},12,16}{
        \draw[dashed,opacity=.5] (\x,0) -- +(0,4);
        \node[anchor=base] at (\x,-.5) {$\x$};
    }
    
    \end{tikzpicture}
    \caption{The pdf of the ten-tile distribution.}
    \label{fig:ten-tile-distribution}
\end{figure}

\subsection{Verification}
In this section we prove some results that independently confirm Equation~\ref{eq:total-pdf} is correct. We begin by proving a result about the holonomy vectors on the ten-tile origami.

\begin{thm}
A vector is a holonomy vector on the ten-tile origami if and only if it takes one of the forms $\ang{5n,k}$, $\ang{5n+2,k}$, and $\ang{5n+3,k}$, where $n$, $k$ are integers.
\end{thm}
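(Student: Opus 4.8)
The plan is to reduce the whole problem to a single $\mathbb{Z}/5\mathbb{Z}$‑valued invariant that detects cone points. First I would record that the ten‑tile origami $(X,\omega)$ is a translation cover of the square torus $\mathbb{T}^2$ in which \emph{every} vertex lies over the marked point $0\in\mathbb{T}^2$; hence every holonomy vector (every developed displacement between two cone points along a straight geodesic) lies in $\mathbb{Z}^2$, and it remains only to decide which integer vectors occur. The engine is the tile‑coloring $c\colon\{1,\dots,10\}\to\mathbb{Z}/5\mathbb{Z}$ assigning to each tile its horizontal position mod $5$. Reading the identifications off Figure~\ref{fig:ten-tile-origami}, the right permutation $\sigma_R=(1\ 2\ 3\ 4\ 5)(6\ 7\ 8\ 9\ 10)$ increments $c$ by $1$ while the up permutation $\sigma_U=(4\ 6)(5\ 7)$ preserves $c$. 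Equivalently, any closed loop returns to its starting tile, so its net number of rightward tile‑crossings — i.e.\ its horizontal holonomy — is $\equiv0\pmod 5$; therefore ``horizontal coordinate mod $5$'' descends to a well‑defined function $f\colon X\to\mathbb{R}/5\mathbb{Z}$, and along any geodesic $f$ changes by the horizontal holonomy reduced mod $5$.

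The key computation is to evaluate $f$ on the eight vertices of $X$. The $\bullet$ representatives in the figure sit at horizontal positions $\equiv1$ and the $\circ$ representatives at $\equiv4\pmod5$, while the six regular vertices occur at positions $\equiv0,2,3$. Thus $f$ takes the values $\{1,4\}$ exactly on the two cone points and the values $\{0,2,3\}$ on the regular vertices, so among vertices the condition $f\in\{1,4\}$ holds \emph{if and only if} the vertex is a cone point.

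With this dichotomy both directions are short. For the forward implication, if $\ang{x,y}$ is the holonomy of a geodesic from a cone point $P$ to a cone point $Q$, then $x\equiv f(Q)-f(P)\pmod5$, and $f(P),f(Q)\in\{1,4\}$ forces $x\in\{1,4\}-\{1,4\}=\{0,2,3\}\pmod 5$. For the converse, given any $y\in\mathbb{Z}$ and any $x\equiv0,2,3\pmod5$, I would choose the starting cone point by type so that $f(P)+x\in\{1,4\}$ (start at $\circ$ when $x\equiv2$, at $\bullet$ when $x\equiv3$, at either when $x\equiv0$). Developing the straight segment of displacement $\ang{x,y}$ from $P$, its endpoint is the vertex $Q$ over $0$ with $f(Q)=f(P)+x\in\{1,4\}$, hence a cone point; crucially $f(Q)$ is independent of $y$, so every integer $y$ is admissible. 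This realizes $\ang{x,y}$ as a holonomy vector.

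The main obstacle is the bookkeeping behind the second step: proving that $f$ is genuinely well defined and that the partition of vertex values into $\{1,4\}$ (cone) and $\{0,2,3\}$ (regular) is exact, which rests on correctly reconstructing all edge identifications and verifying the eight corner‑incidences at each cone point. A secondary point worth flagging is the convention: the geodesic produced in the converse may pass through a cone point in its interior — for instance $\ang{0,2}$ threads the cone point at $(4,1)$ — so the statement must read ``holonomy vector'' as a developed displacement between cone points \emph{without} requiring the connecting geodesic to be free of interior cone points; it is exactly this convention that allows every $y$ to appear.
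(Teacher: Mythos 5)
Your proof is correct, and it runs on the same engine as the paper's: the two cone points sit at horizontal positions congruent to $1$ and $4$ modulo $5$ (in the coordinates of Figure~\ref{fig:ten-tile-origami}), the six regular vertices at positions congruent to $0$, $2$, $3$, all vertices sit at integer heights, and both implications follow by developing straight segments. What differs is the packaging. The paper makes the invariant visible geometrically, by cutting and pasting to the $5\times 2$ ``pair of pants'' representation of Figure~\ref{fig:pair-of-pants}, where the cone points lie on three dashed vertical lines, and then reads both directions off the picture; you stay in the original representation and instead prove that $x\bmod 5$ descends to a well-defined function $f$ on the surface, by checking that $\sigma_R$ increments the tile residue while $\sigma_U=(4\ 6)(5\ 7)$ preserves it. Your version costs some permutation bookkeeping but buys rigor: the paper's ``notice that the endpoint rests on one of the three dashed blue lines'' is precisely the well-definedness of $f$, which you actually establish. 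Your closing caveat is also a genuine catch rather than pedantry: under the paper's own definition of saddle connection (no cone points in the interior), a vector such as $\ang{0,2}$ is not the holonomy of any saddle connection, since every vertical separatrix of this origami meets a cone point at distance $1$; so the theorem is literally true only when ``holonomy vector'' is read as a developed displacement between cone points. The paper's proof tacitly uses the same loose convention---it, too, only checks that the endpoint is a cone point---and nothing downstream is harmed, because an interior cone point merely truncates a segment without changing its slope, so the slope sets used in the verification section are unaffected; but the convention deserves to be stated explicitly, as you do.
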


\begin{proof}
We may cut the bottom left three tiles in Figure~\ref{fig:ten-tile-origami} and paste them onto the bottom-right of the origami to obtain the diagram in Figure~\ref{fig:pair-of-pants}. Notice that the bottom edges of the three tiles in the top right are not glued to the top edges of the three tiles in the bottom left.

Now, starting from the bottom leftmost point marked with $\circ$ in this representation, notice that the endpoint of any vector with a horizontal component of form $5n$ and $5n+2$ for integer $n$ rests on one of the three dashed blue lines. A cone point lies at every integer vertical distance above the starting point, so any vector of form $\ang{5n,k}$ or $\ang{5n+2,k}$ will be a holonomy vector for the ten-tile origami. A similar argument starting from the bottommost point marked with $\bullet$ in this representation shows that any vector of form $\ang{5n+3,k}$ will be a holonomy vector as well.

Conversely, suppose we have a holonomy vector of the ten-tile origami. Notice that the horizontal distance between any two cone points is of the form $5n$, $5n+2$, or $5n+3$ for some integer $n$, and that the vertical distance between any two cone points is an integer. It follows that the given vector has the desired form.
\end{proof}

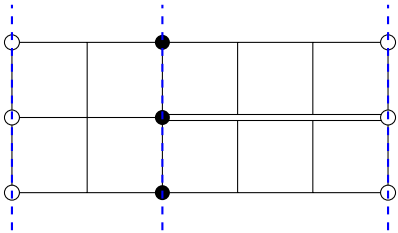
\begin{figure}
    \centering
    \begin{tikzpicture}
    \draw (0,0) grid (5,2);
    \draw[fill=white] (2,.96) rectangle (5,1.04);
    \foreach\y in {0,1,2}{
        \foreach\x in {0,5}{
            \node[circ] at (\x,\y) {};
        }
        \node[bullet] at (2,\y) {};
    }

    \draw[dashed,thick,blue] (0,-.5) -- (0,2.5)
                        (2,-.5) -- (2,2.5)
                        (5,-.5) -- (5,2.5);
    \end{tikzpicture}
    \caption{An equivalent representation of the ten-tiled origami.}
    \label{fig:pair-of-pants}
\end{figure}

Armed with this result, it is straightforward to compute all holonomy vectors with maximum component at most any given $R$ and the accompanying slope gaps. Figure~\ref{fig:congruence-histogram} shows a histogram for $R=5000$. Notice the excellent agreement with the predicted distribution.

\begin{figure}
    \centering
    \includegraphics[width=\linewidth]{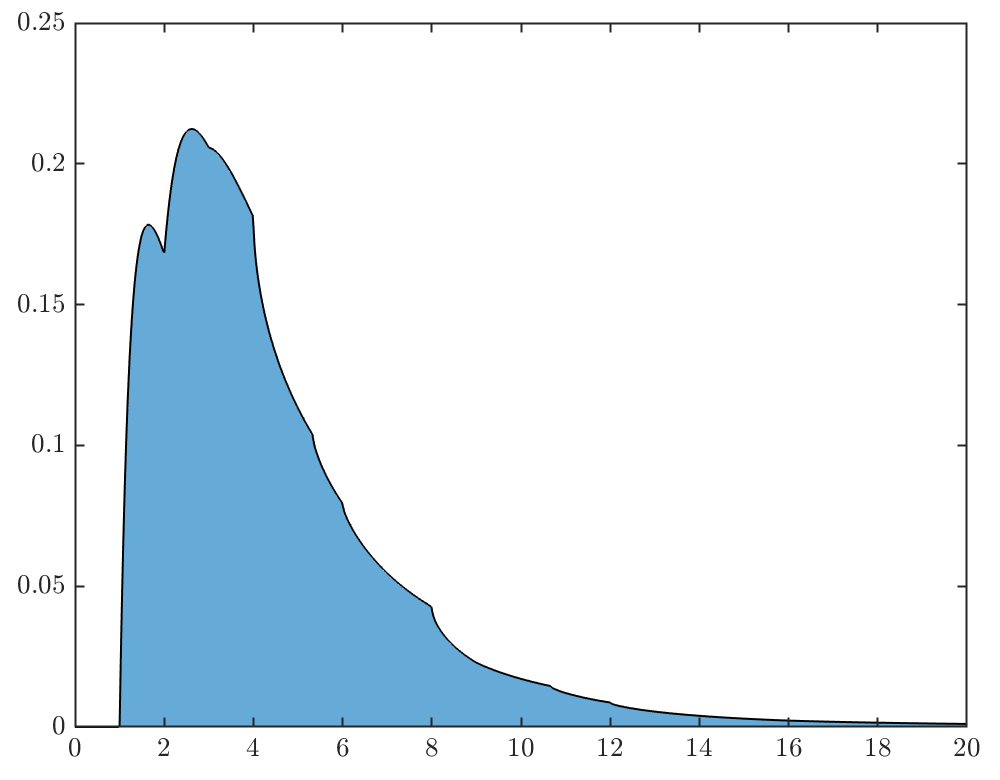}
    \caption{A histogram experimentally confirming the calculated distribution. All vectors with maximum component $R$ were selected; only the range from $t=0$ to $20$ is shown, and $100\,000$ bins are used. See Appendix~\ref{app:code} for the code.}
    \label{fig:congruence-histogram}
\end{figure}

Finally, it turns out that integrating the return time over the Poincar{\'e} section should yield the hyperbolic volume of the Veech group's fundamental domain.\footnote{This is because, outside a set of measure $0$, the entire $\SL[2]{\mathbb{R}}$-orbit of $(X,\omega)$ can be realized as a suspension space with roof function $R(a,b)$ over the Poincar\'e section. For more details, see~\cite{athreya-cheung14}.} Integrating the return time over the Poincar{\'e} section yields\footnote{Code implementing this calculation can be found in Appendix~\ref{app:code}.}
\[\int_\Omega R(a,b)\,dA=2\pi^2=12\left(\frac{\pi^2}{6}\right)\,,\]
which accords with expectation given that the Veech group of the ten-tile origami has index~$12$ in $\SL[2]{\mathbb{Z}}$ and the hyperbolic volume of the modular surface  is known to be $\frac{\pi^2}{6}$ \cite{athreya-cheung14}. The above results provide independent empirical and theoretical confirmation of the calculated distribution.

\subsection{The ten-tile origami and the Hall distribution}

Recall that the Hall distribution has pdf of the form
\[
h(t)=
\begin{cases}
    0 & 0\leq t<1\,, \\
    \dfrac{\ln{t^2}}{t^2} & 1\leq t<4\,, \\
    \dfrac{\ln{t^2}}{t^2} - \dfrac{4}{t^2}\arctanh\left(\sqrt{1-\dfrac{4}{t}}\right) & 4\leq t\,.
\end{cases}\]
A plot is repeated in Figure~\ref{fig:hall-distribution-lines}. The central insight we employ in showing that the distribution of the ten-tile origami cannot be a finite sum of Hall distributions is that the Hall distribution has a ``signature'' in the $1$-$4$ spacing of its nonsmooth points that cannot be erased through adding other Hall distributions. We show this formally below.

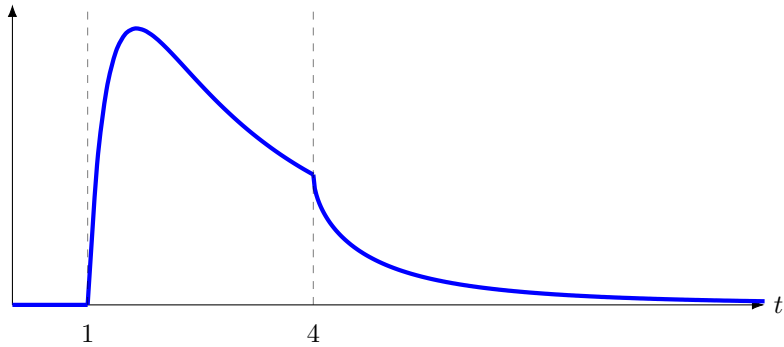
\begin{figure}
    \centering
    \begin{tikzpicture}
    \begin{scope}[on background layer]
        \draw[-Latex] (0,0) -- (10,0) node[right] {$t$};
        \draw[-Latex] (0,0) -- (0,4);
    \end{scope}

    \begin{scope}[ultra thick,blue,yscale=20]
        \draw[domain=0:1,smooth,variable=\t] (0,0) plot (\t,0);
        \draw[domain=1:4,smooth,variable=\t] (0,0) plot (\t,{(ln(\t)/(\t^2))});
        \draw[domain=4:10,samples=300,variable=\t] (0,0) plot (\t,{(ln(\t)/(\t^2)-1/(\t^2)*(ln((1+sqrt(1-4/\t))/(1-sqrt(1-4/\t)))))});
    \end{scope}

    \foreach\x in {1,4}{
        \draw[dashed,opacity=.5] (\x,0) -- +(0,4);
    }

    \node[anchor=base] at (1,-.5) {$1$};
    \node[anchor=base] at (4,-.5) {$4$};
    \end{tikzpicture}
    \caption{The pdf of the Hall distribution.}
    \label{fig:hall-distribution-lines}
\end{figure}

\begin{figure}
    \centering
    \begin{tikzpicture}
    \begin{scope}
        
    \clip (0,0) rectangle (10,4);
    
    \begin{scope}[on background layer]
        \draw[-Latex] (0,0) -- (10,0) node[right] {$t$};
        \draw[-Latex] (0,0) -- (0,4);
    \end{scope}

    \begin{scope}[thick]
        \draw[domain=0:0.5,smooth,variable=\t] (0,0) plot (\t,0);
        \draw[domain=0.5:2,smooth,variable=\t] (0,0) plot (\t,{15*(ln(2*\t)/(4*\t^2))});
        \draw[domain=2:8,smooth,samples=300,variable=\t] (0,0) plot (\t,{15*(ln(2*\t)/(4*\t^2))-15/(4*\t^2)*(ln((1+sqrt(1-2/\t))/(1-sqrt(1-2/\t)))))+20*ln(\t/2)/(\t^2)});
        \draw[domain=8:10,smooth,variable=\t] (0,0) plot (\t,{15*(ln(2*\t)/(4*\t^2))-15/(4*\t^2)*(ln((1+sqrt(1-2/\t))/(1-sqrt(1-2/\t)))))+20*ln(\t/2)/(\t^2)-20/(\t^2)*(ln((1+sqrt(1-8/\t))/(1-sqrt(1-8/\t)))))});
    \end{scope}

    \begin{scope}[ultra thick,red,dotted,yscale=5,xscale=2]
        \draw[domain=0:1,smooth,variable=\t] (0,0) plot (\t,0);
        \draw[domain=1:4,smooth,variable=\t] (0,0) plot (\t,{(ln(\t)/(\t^2))});
        \draw[domain=4:10,samples=300,variable=\t] (0,0) plot (\t,{(ln(\t)/(\t^2)-1/(\t^2)*(ln((1+sqrt(1-4/\t))/(1-sqrt(1-4/\t)))))});
    \end{scope}

    \begin{scope}[ultra thick,blue,dashed,yscale=15,xscale=.5]
        \draw[domain=0:1,smooth,variable=\t] (0,0) plot (\t,0);
        \draw[domain=1:4,smooth,variable=\t] (0,0) plot (\t,{(ln(\t)/(\t^2))});
        \draw[domain=4:20,samples=300,variable=\t] (0,0) plot (\t,{(ln(\t)/(\t^2)-1/(\t^2)*(ln((1+sqrt(1-4/\t))/(1-sqrt(1-4/\t)))))});
    \end{scope}
    \end{scope}

    \foreach\x in {.5,2,8}{
        \draw[dashed,opacity=.5] (\x,0) -- +(0,4);
    }

    \node[anchor=base] at (.5,-.5) {$\tau_2$};
    \node[anchor=base] at (2,-.5) {$\tau_1=4\tau_2$};
    \node[anchor=base] at (8,-.5) {$4\tau_1$};
    \end{tikzpicture}
    \caption{Visual representation of the sum of two Hall distributions (dotted and dashed lines: summands, solid line: sum). No relative scaling removes the nonsmooth points.}
    \label{fig:hall-distribution-sum}
\end{figure}
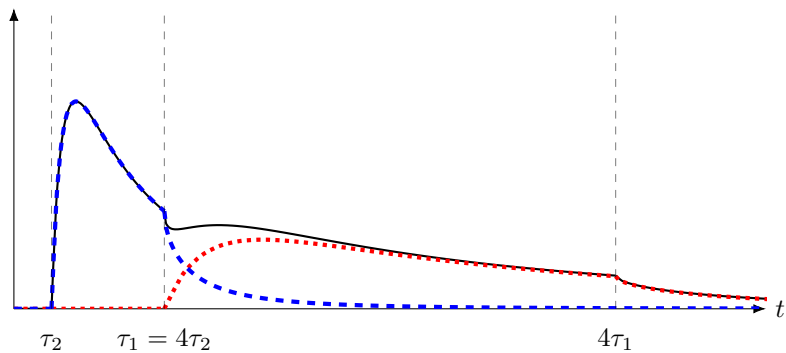

Let $\partial_\pm$ denote the right- and left-handed derivatives (i.e., right- and left-sided limits of the difference quotient) of $h$ at $x$, respectively. Direct computation shows
\begin{align*}
    \partial_-h(1) &= 0\,, & \partial_+h(1) &= 2\,; \\
    \partial_-h(4) &= \frac{1-4\ln{2}}{32}\,, & \partial_+h(4) &= -\infty\,.
\end{align*}
We use this to prove the following theorem.
\begin{thm}
Let the distribution $S$ given by
\[S(t)=\sum_{i=1}^n \alpha_i\,h_i(t)\,,\quad h_i(t)=h\left(\frac{t}{\tau_i}\right)\]
be a nonempty sum of $n$ scaled Hall distributions. Then
\begin{enumerate}
    \item The set $T$ of $t$-values at which $S$ is nonsmooth is nonempty, and
    \item For each $\tau\in T$, $\tau/4\in T$ or $4\tau\in T$.
\end{enumerate}
\end{thm}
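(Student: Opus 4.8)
The plan is to pin down the nonsmooth set of each scaled summand and then argue that, with the weights $\alpha_i$ taken positive (we may discard any zero coefficients, since $S$ is an honest sum of densities), the individual nonsmoothness can never cancel. Because $h_i(t)=h(t/\tau_i)$ is a pure horizontal rescaling of $h$, it is smooth except at the two images $t=\tau_i$ and $t=4\tau_i$ of the nonsmooth points $1$ and $4$ of $h$, and the chain rule transports the one-sided derivative data verbatim up to the factor $1/\tau_i$. Thus at $t=\tau_i$ the jump is finite and strictly positive, $\partial_+h_i(\tau_i)-\partial_-h_i(\tau_i)=2/\tau_i$, while at $t=4\tau_i$ the left derivative is the finite number $\frac{1-4\ln 2}{32\tau_i}$ but the right derivative is $-\infty$. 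I would record these two facts as the ``signature'' of a Hall summand.

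Next I would use that the jump of $\partial S$ at any point $t_0$ equals the sum, over exactly those summands nonsmooth at $t_0$, of their individual jumps, since smooth summands contribute nothing; hence the candidate nonsmooth points of $S$ all lie in $\bigcup_i\{\tau_i,4\tau_i\}$. The crucial step is the non-cancellation claim, that in fact every $\tau_i$ and every $4\tau_i$ belongs to $T$. At a point equal to some $4\tau_i$, at least one summand forces $\partial_+S=-\infty$ while $\partial_-S$ is a finite sum, so the point is automatically nonsmooth. At a point equal to some $\tau_i$, either some summand has $4\tau_j=\tau_i$ (again forcing $\partial_+S=-\infty$), or else the only contributing summands are those with $\tau_j=\tau_i$, whose jumps $2\alpha_j/\tau_j$ are all strictly positive and so cannot sum to zero. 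In every case the two one-sided derivatives differ, giving $T=\bigcup_i\{\tau_i,4\tau_i\}$.

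With this characterization both parts fall out immediately. Part (1) holds because the sum is nonempty, so some $\tau_i$ exists and lies in $T$. For part (2), any $\tau\in T$ equals some $\tau_i$, whence $4\tau=4\tau_i\in T$, or some $4\tau_i$, whence $\tau/4=\tau_i\in T$. The main obstacle is exactly the non-cancellation step, which is the whole content of the theorem: one must rule out that a cleverly weighted combination of shifted Hall signatures flattens out a nonsmooth point. Two features make this impossible: the sign-definiteness of the finite jumps (all positive multiples of $2$, so with positive $\alpha_i$ they cannot interfere destructively) and the infinite right derivative at the ``$4$'' point, which no finite combination of the remaining terms can neutralize. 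I expect the only care needed is bookkeeping when several summands are simultaneously nonsmooth at one point, but in each such coincidence the argument above still applies verbatim.
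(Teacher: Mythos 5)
Your proof is correct and takes essentially the same approach as the paper: both arguments rest on the identical one-sided-derivative signature of $h$ at $t=1$ (finite, strictly positive jump) and $t=4$ (finite left derivative, $-\infty$ right derivative), together with the observation that with positive weights these features cannot cancel in a sum. If anything, your direct characterization $T=\bigcup_i\{\tau_i,4\tau_i\}$ is organized more completely than the paper's version, which proves (1) by contradiction after reducing to distinct $\tau_i$ and then settles (2) with the brief remark that the same argument ``applies equally to any $\tau_i$.''
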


\begin{proof}
Notice that we may assume without loss of generality that $\tau_i\neq\tau_j$ for distinct indices $i\neq j$. This is because if $\tau_i=\tau_j$, then $h_i=h_j$ and
\[\alpha_i\,h_i(t)+\alpha_j\,h_j(t)=(\alpha_i+\alpha_j)h_i(t)\,.\]
Proceeding in this way and relabeling the $\alpha_i$ at the end reduces to the case of distinct $\tau_i$, which we assume from this point on.

If $T$ is empty then $S$ is smooth. If $n=1$ this is absurd since $h(t)$ is not smooth; hence assume $n>1$. Since $h_1$ is nonsmooth at $t=\tau_1$, some other term of $S$, say $h_2$, must also be nonsmooth at $t=\tau_1$, or else $S$ itself would be nonsmooth at $t=\tau_1$. Since $h_2$ is nonsmooth at $t=\tau_2$, $4\tau_2$ and by assumption $\tau_1\neq\tau_2$, we conclude $\tau_1=4\tau_2$. By the assumed uniqueness of the $\tau_i$, this in turn implies $\tau_1/\tau_i\neq1$, $4$ for $i\geq3$.

Now if $S$ is smooth at $\tau_1$ then $S'(\tau_1)$ exists, so that $\partial_-S(\tau_1)=\partial_+S(\tau_1)$. Direct computation shows
\begin{align*}
\partial_-S(\tau_1)&=\partial_-h_1(\tau_1)+\partial_-h_2(\tau_1)+\sum_{i=3}^n\partial_-h_i(\tau_1)\\
&=\partial_-h(1)+\partial_-h(4)+\sum_{i=3}^n\partial_-h\left(\frac{\tau_1}{\tau_i}\right)\\
&=\frac{1-4\ln2}{32}+\sum_{i=3}^n\partial_-h\left(\frac{\tau_1}{\tau_i}\right)\\
&>-\infty\,,\end{align*}
where the finiteness of the last sum in the penultimate line follows from the observation that $h$ is smooth everywhere on its domain except for at $1$ and $4$, and that $\tau_1/\tau_i\neq1$, $4$ for $i\geq3$. Similarly,
\begin{align*}
\partial_+S(\tau_1)&=\partial_+h_1(\tau_1)+\partial_+h_2(\tau_1)+\sum_{i=3}^n\partial_+h_i(\tau_1)\\
&=\partial_+h(1)+\partial_+h(4)+\sum_{i=3}^n\partial_+h\left(\frac{\tau_1}{\tau_i}\right)\\
&=2-\infty+\sum_{i=3}^n\partial_+h\left(\frac{\tau_1}{\tau_i}\right)\\
&=-\infty\,,\\
\end{align*}
so $S'(\tau_1)$ does not exist, contrary to the assumption $S$ is smooth. Having arrived at a contradiction we must conclude $T$ is nonempty, proving (1).

The argument above applies equally to any $\tau_i$, showing that nonsmooth points in Hall distributions cannot cancel in the sum. Since every Hall distribution with a nonsmooth point at $t=\tau$ has also a nonsmooth point at either $t=4\tau$ or $t=\tau/4$, this proves (2).

\end{proof}

\subsection{Conclusion}

We have shown that the ten-tile origami has a distribution which is not a finite sum of Hall distributions. This is the first example known to us of such a slope gap distribution, and suggests that there is no simple relationship between the slope gap distribution of an origami and the slope gap distributions of its component tiles. Since every origami is a branched cover of the one-tile square torus, this shows that the slope gap distribution of a translation covering cannot in general be reduced in a trivial way to the distribution of the base surface.

Throughout the course of calculations, most of the non-Hall-like pieces of the distribution seemed to arise from winning regions which did not intercept the vertical axis. It is possible that all non-Hall pieces arise from these winning regions, whose origins are themselves not obvious. Other questions our work raises include whether there is some classification of square-tiled surfaces that predicts whether their slope-gap distribution is a sum of Halls, and whether there are any restrictions on what kinds of slope-gap distributions can exist for square-tiled surfaces. We leave investigation of these issues to future research.

\appendix

 \section{Code}
\label{app:code}

\subsection{Cusp finding script (SageMath)}
The following code was adapted from code written by Jordan Grant and Jo O'Harrow.

\begin{minted}[breaklines]{sage}
from surface_dynamics import *
import numpy as np
text_gens=[] # declare a list to store the generators as their text labels
origami_gens=[] # declare a list to store the generators with their origamis
gens=[] # declare a list to store generators
text_pars=[] # declare a list to store the parabolic generators as their text labels
pars=[] # declare a list to store parabolic generators
s2 = np.matrix([[0, -1], # the 'S' rotation matrix
               [1, 0]])
l = np.matrix([[1, 1], # the 'T' shear matrix
               [0, 1]])

o = Origami('(1,2,3,4,5)(6,7,8,9,10)','(1,9)(2,10)') # define origami
# o = Origami('(1,2)(3,4)','(2,3)')
# o = Origami('(1,2)(3,4)','(1)(2,3)(4)') # define origami
T = o.teichmueller_curve() # teichmueller curve object
S = T.orbit_graph(s2_edges=True, s3_edges=False, l_edges=True, r_edges=False,
            vertex_labels=True) # creates orbit graph with only S and T edges
C = T.cusp_representatives() # find pairs (origami,cusp width) in each cusp
for vert in S.vertices(): # iterating over the vertices of the orbit graph
    if o.is_isomorphic(vert): # check if the original origami is isomorphic to the vertex
        o = vert # if so, rename the original origami to the isomorphic one
S.plot() # plot the orbit graph
for cusp in C: # for each cusp representative
    seq = [] # declare a list to store a 'sequence' of matrices
    text_seq = [] # declare a list to store the names of the matrices
    I = S.all_paths_iterator(starting_vertices=[o], ending_vertices=[cusp[0]], simple=True, max_length=None, trivial=False, use_multiedges=False, report_edges=True, labels=True)
    # iterate over all nonempty paths starting from o and ending on the origami of the cusp representative, never returning to the same vertex, and report its edges with their labels
    J = next(I) # run the iterator and report the edges
    for i in range(len(J)): # for each edge
        if (J[i][-1]) == 'l': # if that edge is labeled 'T'
            seq.append(np.matrix([[1, 1], # add the matrix for T to seq
               [0, 1]]))
            text_seq.append("T") # add the sequence "T" to text_seq
        elif (J[i][-1]) == 's2': # if that edge is labeled 'S'
            seq.append(np.matrix([[0, -1], # add the matrix for S to seq
               [1, 0]]))
            text_seq.append("S") # add the sequence "S"
    mat = np.identity(2) # declare a matrix
    text_mat = '' # declare an empty sequence
    origami_mat = o # initialize an origami to the original origami
    for i in range(len(seq)):
        mat = mat * seq[i] # right-multiply all the matrices onto the identity
        text_mat = text_mat + text_seq[i] # concatenate all the sequences onto the empty sequence
        if text_seq[i] == 'S':
            origami_mat = origami_mat.horizontal_symmetry().mirror()
        elif text_seq[i] == 'T':
            origami_mat = origami_mat.horizontal_twist()
    gens.append(mat) # add this product to the list of generators
    text_gens.append(text_mat) # add the sequence to the text list of generators
    origami_gens.append(origami_mat.to_standard_form()) # add the origami (in standard form) to the list of origamis
for i in range(len(gens)):
    pars.append(gens[i] * np.linalg.matrix_power(l,C[i][1]) * np.linalg.inv(gens[i]))
    text_pars.append('(' + text_gens[i] + ')T^' + str(C[i][1]) + '(' + text_gens[i] + ')^-1')

gamma = o.veech_group()
gamma.ncusps()

print('\n----------------------\n')

for i in range(len(pars)):
    print(text_pars[i])
    print('')
    print(pars[i])
    print('')
    print(origami_gens[i])
    print('\n----------------------\n')

for cusp in C: # C returns pairs (origami,cusp width)
    print(cusp[0]) # prints the origami representative
    print('')
    print(cusp[1]) # prints the cusp width
    print('')
    print(o.is_isomorphic(cusp[0]))
    cusp[0].horizontal_symmetry().show() # draws the cusp; note that surface_dynamics uses 'right' and 'up' permutations by default, but the horizontal_symmetry draws the 'up' permutations as 'down' permutations to account for personal taste
    print('\n----------------\n')
\end{minted}

\subsection{Cone point finder and holonomy vector checker (SageMath)}
\begin{minted}[breaklines]{sage}
# initialize origami
# origami = ('(1,2,3)(4,5)','(2,4)(3,5)')
 origami = ('(1,2,3,4,5)(6,7,8,9,10)','(1,9)(2,10)')
# origami = ('(1,2,3,4,5)(6,7,8,9,10)','(2,6)(3,7)(4,8)(5,9)')
right = Permutation(origami[0]) # right permutation
up = Permutation(origami[1]) # up permutation
left = right.inverse() # left permutation
down = up.inverse() # down permutation

perms = (left,down,right,up)

n = max(right.size(),up.size()) # number of tiles in origami
perms = [SymmetricGroup(n)(perm) for perm in perms] # "normalize" all permutations by adding 1-cycles back in
left = perms[0]
down = perms[1]
right = perms[2]
up = perms[3]
print(perms)

# cone point checker
# the idea is that we only need to check lower-left-corner representatives of cone points
# we start a small distance to the right of the representative, facing upwards (q = 0)
# then we make a quarter-turn around the representative and go one tile to the left, facing left (q = 1)
# another quarter turn brings us one tile down, facing down (q = 2)
# every four turns, check if the tile we're on is one we've been on already
# then, if q > 4, we've found a cone point

t_list = list(range(1,n+1)) # initial list of cone point representatives to check
cone_pts = set() # set of tiles whose lower-left corners are cone points
for t in t_list:
    print('\nPath:')
    t_init = t # initial tile we started on
    t_path = set() # tiles we land on
    q = 0 # number of quarter-turns around the cone pt rep
    while q==0 or not(mod(q,4) == 0 and t == t_init):
        print(t)
        t = perms[mod(q,4)](t) # update the tile we're on according to what direction we're facing
        q += 1
        if mod(q,4) == 0:
            t_path.add(t)
    if q > 4: # if we've made more than a full turn (i.e., we found a cone pt)
        for tile in t_path:
            if tile in t_list:
                t_list.remove(tile) # take this tile out of the list, we don't have to check it again
                cone_pts = cone_pts.union(t_path)
    print('\nCone point?',q > 4,'\nRepresentatives:',t_path)

print('\n','Cone point representatives:',cone_pts)

# holonomy vector checker

# holonomy vector, assumes both components are positive
hol_vec = (7,2)
n = hol_vec[0]
k = hol_vec[1]
m = k/n

# set tile size
w = 1 # tile width
h = 1 # tile height

# initialize tile number
for t in cone_pts:
    t_init = t
    print('\nStarting tile:',t)
    print('------------------------------------')

    # initialize coordinates within tile
    x = 0
    y = 0

    # initialize total distances crossed
    x_tot = 0
    y_tot = 0

    print('x\t y\t x_tot\t y_tot\t t') # print table headings

    # while we are not finished traversing the vector
    while x_tot < n and y_tot < k:
        # ensure x, y are in [0,w) x [0,h)
        x = w*frac(x/w)
        y = h*frac(y/h)
        if x == 0 and y == 0 and t == t_init:
            print('------------------------------------')
        # show current location
        print(x,'\t',y,'\t',x_tot,'\t',y_tot,'\t',t)
        # if the slope is shallow enough to hit a right-edge
        if m < (h-y)/(w-x):
            y_tot += m*(w-x)
            y += m*(w-x)
            x_tot += w-x
            x = w
            t = right(t)
        else:
            x_tot += (h-y)/m
            x += (h-y)/m
            y_tot += h-y
            y = h
            t = up(t)

    print(x,'\t',y,'\t',x_tot,'\t',y_tot,'\t',t)
    x = w*frac(x/w)
    y = h*frac(y/h)

    # check if we're at a lower-left corner
    if x == 0 and y == 0:

        # in general, up(right(t)) and right(up(t)) are not the same
        # but they should both be a tile w a LL cone pt or a tile w/o a LL cone pt
        # check with this code below
        print()
        print(up(right(t)),'\t',right(up(t)))
        print(int(up(right(t))) in cone_pts,'\t',int(right(up(t))) in cone_pts)

        if int(up(right(t))) in cone_pts:
            break
    else:
        print(False)
\end{minted}

\subsection{Histogram generator (MATLAB)}
\begin{minted}[breaklines]{matlab}
clc
clear
close all

%%
r = 5000;

slopes = [];

for n=5:5:r
    disp(n)
    for k = 1:n-1
        slopes = [slopes,k./[n,n-2,n-3]];
    end
    slopes = unique(slopes);
end

slopes = unique([0,slopes,1]);

%%
gaps = diff(slopes);
gaps = sort(gaps*r*r);

close
h = histogram(gaps,1e7,'Normalization','pdf','EdgeAlpha',0);
xlim([0,20])

hold on

t_mesh = linspace(0,20,1000);
s_mesh = 0*t_mesh;
i = 1;
for t = t_mesh
    s_mesh(i) = distr(t);
    i = i + 1;
end
plot(t_mesh,s_mesh,'k')

%%

function density = distr(t)
    if t > 16
        density = 9*log(t)-4*log(2)-log(3)-8*atanh(sqrt(1-4/t)) ...
            -8*atanh(sqrt(1-8/t))-2*atanh(sqrt(1-12/t));
    elseif t > 12
        density = 9*log(t)-4*log(2)-8*atanh(sqrt(1-4/t)) ...
            -8*atanh(sqrt(1-8/t))-4*atanh(sqrt(1-12/t));
    elseif t > 32/3
        density = 10*log(t)-4*log(2)-log(3)-8*atanh(sqrt(1-4/t)) ...
            -8*atanh(sqrt(1-8/t))-4*atanh(sqrt(1-32/3/t));
    elseif t > 9
        density = 10*log(t)-4*log(2)-log(3)-8*atanh(sqrt(1-4/t)) ...
            -8*atanh(sqrt(1-8/t));
    elseif t > 8
        density = 10*log(t)-2*log(2)-log(3)-8*atanh(sqrt(1-4/t)) ...
            -12*atanh(sqrt(1-8/t));
    elseif t > 6
        density = 12*log(t)-8*log(2)+log(3)-8*atanh(sqrt(1-4/t)) ...
            -4*atanh(sqrt(1-6/t));
    elseif t > 16/3
        density = 16*log(t)-10*log(2)-3*log(3)-8*atanh(sqrt(1-4/t)) ...
            -4*atanh(sqrt(1-16/3/t));
    elseif t > 4
        density = 16*log(t)-10*log(2)-3*log(3)-8*atanh(sqrt(1-4/t));
    elseif t > 3
        density = 15*log(t)-8*log(2)-3*log(3);
    elseif t > 2
        density = 12*log(t)-8*log(2);
    elseif t > 1
        density = 4*log(t);
    else
        density = 0;
    end

    density = density*8/33;

    if t > 0
        density = density/t/t;
    end
end
\end{minted}

\subsection{Total pdf (Mathematica)}
\begin{minted}[breaklines]{mathematica}
Subscript[F, 1][t] = 
 Piecewise[{{(3/t^2)*Log[t/2], 
    Inequality[2, Less, t, LessEqual, 4]}, 
       {(1/t^2)*(Log[t^4/32] + Log[1 - Sqrt[1 - 4/t]]), 
    Inequality[4, Less, t, LessEqual, 6]}, 
       {(1/t^2)*(Log[(3*t^3)/16] + Log[1 - Sqrt[1 - 4/t]] - 
       Log[1 + Sqrt[1 - 6/t]]), 
         
    Inequality[6, Less, t, LessEqual, 
     8]}, {(1/t^2)*(Log[t^3/8] + Log[1 - Sqrt[1 - 8/t]] + 
       Log[1 - Sqrt[1 - 4/t]] - 
              4*ArcTanh[Sqrt[1 - 8/t]]), 
    Inequality[8, Less, t, LessEqual, 9]}, 
       {(1/t^2)*(Log[t^3/16] + Log[1 - Sqrt[1 - 8/t]] + 
       Log[1 - Sqrt[1 - 4/t]] - 2*ArcTanh[Sqrt[1 - 8/t]]), t > 9}}, 0]

Subscript[F, 2][t] = 
 Piecewise[{{(2/t^2)*Log[t/2], 
    Inequality[2, Less, t, LessEqual, 3]}, 
       {(1/t^2)*Log[t^3/12], Inequality[3, Less, t, LessEqual, 16/3]}, 
       {(1/t^2)*(Log[t^3/12] - 2*ArcTanh[Sqrt[1 - 16/(3*t)]]), 
    Inequality[16/3, Less, t, LessEqual, 6]}, 
       {(1/t^2)*(Log[t^2/4] + Log[1 - Sqrt[1 - 6/t]]), 
    Inequality[6, Less, t, LessEqual, 8]}, 
       {(1/t^2)*(Log[t^2/8] + Log[1 - Sqrt[1 - 8/t]] - 
       2*ArcTanh[Sqrt[1 - 8/t]]), t > 8}}, 0]

Subscript[F, 3][t] = 
 Piecewise[{{(1/t^2)*Log[t], Inequality[1, LessEqual, t, Less, 4]}, 
       {(1/t^2)*Log[t] - (2/t^2)*ArcTanh[Sqrt[1 - 4/t]], 4 <= t}}]

Subscript[F, 4][t] = 
 Piecewise[{{(3/t^2)*Log[t], Inequality[1, Less, t, LessEqual, 2]}, 
       {(3/t^2)*Log[t^2/2], 
    Inequality[2, Less, t, LessEqual, 3]}, {(1/t^2)*Log[t^8/72], 
         
    Inequality[3, Less, t, LessEqual, 
     4]}, {(1/t^2)*(Log[t^8/72] - Log[1 + Sqrt[1 - 4/t]] - 
              4*ArcTanh[Sqrt[1 - 4/t]]), 
    Inequality[4, Less, t, LessEqual, 16/3]}, 
       {(1/t^2)*(Log[t^8/72] - Log[1 + Sqrt[1 - 4/t]] - 
       4*ArcTanh[Sqrt[1 - 4/t]] - 2*ArcTanh[Sqrt[1 - 16/(3*t)]]), 
         Inequality[16/3, Less, t, LessEqual, 6]}, 
       {(1/t^2)*(Log[t^6/4] - Log[1 + Sqrt[1 - 4/t]] - 
       4*ArcTanh[Sqrt[1 - 4/t]] - 2*ArcTanh[Sqrt[1 - 6/t]]), 
         
    Inequality[6, Less, t, LessEqual, 
     8]}, {(1/t^2)*(Log[(16*t^4)/3] - Log[1 + Sqrt[1 - 4/t]] - 
              2*Log[1 + Sqrt[1 - 8/t]] - 4*ArcTanh[Sqrt[1 - 4/t]] - 
       2*ArcTanh[Sqrt[1 - 8/t]]), 
         
    Inequality[8, Less, t, LessEqual, 
     9]}, {(1/t^2)*(Log[(8*t^4)/3] - Log[1 + Sqrt[1 - 4/t]] - 
              2*Log[1 + Sqrt[1 - 8/t]] - 4*ArcTanh[Sqrt[1 - 4/t]]), 
    Inequality[9, Less, t, LessEqual, 32/3]}, 
       {(1/t^2)*(Log[(8*t^4)/3] - Log[1 + Sqrt[1 - 4/t]] - 
       2*Log[1 + Sqrt[1 - 8/t]] - 4*ArcTanh[Sqrt[1 - 4/t]] - 
              4*ArcTanh[Sqrt[1 - 32/(3*t)]]), 
    Inequality[32/3, Less, t, LessEqual, 12]}, 
       {(1/t^2)*(Log[8*t^3] - Log[1 + Sqrt[1 - 4/t]] - 
       2*Log[1 + Sqrt[1 - 8/t]] - 4*ArcTanh[Sqrt[1 - 4/t]] - 
              4*ArcTanh[Sqrt[1 - 12/t]]), 
    Inequality[12, Less, t, LessEqual, 16]}, 
       {(1/t^2)*(Log[(8*t^3)/3] - Log[1 + Sqrt[1 - 4/t]] - 
       2*Log[1 + Sqrt[1 - 8/t]] - 4*ArcTanh[Sqrt[1 - 4/t]] - 
              2*ArcTanh[Sqrt[1 - 12/t]]), t > 16}}, 0]

F[t] = FullSimplify[
  PiecewiseExpand[
   Subscript[F, 1][t] + Subscript[F, 2][t] + Subscript[F, 3][t] + 
         Subscript[F, 4][t]], {t > 0}, TimeConstraint -> 0.5]
\end{minted}

\subsection{Return-time integral script (Mathematica)}
\begin{minted}[breaklines]{mathematica}
R[x_, y_] = y/(a*(a*x + b*y))

(* Poincaré section component 1 *)

A1 = Integrate[
  Integrate[R[1, 2], {b, (2 - 3*a)/4, (1 - a)/2}], {a, 0, 1}]

A2 = Integrate[
   Integrate[R[3/2, 2], {b, (2 - 5*a)/4, (2 - 3*a)/4}], {a, 0, 2/3}] + 
     Integrate[
   Integrate[R[3/2, 2], {b, (2 - 7*a)/8, (2 - 3*a)/4}], {a, 2/3, 1}]

A3 = Integrate[
  Integrate[R[7/2, 4], {b, (2 - 5*a)/4, (2 - 7*a)/8}], {a, 2/3, 1}]

A4 = Integrate[
  Integrate[R[5/2, 2], {b, (2 - 7 a)/4, (2 - 5 a)/4}], {a, 0, 1}]

(* Poincaré section component 2 *)

B1 = Integrate[
   Integrate[R[1, 2], {b, (1 - 2*a)/2, (1 - a)/2}], {a, 0, 1/2}] + 
     Integrate[
   Integrate[R[1, 2], {b, (1 - 2*a)/3, (1 - a)/2}], {a, 1/2, 1}]

B2 = Integrate[
  Integrate[R[2, 3], {b, (1 - 2*a)/2, (1 - 2*a)/3}], {a, 1/2, 1}]

B3 = Integrate[
  Integrate[R[2, 2], {b, (1 - 3*a)/2, (1 - 2*a)/2}], {a, 0, 1}]

(* Poincaré section component 3 *)

C1 = Integrate[Integrate[R[1, 1], {b, 1 - 2*a, 1 - a}], {a, 0, 1}]

(* Poincaré section component 4 *)

D1 = Integrate[Integrate[R[1, 1], {b, 1 - 4*a, 1 - a}], {a, 0, 1/4}] + 
     Integrate[
   Integrate[R[1, 1], {b, (1 - 4*a)/3, 1 - a}], {a, 1/4, 1}]

D2 = Integrate[
   Integrate[R[4, 3], {b, (1 - 4*a)/2, (1 - 4*a)/3}], {a, 1/4, 1/2}] + 
     Integrate[
   Integrate[R[4, 3], {b, (1 - 6*a)/4, (1 - 4*a)/3}], {a, 1/2, 1}]

D3 = Integrate[
  Integrate[R[6, 4], {b, (1 - 5*a)/3, (1 - 6*a)/4}], {a, 1/2, 1}]

D4 = Integrate[
  Integrate[R[5, 3], {b, (1 - 4*a)/2, (1 - 5*a)/3}], {a, 1/2, 1}]

D5 = Integrate[
   Integrate[R[4, 2], {b, 1 - 4*a, (1 - 4*a)/2}], {a, 1/4, 1/3}] + 
     Integrate[
   Integrate[R[4, 2], {b, (1 - 5*a)/2, (1 - 4*a)/2}], {a, 1/3, 1}]

D6 = Integrate[
   Integrate[R[5, 2], {b, 1 - 4*a, (1 - 5*a)/2}], {a, 1/3, 1/2}] + 
     Integrate[
   Integrate[R[5, 2], {b, (1 - 6*a)/2, (1 - 5*a)/2}], {a, 1/2, 1}]

D7 = Integrate[
  Integrate[R[6, 2], {b, 1 - 4*a, (1 - 6*a)/2}], {a, 1/2, 1}]

D8 = Integrate[Integrate[R[4, 1], {b, 1 - 5*a, 1 - 4*a}], {a, 0, 1}]

D9 = Integrate[Integrate[R[5, 1], {b, 1 - 6*a, 1 - 5*a}], {a, 0, 1}]

(* Total integral *)

FullSimplify[
 Expand[(1/Pi^2)*((A1 + A2 + A3 + A4) + (B1 + B2) + 
     C1 + (D1 + D2 + D3 + D4 + D5 + D6 + D7 + D8 + D9))]]

N[(1/Pi^2)*((A1 + A2 + A3 + A4) + (B1 + B2 + B3) + 
    C1 + (D1 + D2 + D3 + D4 + D5 + D6 + D7 + D8 + D9))]
\end{minted}


 \bibliographystyle{plain}

\bibliography{origami_slope_gaps_and_hall}

\end{document}